\tikzstyle{mybox} = [draw=black, very thick, rectangle, rounded corners, inner ysep=5pt, inner xsep=5pt]
 \numberwithin{equation}{section}
 \newtheorem{theorem}{Theorem}[section]
 \newtheorem{lemma}[theorem]{Lemma}
 \newtheorem{definition}[theorem]{Definition}
 \theoremstyle{definition}
\newcommand{\esssup}{\operatornamewithlimits{ess\, sup}}
\newcommand{\essinf}{\operatornamewithlimits{ess\,inf}}
\newcommand{\essosc}{\operatornamewithlimits{ess\, osc}}
\def\R{\mathbb{R}}
\def\d{\mathrm{d}}
\def\e{\epsilon}
\title{Qualitative properties of solutions to parabolic anisotropic equations: Part I - Expansion of positivity}
\author{Simone Ciani {\&} Eurica Henriques {\&} Mariia Savchenko  {\&} Igor I. Skrypnik }
\begin{document}

\begin{abstract} 
We prove expansion of positivity and reduction of the oscillation results to the local weak solutions to a doubly nonlinear anisotropic class of parabolic differential equations with bounded and measurable coefficients, whose prototype is  
\begin{equation*} 
u_t-\sum\limits_{i=1}^N \left( u^{(m_i-1)(p_i-1)} \ |u_{x_i}|^{p_i-2} \  u_{x_i}  \right)_{x_i}=0 ,
\end{equation*} 

\noindent for a restricted range of $p_i$s and $m_i$s, that reflects their competition for the diffusion. The positivity expansion relies on an exponential shift and is presented separately for singular and degenerate cases. Finally we present a study of the local oscillation of the solution for some specific ranges of exponents, within the singular and degenerate cases.

\vspace{0.2cm}

\noindent
{\bf{MSC 2020:}} 35B45, 35B65, 35K55, 35K65, 35K67.

\vspace{0.1cm}

\noindent
{\bf{Key Words}}: Anisotropic parabolic equations, Singular and Degenerate equations, Expansion of positivity; Reduction of the oscillation 

\end{abstract}

\maketitle
	
    \begin{center}
		\begin{minipage}{9cm}
			\small
  \tableofcontents
		 \end{minipage}
	 \end{center}


\addtocontents{toc}{\protect\setcounter{tocdepth}{1}}

\section{Introduction }\label{Sec.1}

\noindent Anisotropic differential equations arise in the study of physical and geometric phenomena where properties vary with direction: this type of differential equations capture directional dependencies in media such as crystals, layered materials, or biological tissues for that they are crucial in fields such as image processing, material science, and fluid dynamics. In the existing literature, which is already extensive, various anisotropic parabolic equations have been considered, depending on whether they relate to the porous medium equation or the parabolic $p$-Laplacean equation. So for anisotropic porous medium equations, the most common evolution equations in the literature associated with fluid flow in an anisotropic medium are
\begin{equation*}
u_t = \sum_{i=1}^n \left( u^{m_i-1} \ u_{x_i}\right)_{x_i} \ , \quad m_i>0, \ \ \forall i=1, \cdots, n
\end{equation*}
and
\begin{equation*}
u_t = \mathrm{div} \left( u^{\gamma(x,t)} Du \right), \quad \mbox{where $\gamma$ is a bounded function.}
\end{equation*}
While for anisotropic equations associated (in some sense) with the p-Laplacian,  very likely due to the mathematical interest and range of applications, there are more records of anisotropic equations, among which we highlight:
\begin{equation*}
u_t = \sum_{i=1}^n \left( |u_{x_i}|^{p_i-2} \ u_{x_i}\right)_{x_i} \ , \quad p_i>1, \ \ \forall i=1,\cdots, n \ ;
\end{equation*}
\begin{equation*}
u_t = \mathrm{div} \left( |Du|^{p(x,t)-2} \ Du \right) \ , \quad \mbox{where $p(x,t)$ is a bounded and continuous function}
\end{equation*}
and also the double-phase equation (also known as the $(p,q)$-growth equation)
\begin{equation*}
u_t =\mathrm{div} \left( |Du|^{p-2} \ Du + a(x,t) |Du|^{q-2} \ Du\right) , \quad 1<p<q \quad \mbox{and $a$ a nonnegative continuous function.}
\end{equation*}

\noindent In \cite{Iwonabook}, several results related to the existence of solutions for differential equations in anisotropic Musielak-Orlicz spaces (as well as some applications) can be found. Also, in \cite{AS-book2}, \cite{AS05}, \cite{Songexist01} and \cite{Songuniq01}, one can find results on the existence and uniqueness of solutions. Regarding qualitative (and quantitative) properties of the solutions, in addition to the aforementioned works, we may also consider the (non-exhaustive) list of other contributions \cite{AMS}, \cite{AS2}, \cite{BBLV23}, \cite{Buryachenko}, \cite{Cianchi}, \cite{CG}, \cite{CH25}, \cite{CHS24}, \cite{CHS25}, \cite{CMV}, \cite{CVV}, \cite{Tedeev-CP}, \cite{DMV}, \cite{Vaz2}, \cite{Vaz1}, \cite{EH21}, \cite{EH11}, \cite{EH08}, \cite{HU06}, \cite{Ok20}, \cite{LY}, \cite{Marcellini}, \cite{SongJian06}, \cite{SongJian05}, as well as the references contained therein.

\vspace{.2cm}

\noindent In this paper we are concerned with general doubly nonlinear anisotropic parabolic equations not only for the growing interest on them and their numerous applications but also for their inherent mathematical challenges and properties. The prototype equation related to this double anisotropic setting is given by 
\[ u_t-\sum\limits_{i=1}^N \left( u^{(m_i-1)(p_i-1)} \ |u_{x_i}|^{p_i-2} \  u_{x_i}  \right)_{x_i}=0 , \quad \forall \  m_i>0, \ \ p_i>1\]
and defined over the space-time domain $\Omega_T:=\Omega \times (0, T)$, where  $\Omega$ is a bounded domain in $\mathbb{R}^N$ and $T$ a real positive number. We will study local regularity properties of the non-negative solutions to the equation
\begin{equation}\label{eq1.1}
u_t-\sum\limits_{i=1}^N \left(a_i(x, t, u, D u) \right)_{x_i}=0,\quad (x,t)\in \Omega_T,
\end{equation} 
where the functions $a_i(x, t, u, Du)$, $i=1, ..., N$, satisfy the Caratheodory conditions and the following structure conditions for almost every $(x,t) \in \Omega_T$,
\begin{equation}\label{eq1.2}
\begin{cases}
 \sum\limits_{i=1}^N a_i(x, t, u, D u) \ (u^{m^-})_{x_i}  \geqslant K_1  \sum\limits_{i=1}^N u^{(m_i-m^-)(p_i-1)} |(u^{m^-})_{x_i}|^{p_i}, \quad 
m^{-}:=\min( m_1, ..., m_N)\,,\\[.8em]
|a_i(x, t, u, D u)|\leqslant K_2 \;  u^{\frac{(m_i-m^-)(p_i-1)}{p_i}} \left(\sum\limits_{j=1}^N u^{(m_j-m^-)(p_j-1)}|(u^{m^-})_{x_j}|^{p_j}\right)^{1-\frac{1}{p_i}}\,
\end{cases}
\end{equation}
being $K_1$, $K_2$ fixed positive constants, $Du=(u_{x_1}, \cdots, u_{x_N})$ the weak gradient, and all the (anisotropic) exponents satisfy $p_i >1, m_i >0$.
\vskip0.2cm \noindent 

\noindent The approaches adopted by the above mentioned authors rely on a specific notion of solution and, if not for any other reason but that one, it brings the subject into different lights. In this work we study \eqref{eq1.1}-\eqref{eq1.2} under the following definition of weak solution.
\begin{definition} \label{defweaksol}
We say that a non-negative function $u$ satisfying\footnote{See Section \ref{sec:functional-setting} for more details on the latter space.}
\[ u\in C_{loc}(0,T; L^{1+m^-}_{loc}(\Omega)), \quad  u^{m^-}\in L^{\vec{p}}_{loc}(0, T; W^{1, \vec{p}}_{loc}(\Omega)), \]
 and for $i=1,..., N$,
\[u^{\frac{m_i(p_i-1)+m^{-}}{p_i}}, \left(u^{\frac{m_i(p_i-1)+m^{-}}{p_i}} \right)_{x_i} \in L^{p_i}_{loc}(0,T; L^{p_i}_{loc}(\Omega)),\quad  \]
 is a local weak sub(super)-solution to \eqref{eq1.1} if the following integral inequalities hold
\begin{equation}\label{eq1.3}
\int\limits_{K} u\,\varphi \,dx \Big|^{t_2}_{t_1} +\int\limits^{t_2}_{t_1}\int\limits_{K}\Big[-u\,\varphi_t+\sum\limits_{i=1}^N a_i(x, t, u, Du)\,\varphi_{x_i}\Big]\,dx\,dt\leqslant (\geqslant) \,0,
\end{equation}
for any compact $K\subset \Omega$, any interval $[t_1, t_2]\subset (0, T)$ and any non-negative $\varphi$, such that \[\varphi\in  W^{1,1+m^-}_{loc}(0,T; L^{1+m^-}(K)) \cap L^{\vec{p}}(0,T, W^{1,\vec{p}}_0(K)),\quad \vec{p}=(p_1, ..., p_N). \]
\noindent We say that $u$ is a non-negative, local weak solution to equation \eqref{eq1.1} if $u$ is both a non-negative, local weak sub- and super-solution.
\end{definition}

\noindent Observe that the regularity assumptions on $u$ assure that all the integrals appearing in \eqref{eq1.3} are well defined. In particular these regularity assumptions will allow us to consider test functions $\varphi=(u^{m^-}- k^{m^-})_{\pm} \xi^{p_+}$ and  guarantee that all the integrals appearing in the energy estimates \eqref{eq2.3} are finite as well as the integral evolving space derivatives related to the anisotropic embedding \eqref{embedding}. 

\vspace{.2cm}

\noindent The choice of $m^-:=\min( m_1, ..., m_N)$ allows us not only to recover what was presented in previous works regarding isotropic settings (see for instances \cite{BHSS21}) but also to obtain better estimates in the framework of local boundedness. Note that another possibility was to consider $m^-:=\min(1, m_1, ..., m_N)$, and in such case one recovers, for the isotropic setting, the setting and results presented in \cite{DBGV-mono}.

\vspace{.2cm}

\noindent Above all, the objective of this work is to make a consistent and precise contribution to the study of anisotropic differential equations, particularly with regard to results leading to local continuity.  In section~\ref{Sec.4}, results on the expansion of
positivity of $u$ are proved under the umbrella of an exponential shift (the isotropic siblings can be found in \cite{DBGV-mono} and also in \cite{EH22} for the doubly nonlinear setting). Nonetheless, there is a price to be paid: the exponents $\lambda_i= m_i(p_i-1)$ must be kept close, that is,  $\displaystyle{\max_{i} \lambda_i-\min_{i} \lambda_i \leqslant \epsilon_{\star}}$, for some small positive $\epsilon_{\star}$ (determined along the proofs and) depending only on the data. In section~\ref{Sec.6}, based on the previous results regarding the expansion of positivity and studying an alternative argument considered within an intrinsic geometry dictated by  \eqref{eq1.1}, the reduction of the oscillation of the nonnegative, locally bounded, local weak solutions to \eqref{eq1.1}-\eqref{eq1.2}, for specific ranges of $\lambda_i$ and $p_i$, is derived both for the fast diffusion range and the slow diffusion range.  

\subsection{The Result} For numbers $\{p_i\}_{i=1}^N$ and $\{m_i\}_{i=1}^N$ related to the degeneracy of the equation \eqref{eq1.1}, let us define for $i=1, \cdots, N$ the numbers
\[\lambda_-:=\min\limits_{1\leqslant i\leqslant N}\lambda_i,\quad \lambda_+:=\max\limits_{1\leqslant i\leqslant N}\lambda_i,\quad \lambda_i:=m_i(p_i-1)\, .  \] Number $\lambda_{+}$ discriminates the {\it singular} or {\it degenerate} behavior of the equation\footnote{The case $\lambda_+=1$ is a special one, reminiscent of the Trudinger's equation -since the parabolic and elliptic  energy terms get balanced - and will be treated in a forthcoming work.}, when respectively $\lambda_+ >1$ or $\lambda_+<1$. Our main results, Theorem \ref{th4.1} and Theorem \ref{th5.1}, state that, for non-negative local weak super-solutions to \eqref{eq1.1}-\eqref{eq1.2}, there exists a number $\epsilon^*\in (0,1)$ determined a priori only on the data, such that, if $\lambda_+-\lambda_{-}<\epsilon^*$ and the measure information
\[|\mathcal{B}_r(y)\cap [u(\cdot, s)\geqslant k]|\geqslant \alpha_0 |\mathcal{B}_r(y)|,\qquad \mathrm{for} \ \ \mathrm{some} \ \ \alpha_0 \in (0,1)\]
is available at some time level $s$ within anisotropic sets $\mathcal{B}_r$ whose geometry depends on $k$, for constants $k,r, \alpha_0$, then there exist constants $\epsilon_{*}\in (0,1)$, $C_{*}>1$ and $\delta_1, \delta_2\in (0,1)$, depending only on the data and $\alpha_0$, such that
\[u(x, t)\geqslant \frac{k}{C_{*}},\quad x\in \mathcal{B}_{2r}(y),\]
for all $t$ in a time range $s+ \delta_1 \,r^{p_{+}}\,\,k^{1-\lambda_{+}} \leqslant t\leqslant s+ \delta_2 \,r^{p_{+}}\,\,k^{1-\lambda_{+}}$ that is different from degenerate to singular case. See Sections \ref{Sec.4}-\ref{Sec.5} for the precise statements. The phenomenon of expansion of positivity lies at the heart of several other results towards regularity, such as results on the reduction of the oscillation and any form of Harnack inequality, and therefore, of H\"older estimates. In the last section of this work, we show its application to the control of the local oscillation of solutions.

\subsection{The Method} Our method is reminiscent of the elliptic technique of \cite{DiBe-Remarks} where, roughly speaking, an expansion of positivity is found for competing exponents in an intrinsic geometry whose magnitude is chosen accordingly to the degeneration of the equation. In this work we adapt the aforementioned strategy to the parabolic doubly nonlinear case, through a careful adaptation of the so-called parabolic technique of the exponential-shift, devised in \cite{Acta}. The simple idea underlying this technique is that, when looking at the equation by the magnifying glass provided by an exponential transformation, the degeneracy (fast or slow propagation) caused by a time factor $t^{-\bar{\lambda}}$ gets reduced and the elliptic terms dominate the diffusion, allowing us to perform De Giorgi's level set machinery \cite{DG}. Our approach has the advantage that can be used for operators with full-quasilinear structure (see \eqref{eq1.2}), and therefore it brings to light the structural local behavior of solutions to equations that do not, necessarily, enjoy a comparison principle. On the other hand, the disadvantage of our approach is that the exponential-shift encumbers so much the nonlinear anisotropy, that the control on the gap $p_{\text{max}}-p_{\text{min}}$ allowed for the expansion of positivity is fixed, but only qualitative.

\section{Notation, functional setting and auxiliary results}\label{Sec.2}

\subsection{Notation} 

\noindent Along the text we will be considering several exponents related to the anisotropic ones $m_i$ and $p_i$, for $i=1, \cdots, N$, and some anisotropic domains. More precisely, we will consider 
\begin{itemize}
    \item Exponents 

    \[\lambda_-:=\min\limits_{1\leqslant i\leqslant N}\lambda_i,\quad \lambda_+:=\max\limits_{1\leqslant i\leqslant N}\lambda_i,\quad \lambda_i:=m_i(p_i-1),\quad i=1, \cdots, N \]

    \[ p_{+} = \max\limits_{1\leqslant i\leqslant N} p_i , \quad p_{-} = \min\limits_{1\leqslant i\leqslant N} p_i\]
    \[\frac{1}{p}= \frac{1}{N} \sum_{i=1}^N \frac{1}{p_i} , \quad p <N \qquad \mbox{and} \qquad 
    \alpha= \sum_{i=1}^N \frac{\alpha_i}{N} , \quad \alpha_i>0\,,\]
    \[ p^\star= \frac{Np}{N-p} \qquad \mbox{and} \qquad p^\star_{\alpha}= p^\star \ \alpha\,.\]

\item Cubes and cylinders 
\[ K_{\vec{r}}(x_0):=\{x: |x_i-x_{i,0}|<r_i, \ \ i=1, ...,N\} \]

\[ Q^{+}_{\vec{r}, \eta}(x_0, t_0):=K_{\vec{r}}(x_0)\times (t_0, t_0+\eta),\quad Q^-_{\vec{r}, \eta}(x_0, t_0):=K_{\vec{r}}(x_0)\times(t_0-\eta, t_0), \]
being 
\[Q_{\vec{r}, \eta}(x_0, t_0):=Q^{+}_{\vec{r}, \eta}(x_0, t_0)\cup Q^{-}_{\vec{r}, \eta}(x_0, t_0) \]
and $\vec{r}:=(r_1, ..., r_N)$, for $r_1,\cdots, r_N, \eta$ real positive fixed numbers.

\item {\it Intrinsic} anisotropic cubes

\[ K^k_{r}(y):=\left\{x:|x_i-y_i|<\frac{r^{\frac{p_+}{p_i}}}{k^{\frac{\lambda_+-\lambda_i}{p_i}}},\ \ i=1, ..., N\right\} , \qquad \mbox{for} \ r,k>0 \ . \]

\end{itemize}

\vspace{.3cm}

\noindent We will consider as structural data the parameters $N$, $p_1$, ..., $p_N$, $m_1$, ..., $m_N$, $K_1$, $K_2$, and we will consider constants $\gamma$ (which may change from line to line) that are  quantitatively determined a priory in terms of the above quantities. 

\subsection{Functional anisotropic setting} \label{sec:functional-setting}

\noindent We define the anisotropic spaces of locally integrable functions as
\[W^{1,\vec{p}}_{loc}(\Omega)= \{u \in W^{1,1}_{loc}(\Omega)\, |\, \,  u_{x_i} \in L^{p_i}_{loc}(\Omega) \},\]
\[L^{\vec{p}}_{loc}(0,T; W^{1, \vec{p}}_{loc}(\Omega))= \{u \in L^1_{loc}(0,T; W^{1,1}_{loc}(\Omega)\, |\, \, u_{x_i} \in L_{loc}^{p_i}(0,T; L^{p_i}_{loc} (\Omega))\},\]
and the respective spaces of functions with zero boundary data
\[W^{1,\vec{p}}_{o}(\Omega)= \{u \in W^{1,1}_{o}(\Omega)\, |\, \, u_{x_i} \in L^{p_i}_{loc}(\Omega) \},\]
\[L^{\vec{p}}_{loc}(0,T; W^{1, \vec{p}}_{o}(\Omega))= \{u \in L^1_{loc}(0,T; W^{1,1}_{o}(\Omega)\, |\, \,  u_{x_i} \in L_{loc}^{p_i}(0,T; L^{p_i}_{loc} (\Omega))\}.\]
\noindent It is known (see  \cite{IlinBesovNikolski}, \cite{russian1}) that when $p>N$ the embedding $W^{1,\vec{p}}(\Omega) \hookrightarrow C^{0,\alpha}_{loc}(\Omega)$ for $\Omega$ regular enough. Therefore in this work we will consider $p<N$.

\vspace{.2cm}

\noindent We will use Lemma \ref{embedding} that provides a Sobolev embedding within the setting of anisotropic parabolic PDEs \cite{DMV}
\begin{lemma} \label{embedding} 
Let $\Omega\subseteq \mathbb{R}^N$ be a rectangular domain, $p<N$, $\alpha_i>0$, for all $i=1, \cdots, N$, and $\sigma\in [1, p^{*}_{\alpha}]$. For any number $\theta\in [0,\,  p/  p^{*}]$ define 
\[
\delta=\delta(\theta, {\bf p}, \alpha)=\theta \,  p^{*}_{\alpha}+\sigma\, (1-\theta),
\]
Then there exists a positive constant $c=c(N, {\bf p}, \alpha, \theta, \sigma)>0$ such that
\begin{equation}
\label{PS}
\iint_{\Omega_{T}}|\varphi|^{\delta}\, dx\, dt\leq c\, T^{1-\theta\, \frac{ p^{*}}{ p}}\left(\sup_{t\in (0, T]}\int_{\Omega}|\varphi|^{\sigma}(x, t)\, dx\right)^{1-\theta}\prod_i \left(\iint_{\Omega_{T}}| (\varphi^{\alpha_i})_{x_i}|^{p_{i}}\, dx\, dt\right)^{\frac{\theta\,   p^{*}}{N \, p_{i}}},
\end{equation} 
for any $\varphi\in L^{1}(0, T; W^{1,1}_{o}(\Omega))$, being the inequality trivial when the right-hand side is unbounded.
\end{lemma}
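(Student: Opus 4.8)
\emph{Proof proposal.} The plan is to combine a purely spatial anisotropic Sobolev inequality, applied slice by slice in the time variable, with an elementary $L^q(\Omega)$-interpolation and a final application of Hölder's inequality in time. The borderline case $\theta=0$ (where \eqref{PS} is just $\iint_{\Omega_T}|\varphi|^{\sigma}\le T\sup_t\int_{\Omega}|\varphi|^{\sigma}$) and the case of an unbounded right-hand side are dealt with directly, so we may assume $\theta\in(0,p/p^{*}]\subset(0,1)$ and that every factor on the right is finite.

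First I would invoke the anisotropic Gagliardo--Nirenberg--Sobolev inequality in its form with \emph{direction-dependent} powers: for $p<N$, $\alpha_i>0$, a rectangular $\Omega$, and a.e.\ $t\in(0,T]$,
\[
\int_{\Omega}|\varphi(x,t)|^{p^{*}_{\alpha}}\,dx\leq c(N,{\bf p},\alpha)\,\prod_{i=1}^{N}\left(\int_{\Omega}|(\varphi^{\alpha_i})_{x_i}(x,t)|^{p_i}\,dx\right)^{\frac{p^{*}}{N p_i}},
\]
which follows, as in \cite{DMV}, \cite{russian1}, from the one-dimensional bound $\sup_{x_i}|\varphi|^{\alpha_i}\le\int_{\R}|(\varphi^{\alpha_i})_{x_i}|\,dx_i$ (valid since $\varphi(\cdot,t)$ has zero trace on $\partial\Omega$), followed by the iterated Hölder (Gagliardo) argument and the standard upgrade from $L^1$- to $L^{p_i}$-norms of the partial derivatives; the weights $p^{*}/(Np_i)$ are precisely those making the product homogeneous of the right order to reach the target exponent $p^{*}_{\alpha}=p^{*}\alpha=\frac{p}{N-p}\sum_i\alpha_i$.

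Second, since $1\le\sigma\le p^{*}_{\alpha}$ and $\delta=\theta\,p^{*}_{\alpha}+(1-\theta)\sigma$ is a convex combination of $p^{*}_{\alpha}$ and $\sigma$ with $\theta\in(0,1)$, writing $|\varphi|^{\delta}=|\varphi|^{\theta p^{*}_{\alpha}}\,|\varphi|^{(1-\theta)\sigma}$ and applying Hölder in $x$ with exponents $1/\theta$ and $1/(1-\theta)$ gives, for a.e.\ $t$,
\[
\int_{\Omega}|\varphi|^{\delta}\,dx\leq\left(\int_{\Omega}|\varphi|^{p^{*}_{\alpha}}\,dx\right)^{\theta}\left(\int_{\Omega}|\varphi|^{\sigma}\,dx\right)^{1-\theta}.
\]
Inserting the spatial embedding into the first factor, estimating $\big(\int_{\Omega}|\varphi|^{\sigma}\,dx\big)^{1-\theta}\le\big(\sup_{t}\int_{\Omega}|\varphi|^{\sigma}\,dx\big)^{1-\theta}$, and integrating over $t\in(0,T)$ leaves
\[
\iint_{\Omega_T}|\varphi|^{\delta}\,dx\,dt\leq c\Big(\sup_{t\in(0,T]}\int_{\Omega}|\varphi|^{\sigma}\,dx\Big)^{1-\theta}\int_{0}^{T}\prod_{i=1}^{N}\left(\int_{\Omega}|(\varphi^{\alpha_i})_{x_i}|^{p_i}\,dx\right)^{\frac{\theta p^{*}}{N p_i}}\,dt.
\]

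Third I would close with Hölder in time. The exponents satisfy $\sum_{i}\frac{\theta p^{*}}{N p_i}=\frac{\theta p^{*}}{N}\cdot\frac{N}{p}=\theta\frac{p^{*}}{p}\le 1$, with equality only at $\theta=p/p^{*}$, so there is room to spare; applying the generalized Hölder inequality (with exponents $Np_i/(\theta p^{*})$ on the $i$-th factor and $1/(1-\theta p^{*}/p)$ on the constant factor $1$, the latter omitted when $\theta=p/p^{*}$) yields
\[
\int_{0}^{T}\prod_{i=1}^{N}\left(\int_{\Omega}|(\varphi^{\alpha_i})_{x_i}|^{p_i}\,dx\right)^{\frac{\theta p^{*}}{N p_i}}\,dt\leq T^{\,1-\theta\frac{p^{*}}{p}}\prod_{i=1}^{N}\left(\iint_{\Omega_T}|(\varphi^{\alpha_i})_{x_i}|^{p_i}\,dx\,dt\right)^{\frac{\theta p^{*}}{N p_i}}.
\]
Combining the last two displays gives \eqref{PS}. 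The only genuinely delicate ingredient is the spatial step: the anisotropic Sobolev embedding with a different power $\alpha_i$ in each coordinate direction (rather than a single global power), together with the bookkeeping that the product of the $L^{p_i}$-gradient norms carries exactly the weight $p^{*}/(Np_i)$ needed to land on $p^{*}_{\alpha}$; the $L^q$-interpolation and the two Hölder applications are then routine.
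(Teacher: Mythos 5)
The paper does not prove this lemma; it simply cites \cite{DMV}, and your three-step decomposition (slice-wise anisotropic Gagliardo--Nirenberg--Sobolev with direction-dependent powers $\alpha_i$, then $L^q$-interpolation in $x$ between $\sigma$ and $p^*_\alpha$, then generalized Hölder in time with the spare factor $T^{1-\theta p^*/p}$) is precisely the argument given there. Your exponent bookkeeping in Steps~2 and~3 is correct ($\sum_i \tfrac{\theta p^*}{Np_i}=\theta\tfrac{p^*}{p}\le 1$, with the constant-$1$ factor carrying $1-\theta p^*/p$), and you are right that the real content lives in the spatial inequality of Step~1, which you only sketch -- the upgrade from the $L^1$-Gagliardo chain to the $L^{p_i}$-norms of $(\varphi^{\alpha_i})_{x_i}$ with genuinely different $\alpha_i$ requires a careful choice of the auxiliary power so the residual $L^q$-norms of $\varphi$ coincide across $i$, but this is exactly what \cite{DMV} carries out, so no gap in substance.
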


\subsection{Auxiliary results}

\vspace{.2cm}

\noindent In this section we present several auxiliary results starting with the so called local clustering lemma \cite{DBGV06}.


\begin{lemma}\label{lem2.2}
Let $K_{r}(y)$ be a cube in $\mathbb{R}^{N}$, of edge $r$ centered at $y$, and let $u\in W^{1,1}(K_{r}(y))$ satisfies
\begin{equation}\label{eq2.1}
||(u-k)_{-}||_{W^{1,1}(K_{r}(y))} \leqslant \mathcal{K}\,k\,r^{N-1},\,\,\,\,\,\,and\,\,\,\,\,\, |\{K_{r}(y) : u\geqslant k \}|\geqslant \beta |K_{r}(y)|,
\end{equation}
with some $\beta \in (0,1)$, $k\in\mathbb{R}$ and $\mathcal{K} >0$. Then for any $\xi \in (0,1)$ and any $\nu\in (0,1)$ there exists $\bar{x} \in K_{r}(y)$ and $\delta=\delta(N) \in (0,1)$ such that
\begin{equation}\label{eq2.2}
|\{K_{\bar{r}}(\bar{x}):  u\geqslant \xi\,k \}| \geqslant (1-\nu) |K_{\bar{r}}(y)|,\,\,\, \bar{r}:=\delta \beta^{2}\frac{(1-\xi)\nu}{\mathcal{K}}\,r.
\end{equation}
\end{lemma}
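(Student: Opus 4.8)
The plan is to partition $K_r(y)$ into a fine uniform grid of subcubes of side $\bar r$ and to show, by contradiction, that on at least one of them the set $[u\geqslant \xi k]$ already fills a fraction $(1-\nu)$ of the cube; the engine is the De Giorgi isoperimetric inequality fed by the $W^{1,1}$ bound in \eqref{eq2.1}.

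\textbf{Step 1 (decomposition and dichotomy).} Subdivide $K_r(y)$ into $\approx (r/\bar r)^N$ congruent disjoint subcubes $Q$ of side $\bar r$ (the rounding is harmless and gets absorbed into $\delta(N)$). Assume, for contradiction, that \eqref{eq2.2} fails on every $Q$, i.e. $|Q\cap[u<\xi k]|>\nu|Q|$ for all of them. Call $Q$ \emph{heavy} if $|Q\cap[u\geqslant k]|\geqslant\tfrac12\beta|Q|$. Since the non-heavy subcubes account for at most $\tfrac12\beta|K_r(y)|$ of the set $[u\geqslant k]$, while by \eqref{eq2.1} the whole cube accounts for at least $\beta|K_r(y)|$, the heavy subcubes must carry at least $\tfrac12\beta|K_r(y)|$ of $[u\geqslant k]$; as each contributes at most its volume $\bar r^N$, there are at least $\tfrac12\beta(r/\bar r)^N$ heavy subcubes.

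\textbf{Step 2 (isoperimetric estimate on a heavy cube).} On a heavy $Q$ one has simultaneously $|Q\cap[u\geqslant k]|\geqslant\tfrac12\beta|Q|$ and, by the contradiction hypothesis, $|Q\cap[u<\xi k]|\geqslant\nu|Q|$. The De Giorgi isoperimetric inequality on $Q$, applied with the two levels $\xi k<k$, gives a constant $\gamma=\gamma(N)$ with
\[
(1-\xi)\,k\;|Q\cap[u\geqslant k]|\;|Q\cap[u<\xi k]|\ \leqslant\ \gamma\,\bar r^{\,N+1}\int_{Q\cap[\xi k<u<k]}|Du|\,dx .
\]
Since $|Du|=|D(u-k)_-|$ on $[\xi k<u<k]$, inserting the two density bounds yields, for every heavy $Q$,
\[
\int_{Q\cap[\xi k<u<k]}|D(u-k)_-|\,dx\ \geqslant\ \frac{(1-\xi)\,\beta\,\nu}{2\gamma}\;k\,\bar r^{\,N-1}.
\]

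\textbf{Step 3 (summation and contradiction).} Summing over the disjoint heavy subcubes (at least $\tfrac12\beta(r/\bar r)^N$ of them) and bounding the left-hand side from above by $\|(u-k)_-\|_{W^{1,1}(K_r(y))}$, one obtains
\[
\mathcal{K}\,k\,r^{N-1}\ \geqslant\ \int_{K_r(y)}|D(u-k)_-|\,dx\ \geqslant\ \frac{\beta^2(1-\xi)\nu}{4\gamma}\;k\,\frac{r^N}{\bar r},
\]
hence $\bar r\geqslant \dfrac{\beta^2(1-\xi)\nu}{4\gamma(N)\,\mathcal{K}}\,r$. Choosing $\delta=\delta(N)$ smaller than $1/(4\gamma(N))$ makes the prescribed radius $\bar r=\delta\,\beta^2\frac{(1-\xi)\nu}{\mathcal{K}}\,r$ violate this inequality — a contradiction. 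Therefore \eqref{eq2.2} must hold on at least one subcube of the grid, and we take $\bar x$ to be its center.

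\textbf{Where the difficulty lies.} There is no deep obstruction here: the only non-elementary input is the De Giorgi isoperimetric lemma on a cube (with a purely dimensional constant), and the remainder is measure bookkeeping. The step that will require care is the heavy/light dichotomy of Step 1 — one has to guarantee that a definite proportion of subcubes \emph{simultaneously} see a fixed share of both $[u\geqslant k]$ and $[u<\xi k]$ — together with tracking the constants so that the final choice of $\delta(N)$ is legitimate; the degenerate situations ($k\leqslant 0$, or $[u\geqslant k]$ of full or null measure) are either vacuous or immediate.
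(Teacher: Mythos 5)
The paper does not prove Lemma~\ref{lem2.2}; it quotes it directly from the cited reference \cite{DBGV06} (the ``local clustering lemma'' of DiBenedetto--Gianazza--Vespri). So there is no in-text proof to compare against, and your argument has to be judged on its own.

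Your proof is correct, and it is in fact essentially the argument of \cite{DBGV06}. The key ingredients all appear and are used correctly: partition $K_r(y)$ into $\approx(r/\bar r)^N$ subcubes of side $\bar r$, argue by contradiction that no subcube has $[u\geq\xi k]$-density at least $1-\nu$, isolate the ``heavy'' subcubes carrying a definite fraction of $[u\geq k]$ by the averaging argument, apply the De Giorgi isoperimetric inequality with levels $\xi k<k$ on each heavy subcube, and sum to contradict the $W^{1,1}$ bound in \eqref{eq2.1}. The constant bookkeeping in Steps~2--3 is consistent: the final inequality $\bar r\geq\beta^2(1-\xi)\nu\,r/(4\gamma\mathcal{K})$ follows, and choosing $\delta(N)<1/(4\gamma(N))$ in the definition of $\bar r$ produces the contradiction. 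Two points you rightly flag as minor but which deserve a sentence in a polished write-up: (i) the rounding when $r/\bar r$ is not an integer must be handled (take $n=\lceil r/\bar r\rceil$ subcubes of side $r/n\leq\bar r$ and absorb the resulting factor $2^N$ into $\delta(N)$), and (ii) the trivial cases $k\leq 0$, $[u\geq k]$ of full measure, etc., are vacuous or immediate. Neither is a gap; the proposal is sound.

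Note for completeness: the conclusion \eqref{eq2.2} in the paper compares $|\{K_{\bar r}(\bar x):u\geq\xi k\}|$ with $|K_{\bar r}(y)|$, but of course $|K_{\bar r}(\bar x)|=|K_{\bar r}(y)|$, so taking $\bar x$ to be the center of the selected subcube, as you do, is exactly what is required.
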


\vspace{.1cm}

\noindent Next result gives an integral estimates which are a key tool in the derivation of regularity results.

\paragraph{Local Energy Estimates}

\begin{lemma}\label{lem2.3}
Let $u$ be a non-negative, local weak sub(super)-solution to equation \eqref{eq1.1} under conditions \eqref{eq1.2}. Then there exists a positive constant $\gamma$, depending only on the data, such that for every cylinder $Q_{\vec{r}, \eta}(y, \tau)\subset \Omega_T$, every $k\in \mathbb{R}_+$, and every piecewise smooth cutoff function $\zeta \in [0,1]$, vanishing on $\partial K_{\vec{r}}(y)$, there holds

$\displaystyle{\sup\limits_{\tau-\eta\leqslant t \leqslant \tau+\eta}\int\limits_{K_{\vec{r}}(y)} g_{\pm}(u^{m^{-}}, k^{m^{-}})\,\zeta^{p_{+}}\,dx +\gamma^{-1} \sum\limits_{i=1}^{n} \iint\limits_{Q_{\vec{r},\eta}(y, \tau)}u^{(m_i-m^{-})(p_i-1)}\left|\left((u^{m^{-}}-k^{m^{-}})_{{\pm}}\right)_{x_i}\right|^{p_{i}}\zeta^{p_{+}}\,dx\,dt}$
\begin{eqnarray}\label{eq2.3}
& \leqslant & \int\limits_{K_{\vec{r}}(y)\times\{\tau-\eta\}}g_{\pm}(u^{m^{-}}, k^{m^{-}}) \,\zeta^{p_{+}}\,dx+\gamma\iint\limits_{Q_{\vec{r},\eta}(y, \tau)}g_{\pm}(u^{m^{-}}, k^{m^{-}}) \,|\zeta_t|\,dx dt \nonumber \\
& & +\gamma \sum\limits_{i=1}^{n}\iint\limits_{Q_{\vec{r},\eta}(y, \tau)}u^{(m_i-m^{-})(p_i-1)}(u^{m^{-}}-k^{m^{-}})_{\pm}^{p_{i}}|\zeta_{x_i}|^{p_{i}}\,dx\,dt
\end{eqnarray}
where \[g_{\pm}(u^{m^{-}}, k^{m^{-}})=  \int\limits_{0}^{(u^{m^{-}}-k^{m^{-}})_{\pm}} s(k^{m^{-}}\pm s)^{\frac{1-m^{-}}{m^{-}}}\,ds\ .\]
\end{lemma}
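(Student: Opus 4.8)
The plan is to derive the energy estimate \eqref{eq2.3} by testing the weak formulation \eqref{eq1.3} with the truncated test function $\varphi=\pm(u^{m^-}-k^{m^-})_\pm\,\zeta^{p_+}$, which belongs to the admissible class thanks to the regularity assumptions recorded in Definition \ref{defweaksol}. Since such a $\varphi$ is not directly admissible in time (it lacks a time derivative in the strong sense), I would first regularize in time --- via a Steklov average or a mollification in $t$ of $u$ --- carry out the computation for the mollified equation, and pass to the limit at the end; this is the standard device and I will only indicate it, not belabor it. The two structural inequalities in \eqref{eq1.2} are used exactly as in the isotropic doubly nonlinear theory: the first (coercivity) one produces the diffusion term $\sum_i u^{(m_i-m^-)(p_i-1)}|((u^{m^-}-k^{m^-})_\pm)_{x_i}|^{p_i}\zeta^{p_+}$ on the good side, while the second (growth) bound is used to estimate the cross terms coming from the derivatives falling on $\zeta$.

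The key steps, in order, are: (i) rewrite the parabolic term $\iint -u\,\varphi_t$ using the chain rule; because the time derivative hits $u$ rather than $u^{m^-}$, one is led naturally to the Boltzmann-type function $g_\pm(u^{m^-},k^{m^-})=\int_0^{(u^{m^-}-k^{m^-})_\pm} s\,(k^{m^-}\pm s)^{(1-m^-)/m^-}\,ds$, whose defining property is that $\partial_t\big[g_\pm(u^{m^-},k^{m^-})\big]=\pm(u^{m^-}-k^{m^-})_\pm\,\partial_t u$ (one checks this by differentiating, using that $u=(k^{m^-}\pm s)^{1/m^-}$ along the relevant range); this identity is what converts the awkward $u_t$ into a clean time derivative of $g_\pm$, yielding the $\sup_t\int g_\pm\zeta^{p_+}$ term and the $\iint g_\pm|\zeta_t|$ term after integrating by parts in $t$ and moving the $\zeta^{p_+}$-derivative to the right-hand side. (ii) For the spatial part, expand $\varphi_{x_i}=\pm((u^{m^-}-k^{m^-})_\pm)_{x_i}\zeta^{p_+}\pm p_+(u^{m^-}-k^{m^-})_\pm\zeta^{p_+-1}\zeta_{x_i}$; the first piece, paired with $a_i$ and using the coercivity in \eqref{eq1.2}, gives the diffusion term with a factor $\gamma^{-1}$; the second piece is bounded above, using the growth condition in \eqref{eq1.2}, by a product of $u^{(m_i-m^-)(p_i-1)/p_i}$, a power $1-1/p_i$ of the full anisotropic energy density, the truncation, and $\zeta^{p_+-1}|\zeta_{x_i}|$. (iii) Absorb these cross terms by Young's inequality with exponents $p_i$ and $p_i'=p_i/(p_i-1)$ chosen so that the $\big(\sum_j u^{(m_j-m^-)(p_j-1)}|(u^{m^-})_{x_j}|^{p_j}\big)^{1-1/p_i}$ factor recombines into a small multiple of the diffusion term (which is then hidden in the $\gamma^{-1}$ term on the left), leaving the remainder as the stated $\iint u^{(m_i-m^-)(p_i-1)}(u^{m^-}-k^{m^-})_\pm^{p_i}|\zeta_{x_i}|^{p_i}$ term. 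A small point to be careful with here: after Young's inequality one should note $((u^{m^-}-k^{m^-})_\pm)_{x_i}$ and $(u^{m^-})_{x_i}$ differ only on the set where the truncation is active with the appropriate sign, so the two diffusion-type expressions coincide where it matters, and $\zeta^{p_+}\le\zeta^{(p_+-1)p_i/(p_i-1)}$ (since $\zeta\le1$) makes the cutoff powers match. (iv) Finally take the supremum over $t\in[\tau-\eta,\tau+\eta]$ and combine, choosing the absorption constant and then a final $\gamma$ depending only on the data $N,p_i,m_i,K_1,K_2$.

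The main obstacle I anticipate is the time-regularization together with the behavior of $g_\pm$: one must justify the chain-rule identity $\partial_t g_\pm(u^{m^-},k^{m^-})=\pm(u^{m^-}-k^{m^-})_\pm u_t$ at the level of Steklov averages and verify that the integrals $\int g_\pm\zeta^{p_+}\,dx$ pass to the limit and are finite --- this is where the regularity $u\in C_{loc}(0,T;L^{1+m^-}_{loc})$ and the membership of $(u^{m^-}-k^{m^-})_\pm\zeta^{p_+}$ in the right parabolic space are genuinely used. One also needs the elementary two-sided bound $c_1\,(u^{m^-}-k^{m^-})_\pm^2\,(k^{m^-})^{(1-m^-)/m^-}\le g_\pm\le c_2\,(u^{m^-}-k^{m^-})_\pm^2\,(k^{m^-}\pm(u^{m^-}-k^{m^-})_\pm)^{(1-m^-)/m^-}$ to know $g_\pm$ is comparable to a genuine "energy" (needed later, not for \eqref{eq2.3} itself, but worth recording). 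Everything else is a careful but routine bookkeeping of anisotropic Young inequalities; the anisotropy does not create a real difficulty at this stage because each direction $i$ is handled independently with its own exponent $p_i$, and the common cutoff power $p_+$ is large enough to dominate all of them.
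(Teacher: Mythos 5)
Your proposal follows the paper's proof essentially verbatim: test \eqref{eq1.3} with $\pm(u^{m^-}-k^{m^-})_\pm\,\zeta^{p_+}$ (justified via a Steklov averaging process), apply the structure conditions \eqref{eq1.2}, and use Young's inequality to absorb the cross terms, arriving at \eqref{eq2.3}. One small slip worth flagging: differentiating $g_\pm$ actually gives $\partial_t g_\pm = \pm\, m^-\,(u^{m^-}-k^{m^-})_\pm\,\partial_t u$ (the factor $m^-$ arises because $\partial_t(u^{m^-})=m^-u^{m^--1}\partial_t u$, and $(k^{m^-}\pm(u^{m^-}-k^{m^-})_\pm)^{(1-m^-)/m^-}=u^{1-m^-}$ on the active set cancels the remaining power), not the identity you wrote without the $m^-$; this is harmless since the constant depends only on the data and is absorbed into $\gamma$, but it should be tracked.
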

\begin{proof}
Test \eqref{eq1.3} by $\pm(u^{m^{-}}-k^{m^{-}})_{\pm}\zeta^{p_{+}}$ and integrate over $K_{\vec{r}}(y)\times(\tau-\eta, t)$ with $t\in (\tau-\eta, \tau+\eta)$.  The use of such a test function is justified, modulus a standard
Steklov averaging process, by making use of the alternate weak formulation \eqref{eq1.4}. Using conditions \eqref{eq1.2} and the Young's inequality we arrive at the required \eqref{eq2.3}.
\end{proof}

\vspace{.1cm}

\noindent Following the lead of DeGiorgi, adapted to the parabolic setting by the russian school and then consolidated to degenerate and singular parabolic equations by DiBenedetto and his co-authors, we present and prove De Giorgi-type results. 

\vspace{.1cm}

\noindent {\bf A De Giorgi-type Lemma} - Consider three fixed numbers $\xi$, $a \in (0, 1)$ and $k>0$.

\begin{lemma}\label{lem2.4}
Let $u$ be a non-negative, local weak super-solution to \eqref{eq1.1} under conditions \eqref{eq1.2}. There exists $\nu_-\in (0, 1)$
depending only on the data, $a$, $\vec{r}$, $\eta$ {\color{teal} and $k$}, such that if
\begin{equation}\label{eq2.4}
|Q_{\vec{r},\eta}(y, \tau)\cap\{u\leqslant k \}|\leqslant \nu_- |Q_{\vec{r},\eta}(y, \tau)|,
\end{equation}
then
\begin{equation}\label{eq2.5}
u(x,t)\geqslant a k,\quad (x,t)\in Q_{\vec{r}/2, \eta/2}(y, \tau).
\end{equation}
\end{lemma}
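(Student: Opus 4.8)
The plan is to run a De Giorgi iteration on a shrinking family of intrinsic cylinders, using the energy estimate \eqref{eq2.3} for super-solutions with the minus-truncation and the parabolic anisotropic embedding \eqref{PS}. First I would fix the reference cylinder $Q_{\vec r,\eta}(y,\tau)$ and, for $n\in\mathbb N_0$, set levels $k_n:=ak+\dfrac{(1-a)k}{2^{n}}$ (so $k_n\downarrow ak$ and $k_0=k$), radii $r_{i,n}:=\dfrac{r_i}{2}+\dfrac{r_i}{2^{n+1}}$, half-heights $\eta_n:=\dfrac{\eta}{2}+\dfrac{\eta}{2^{n+1}}$, nested cylinders $Q_n:=Q_{\vec r_n,\eta_n}(y,\tau)$, and cutoffs $\zeta_n\in[0,1]$ with $\zeta_n\equiv1$ on $Q_{n+1}$, vanishing on the parabolic boundary of $Q_n$, and $|(\zeta_n)_{x_i}|\lesssim 2^{n}/r_i$, $|(\zeta_n)_t|\lesssim 2^{n}/\eta$. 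I would then introduce the quantities $Y_n:=|Q_n\cap\{u\le k_n\}|/|Q_n|$ and aim to show a recursive inequality of the form $Y_{n+1}\le C\,b^{n}\,Y_n^{1+\kappa}$ for constants $C>1$, $b>1$, $\kappa>0$ depending only on the data (and on $a,\vec r,\eta,k$ through the intrinsic scaling); by the standard fast-geometric-convergence lemma this gives $Y_n\to0$ provided $Y_0\le\nu_-$ for a suitably small $\nu_-=\nu_-(\text{data},a,\vec r,\eta,k)$, which is exactly \eqref{eq2.5}.

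The core estimate is passing from the energy inequality to the recursion. Apply \eqref{eq2.3} with $k$ replaced by $k_n$, with the minus-truncation and with $\zeta=\zeta_n$. On $\{u\le k_{n+1}\}$ one has $(u^{m^-}-k_n^{m^-})_-\gtrsim (k^{m^-}-k_{n+1}^{m^-})\gtrsim (1-a)^{m^-}k^{m^-}\,2^{-(n+1)m^-}$ up to data-constants, and the weight $g_-(u^{m^-},k_n^{m^-})$ is comparably bounded below by $\sim 2^{-2n}k^{m^-+1}$-type quantities; this converts the left-hand sides of \eqref{eq2.3} into a lower bound proportional to $|Q_{n+1}\cap\{u\le k_{n+1}\}|$ times an explicit power of $k$ and $2^{-n}$. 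For the right-hand side I would bound $g_-$ from above in the same fashion and estimate the terms $\iint u^{(m_i-m^-)(p_i-1)}(u^{m^-}-k_n^{m^-})_-^{p_i}|(\zeta_n)_{x_i}|^{p_i}$ and the time term by $|Q_n\cap\{u\le k_n\}|$ times suitable powers. The missing gain of integrability needed to produce the superlinear exponent $1+\kappa$ comes from \eqref{PS}: applying it to $\varphi=(u^{m^-}-k_n^{m^-})_-\zeta_n$ (with the $\alpha_i$ chosen so that $\alpha_i/m^-$ matches the exponent $(m_i-m^-)(p_i-1)/p_i$ appearing in the diffusion term, which is precisely what makes the gradient pieces on the right of \eqref{PS} controllable by the energy terms on the left of \eqref{eq2.3}) raises the power of the truncation above $1$, and Hölder's inequality over the set $\{u\le k_n\}$ then extracts a factor $Y_n^{\kappa}$. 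Collecting the powers of $2^{-n}$ from the level differences against the $2^{n}$ from the cutoff derivatives yields the geometric factor $b^{n}$.

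The main obstacle I anticipate is bookkeeping the intrinsic scaling so that the constants come out depending only on the data together with $a,\vec r,\eta,k$ and \emph{not} on $n$: because $m^-$ may be smaller or larger than $1$, the weight $g_-(u^{m^-},k^{m^-})$ and the powers of $k$ produced on both sides of \eqref{eq2.3} must be tracked carefully (in particular the exponent $\tfrac{1-m^-}{m^-}$ in the definition of $g_\pm$ changes sign with $m^-$), and one must verify that the net power of $k$ multiplying $Y_{n+1}$ matches, after the Hölder/embedding step, the net power of $k$ multiplying $Y_n^{1+\kappa}$, so that $k$ cancels in the recursion (otherwise the smallness $\nu_-$ would not be well-defined). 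A secondary technical point is that \eqref{PS} is stated for functions in $L^1(0,T;W^{1,1}_o)$ and requires $\varphi$ to vanish on the lateral boundary; this is guaranteed by the cutoff $\zeta_n$, but one has to match the integration interval and absorb the factor $T^{1-\theta p^\star/p}$ — here playing the role of a power of $\eta_n$ — into the constants. Once the exponents are reconciled, the fast-convergence lemma closes the argument on $Q_{\vec r/2,\eta/2}(y,\tau)$.
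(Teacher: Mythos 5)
Your overall architecture (De Giorgi iteration with nested intrinsic cylinders, the energy estimate \eqref{eq2.3} with the minus-truncation, the anisotropic embedding \eqref{PS}, H\"older in space-time to extract the superlinear exponent, and fast geometric convergence) is the same as the paper's, and many of the details you sketch, including the exponents $\alpha_i$ chosen so that the embedding's gradient factors match the diffusion terms in the energy estimate, are in the right spirit. But there is one genuine gap that your plan does not close, and it is exactly the point you flag in your last paragraph as "the main obstacle I anticipate."

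The obstacle is that the weights $u^{(m_i-m^-)(p_i-1)}$ in the good (left-hand) part of \eqref{eq2.3}, and also the factor $(k^{m^-}\pm s)^{\frac{1-m^-}{m^-}}$ inside $g_-(u^{m^-},k^{m^-})$, both degenerate on the set where $u$ is small. On $\{u\le k_n\}$ there is no a priori lower bound on $u$, so these weights can vanish (when $m_i>m^-$, or when $m^-<1$), and the energy terms then fail to control $\iint|((u^{m^-}-k_n^{m^-})_-^{\alpha_i})_{x_i}|^{p_i}$ from below, which is what the embedding \eqref{PS} needs as input. The bookkeeping you propose cannot reconcile the exponents, because the mismatch is structural, not a matter of powers of $k$: you have to compare $u^{(m_i-m^-)(p_i-1)}$ with $(u^{m^-}-k_n^{m^-})_-^{(\alpha_i-1)p_i}$, and these are incomparable near $u=0$.

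The missing device, which the paper introduces precisely at this point, is the lower truncation $v:=\max(u,\,ak)$. Since $a k<k_n$ for every $n$, one has $\{v<k_n\}=\{u<k_n\}$, so the measure-theoretic iteration quantity is unchanged. On $\{u>ak\}$ one has $v=u$, while on $\{u\le ak\}$ one has $v\equiv ak$ so $((v^{m^-}-k_n^{m^-})_-)_{x_i}=0$; in both cases $v\ge ak$. This yields the two lower bounds the iteration needs:
\[
g_-(u^{m^-},k_n^{m^-})\ \ge\ g_-(v^{m^-},k_n^{m^-})\ \ge\ \tfrac{(ak)^{1-m^-}}{2}\,(v^{m^-}-k_n^{m^-})_-^2 ,
\]
and, with $\alpha_i=\dfrac{\lambda_i+m^-}{m^- p_i}$ (note your formula for $\alpha_i$ is off; the correct condition is $m^-(\alpha_i-1)p_i=(m_i-m^-)(p_i-1)$, not $\alpha_i/m^-=(m_i-m^-)(p_i-1)/p_i$),
\[
\iint u^{(m_i-m^-)(p_i-1)}\big|\big((u^{m^-}-k_n^{m^-})_-\big)_{x_i}\big|^{p_i}\zeta_n^{p_+}
\ \ge\ \frac{a^{(m_i-m^-)(p_i-1)}}{\alpha_i^{p_i}}\iint \big|\big((v^{m^-}-k_n^{m^-})_-^{\alpha_i}\big)_{x_i}\big|^{p_i}\zeta_n^{p_+}.
\]
Only after this replacement $u\rightsquigarrow v$ does \eqref{PS}, applied to $(v^{m^-}-k_n^{m^-})_-\zeta_n^{p_+}$ with $\sigma=2$ and $\theta=p/p^\star$, produce a recursion $y_{j+1}\le \gamma\, 2^{j\gamma}a^{-\gamma}(\cdots)\,y_j^{1+p/(N+2/\alpha)}$ with all constants finite, from which $\nu_-$ is read off as in \eqref{eq2.8}. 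Also minor: the paper's levels are set in the $u^{m^-}$ variable, $k_j^{m^-}=(ak)^{m^-}+(1-a^{m^-})k^{m^-}2^{-j}$, which matches the truncation $(u^{m^-}-k_j^{m^-})_-$ appearing in \eqref{eq2.3} directly, whereas your levels $k_n=ak+(1-a)k2^{-n}$ in the $u$ variable would force you to carry additional comparisons between $k_n^{m^-}$ and $k_{n+1}^{m^-}$. Without the truncation $v=\max(u,ak)$ your plan does not close; with it, the rest is as you describe.
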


\begin{proof} For $j=0,1,2, ...$ define
 \[ k_j^{m^{-}}=[a k]^{m^{-}}+(1-a^{m^{-}}) k^{m^{-}} 2^{-j} , \] 
\[ \vec{r}_j:=\frac{\vec{r}}{2}(1+2^{-j}) \ ,  \quad \eta_j:=\frac{\eta}{2}(1+2^{-j}) ,\] 
construct the cubes and cylinders
\[ K_j:=K_{\vec{r}_j}(y) , \quad Q_j:=K_j\times I_j, \ \ \mbox{where} \ \ I_j:= (\tau-\eta_j, \tau+\eta) .\] 
and consider piecewise smooth cutoff functions $\zeta_j:=\zeta^{(1)}_j(x)\,\zeta^{(2)}_j(t)$, where 
\begin{itemize}
    \item[] $\zeta^{(1)}_j(x)\in C^1_0(K_j)$; $0\leqslant \zeta^{(1)}_j(x)\leqslant 1$;  $\zeta^{(1)}_j(x)=1$ in $K_{j+1}$ and $|(\zeta^{(1)}_j)_{x_i}|\leqslant \dfrac{\gamma 2^{j}}{r_i}$, for $i=1, ..., N$
    \item[] $\zeta^{(2)}_j(t)\in C^1(\mathbb{R})$; $0\leqslant \zeta^{(2)}_j(t)\leqslant 1$; $\zeta^{(2)}_j(t)=0$ for $t \leqslant \tau-\eta_{j}$ and for $t \geqslant\tau-\eta_{j}$; $\zeta^{(2)}_j(t)=1$ for 
$\tau-\eta_{j+1} \leqslant t\leqslant \tau+\eta_{j+1}$ and $|\zeta^{(2)}_{j, t}|\leqslant \dfrac{\gamma 2^j}{\eta}$.
\end{itemize}  
From Lemma \ref{lem2.3}, taken for the test functions $\varphi(x,t)= (u^{m^{-}}-k^{m^{-}}_j)_{-} \zeta_j^{p_+} $ considered defined in $Q_j$, we obtain
\begin{equation*}\label{est}
\sup\limits_{t \in I_j}\int\limits_{K_j \times\{t\}}g_{-}(u^{m^{-}}, k^{m^{-}}_j)\,\zeta^{p_{+}}_j\,dx+\gamma^{-1}\sum\limits_{i=1}^N \iint\limits_{Q_j}u^{(m_i-m^{-})(p_i-1)}\left| \left((u^{m^{-}}-k^{m^{-}}_j)_{-}\right)_{x_i} \right|^{p_{i}}\,\zeta^{p_{+}}_j dx\,dt 
\end{equation*}
\begin{eqnarray} 
    & \leqslant & \frac{\gamma 2^{j}}{\eta}\iint\limits_{Q_j} g_{-}(u^{m^{-}}, k_j^{m^{-}})\,\,dx dt +\gamma \sum\limits_{i=1}^{N} \frac{2^{j\ p_i}}{r^{p_i}_i} \iint\limits_{Q_j}u^{(m_i-m^{-})(p_i-1)}(u^{m^{-}}-k^{m^{-}}_j)_{-}^{p_{i}}\,dx\,dt \\ \nonumber
    & \leqslant & \gamma 2^{j p_+}\left\{ \frac{k^{m^{-}+1}}{\eta} + \sum\limits_{i=1}^{N} \frac{k^{\lambda_i+m^{-}}}{r^{p_i}_i} \right\} |Q_j \cap[u<k_j]| \\ \nonumber 
    & \leqslant & \gamma 2^{j p^+}k^{\lambda_{+}+m^{-}}\Big(\frac{k^{1-\lambda_{+}}}{\eta}+\sum\limits_{i=1}^N \frac{k^{\lambda_i-\lambda_{+}}}{r^{p_{i}}_i}\Big)|Q_j\cap [u < k_j]| \label{est}
\end{eqnarray} 
by making use of the conditions on $\zeta$ and by estimating $ \displaystyle{g_{-}(u^{m^{-}}, k^{m^{-}}) \leq k_j^{m^{-}+1} \chi_{[u<k_j]}}$. To estimate the left-hand side of the previous integral inequality we define
$$v:=\max(u, a k).$$
By noting that
\[ 
\int\limits_{K_j\times\{t\}}g_{-}(u^{m^{-}}, k_j^{m^{-}}) \,\zeta_{j}^{p_{+}}\,dx \geqslant
\int\limits_{K_j\times\{t\}}g_{-}(v^{m^{-}}, k_j^{m^{-}}) \,\zeta_{j}^{p_{+}}\,dx \geqslant \frac{(a k)^{1-m^{-}}}{2} \int\limits_{K_j\times\{t\}}(v^{m^{-}}-k^{m^{-}}_j)_{-}^2\,\zeta^{p_{+}}_j\,dx,
\] 
and taking $\alpha_i=\dfrac{\lambda_i +m^{-}}{m^{-}\,p_i}$, 
\[ \iint\limits_{Q_j} u^{(m_i-m^{-})(p_i-1)}\left| \left((u^{m^{-}}-k^{m^{-}}_j)_{-}\right)_{x_i} \right|^{p_{i}} \,\zeta_j^{p_{+}}\,dxdt \geqslant \iint\limits_{Q_j \cap [u> a k]} u^{(m_i-m^{-})(p_i-1)}\left| \left((u^{m^{-}}-k^{m^{-}}_j)_{-}\right)_{x_i} \right|^{p_{i}}\,\zeta_j^{p_{+}}\,dxdt\]
\[= \iint\limits_{Q_j \cap [u> a k]} v^{(m_i-m^{-})(p_i-1)}\left| \left((u^{m^{-}}-k^{m^{-}}_j)_{-}\right)_{x_i} \right|^{p_{i}} \,\zeta_j^{p_{+}}\,dxdt \geqslant \iint\limits_{Q_j} \frac{a^{(m_i-m^{-})(p_i-1)}}{\alpha_i^{p_i}} \left|\left((v^{m^{-}}-k^{m^{-}}_j)^{\alpha_i}_{-}\right)_{x_i} \right|^{p_{i}}\,\zeta^{p_{+}}_j\,dx\,dt \ \]
we arrive at 
\begin{multline*}
(a k)^{1-m^{-}}\sup\limits_{t\in I_j}\int\limits_{K_j\times\{t\}}(v^{m^{-}}-k^{m^{-}}_j)_{-}^2\,\zeta^{p_{+}}_j\,dx+ \gamma^{-1} a^{(m^+-m^-)(p^+-1)}\sum\limits_{i=1}^N \iint\limits_{Q_j}\left|\left((v^{m^{-}}-k^{m^{-}}_j)^{\alpha_i}_{-}\right)_{x_i} \right|^{p_{i}} \; \zeta^{p_{+}}_j\,dx\,dt\\
\leqslant \gamma 2^{j p^+}\ k^{\lambda_{+}+m^{-}}\Big(\frac{k^{1-\lambda_{+}}}{\eta}+\sum\limits_{i=1}^N \frac{k^{\lambda_i-\lambda_{+}}}{r^{p_{i}}_i}\Big)|Q_j\cap [u < k_j]|.
\end{multline*}
On the one hand, by applying H\"{o}lder's inequality together with the Sobolev embedding \ref{embedding}, with $\alpha_i=\frac{\lambda_i +m^{-}}{m^{-}\,p_i}$, $\alpha=\frac{1}{N}\sum\limits_{i=1}^n\alpha_i$, $\theta=p/p^{\star}$ and $\sigma=2$, we get
\begin{eqnarray*}
\iint\limits_{Q_j}\left((v^{m^{-}}-k^{m^{-}}_j)_{-}\zeta^{p_{+}}_j\right)^{\alpha p}\,dxdt & \leqslant & \Big(\iint\limits_{Q_j}\big[(v^{m^{-}}-k^{m^{-}}_j)_{-}\zeta^{p_{+}}_j\big]^{p\frac{\alpha N+2}{N}}\,dxdt\Big)^{\frac{N}{N+2/\alpha}}|Q_j\cap\{v<k_j\}|^{\frac{2/\alpha}{N+2/\alpha}}\\ & \leqslant &
\gamma\Big(\sup\limits_{t\in I_j}\int\limits_{K_j\times\{t\}}(v^{m^{-}}-k_j^{m^{-}})^{2}_{-}\zeta^{p_{+}}_j\,dx\Big)^{\frac{p}{N+2/\alpha}} \\
& & \times \Big(\sum\limits_{i=1}^N \iint\limits_{Q_j}\left|\left((v^{m^{-}}-k^{m^{-}}_j)^{\alpha_i}_{-}\right)_{x_i} \right|^{p_{i}}\,dxdt\Big)^{\frac{N}{N+2/\alpha}}|Q_j\cap [u< k_j]|^{\frac{2/\alpha}{N+2/\alpha}}\\ 
&\leqslant & \gamma 2^{j\gamma} a^{-\gamma}
k^{\frac{(m^{-}-1)p}{N+2/\alpha}+\frac{(\lambda_{+}+m^{-})(N+p)}{N+2/\alpha}}\Big(\frac{k^{1-\lambda_{+}}}{\eta}+\sum\limits_{i=1}^N \frac{k^{\lambda_i -\lambda_{+}}}{r^{p_{i}}_i}\Big)^{\frac{N+p}{N+2/\alpha}}\\
& & \times |Q_j \cap ]u<k_j]|^{1+\frac{p}{N+2/\alpha}}.
\end{eqnarray*}
On the other hand
\begin{eqnarray*} 
\iint\limits_{Q_j}(v^{m^{-}}-k^{m^{-}}_j)_{-}^{\alpha p}\zeta^{p_{+}\alpha p}_j\,dxdt & \geq & \iint\limits_{Q_{j+1}}(v^{m^{-}}-k^{m^{-}}_j)_{-}^{\alpha p}\,dxdt \geq 
(k^{m^{-}}_j-k^{m^{-}}_{j+1})^{\alpha p} |Q_{j+1}\cap [ v < k_{j+1}]| \\
& \geq &
k^{m^{-} \alpha p} 
\left(\frac{1-a^{m^{-}}}{2^{j+1}}\right)^{\alpha p} |Q_{j+1}\cap [v< k_{j+1}]|. 
\end{eqnarray*}
Gathering the previous estimates and setting $y_j:=\dfrac{|Q_j\cap [ v<k_j]|}{|Q_j|}$, we get
\[
y_{j+1}\leqslant 
\gamma 2^{j\gamma} a^{-\gamma}\Big(\frac{\eta}{k^{1-\lambda_{+}}}\Big)^{\frac{p}{N+2/\alpha}}\Big(\prod\limits_{i=1}^N \frac{r_i}{k^{\frac{\lambda_i-\lambda_{+}}{p_i}}}\Big)^{\frac{p}{N+2/\alpha}}\Big(\frac{k^{1-\lambda_{+}}}{\eta}+\sum\limits_{i=1}^N \frac{k^{\lambda_i-\lambda_{+}}}{r^{p_{i}}_i}\Big)^{\frac{N+p}{N+2/\alpha}}y_{j}^{1+\frac{p}{N+2/\alpha}}.\]
We now take $\nu_-$ such that
\begin{equation}\label{eq2.8}
\nu_-:=\frac{a^{\gamma}}{\gamma}\frac{k^{1-\lambda_{+}}}{\eta}\Bigg(\prod\limits_{i=1}^N \frac{r_i}{k^{\frac{\lambda_i-\lambda_{+}}{p_i}}}\Bigg)^{-1}\Big(\frac{k^{1-\lambda_{+}}}{\eta}+\sum\limits_{i=1}^N \frac{k^{\lambda_i-\lambda_{+}}}{r^{p_{i}}_i}\Big)^{-\frac{N+p}{p}}.
\end{equation}
and then, recalling de definition of $v$, one has  $y_0\leqslant \nu_-$ and thereby $\lim\limits_{j\rightarrow \infty}y_j=0$,
which proves \eqref{eq2.5}.
\end{proof}

\noindent We now present and prove an analogous result for locally bounded sub-solutions. Denote by $\mu^{\pm}$, $\omega$ the numbers satisfying 
\begin{equation}\label{boundsubsol}
    \mu^+ = \esssup\limits_{\Omega_T} u, \qquad \mu^-=\essinf\limits_{\Omega_T} u,\qquad \omega =  \mu^+-\mu^-.
\end{equation} 

\begin{lemma}\label{DGsubsol}
Let $u$ be a non-negative, locally bounded, local weak sub-solution to \eqref{eq1.1} under conditions \eqref{eq1.2}. 
There exists $\nu_+ \in (0, 1)$, depending only on the data, $a$, $\xi$, $\vec{r}$, $\eta$ and $\omega$, such that if
\begin{equation}\label{eq2.6}
|Q_{\vec{r},\eta}(y, \tau)\cap\{u^{m^{-}}\geqslant [\mu^{+}]^{m^{-}}-[\xi \omega]^{m^{-}}\}|\leqslant \nu_+ |Q_{\vec{r},\eta}(y, \tau)|,
\end{equation}
then
\begin{equation}\label{eq2.7}
u^{m^{-}}\leqslant [\mu^{+}]^{m^{-}}-[a\xi \omega]^{m^{-}},\quad (x,t)\in Q_{\vec{r}/2, \eta/2}(y, \tau).
\end{equation}
\end{lemma}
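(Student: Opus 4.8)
The plan is to mirror the proof of Lemma \ref{lem2.4}, replacing the truncation $(u^{m^-}-k^{m^-})_-$ by the truncation $(u^{m^-}-\ell_j^{m^-})_+$ adapted to the supremum $\mu^+$, and to run the same De Giorgi iteration. Concretely, I would set, for $j=0,1,2,\dots$,
\[
\ell_j^{m^-} = [\mu^+]^{m^-} - [a\xi\omega]^{m^-} - (1-a^{m^-})[\xi\omega]^{m^-}2^{-j},
\]
so that $\ell_0^{m^-}=[\mu^+]^{m^-}-[\xi\omega]^{m^-}$ and $\ell_j^{m^-}\uparrow [\mu^+]^{m^-}-[a\xi\omega]^{m^-}$, and introduce the same shrinking cubes $K_j=K_{\vec r_j}(y)$, cylinders $Q_j=K_j\times I_j$, and cutoff functions $\zeta_j$ as in Lemma \ref{lem2.4}. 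Testing Lemma \ref{lem2.3} (the super/sub version, with the $+$ choice) with $\varphi=(u^{m^-}-\ell_j^{m^-})_+\zeta_j^{p_+}$ and using $g_+(u^{m^-},\ell_j^{m^-})\le \gamma\,\omega^{m^-+1}\chi_{[u^{m^-}>\ell_j^{m^-}]}$ (valid since on the relevant set $u^{m^-}-\ell_j^{m^-}\le [\xi\omega]^{m^-}\le\omega^{m^-}$ and the weight $(\ell_j^{m^-}+s)^{(1-m^-)/m^-}$ is comparable to a power of $\mu^+$ or $\omega$ there), I would obtain the analogue of the energy inequality with right-hand side $\gamma 2^{jp_+}\omega^{\lambda_++m^-}\big(\tfrac{\omega^{1-\lambda_+}}{\eta}+\sum_i\tfrac{\omega^{\lambda_i-\lambda_+}}{r_i^{p_i}}\big)|Q_j\cap[u^{m^-}>\ell_j^{m^-}]|$, after estimating $(u^{m^-}-\ell_j^{m^-})_+\le[\xi\omega]^{m^-}$ in the gradient-of-cutoff terms.

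Next I would lower-bound the left-hand side. Writing $w:=\min(u,\,(\, [\mu^+]^{m^-}-[a\xi\omega]^{m^-}\,)^{1/m^-})$, so that $w^{m^-}\le [\mu^+]^{m^-}-[a\xi\omega]^{m^-}$ and $(w^{m^-}-\ell_j^{m^-})_+=(u^{m^-}-\ell_j^{m^-})_+$ on $[u^{m^-}\le[\mu^+]^{m^-}-[a\xi\omega]^{m^-}]$, the supremum term controls $\int (w^{m^-}-\ell_j^{m^-})_+^2\zeta_j^{p_+}dx$ up to a factor that is a power of $\mu^+$ (using $(\ell_j^{m^-}+s)^{(1-m^-)/m^-}\gtrsim (\mu^+)^{1-m^-}$ for $0<m^-<1$, and the reverse comparison for $m^-\ge1$), and the gradient term controls $\sum_i\iint_{Q_j}|((w^{m^-}-\ell_j^{m^-})_+^{\alpha_i})_{x_i}|^{p_i}\zeta_j^{p_+}dxdt$ with $\alpha_i=\tfrac{\lambda_i+m^-}{m^-p_i}$, because on $[u^{m^-}>\ell_j^{m^-}]$ one has $u$ bounded below away from $0$ in terms of $\mu^+,\xi,\omega$ (provided $[\mu^+]^{m^-}-[\xi\omega]^{m^-}>0$; if it is $\le0$ the conclusion is trivial), so the weight $u^{(m_i-m^-)(p_i-1)}$ is bounded below by a constant depending on $\omega$. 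Applying Hölder and the anisotropic Sobolev embedding Lemma \ref{embedding} with $\theta=p/p^\star$, $\sigma=2$ exactly as in Lemma \ref{lem2.4}, and on the other side bounding $\iint_{Q_j}(w^{m^-}-\ell_j^{m^-})_+^{\alpha p}\zeta_j^{p_+\alpha p}\ge (\ell_{j+1}^{m^-}-\ell_j^{m^-})^{\alpha p}|Q_{j+1}\cap[w^{m^-}>\ell_{j+1}^{m^-}]|$ with $\ell_{j+1}^{m^-}-\ell_j^{m^-}= (1-a^{m^-})[\xi\omega]^{m^-}2^{-(j+1)}$, I get the recursive inequality
\[
y_{j+1}\le \gamma\,2^{j\gamma}\,C(a,\xi,\omega)\;\Big(\tfrac{\omega^{1-\lambda_+}}{\eta}+\sum_{i=1}^N\tfrac{\omega^{\lambda_i-\lambda_+}}{r_i^{p_i}}\Big)^{\frac{N+p}{N+2/\alpha}}\Big(\tfrac{\eta}{\omega^{1-\lambda_+}}\Big)^{\frac{p}{N+2/\alpha}}\Big(\prod_{i=1}^N\tfrac{r_i}{\omega^{(\lambda_i-\lambda_+)/p_i}}\Big)^{\frac{p}{N+2/\alpha}}\,y_j^{1+\frac{p}{N+2/\alpha}},
\]
for $y_j:=|Q_j\cap[w^{m^-}>\ell_j^{m^-}]|/|Q_j|$, which is the same structure as in Lemma \ref{lem2.4}. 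Choosing $\nu_+$ to be the reciprocal of the constant multiplying $y_j^{1+p/(N+2/\alpha)}$ (explicitly, the obvious $+$-analogue of \eqref{eq2.8} with $k$ replaced by $\omega$ and with the extra powers of $\mu^+,\xi,\omega$ absorbed), the fast-geometric-convergence lemma gives $y_j\to0$, hence $w^{m^-}\le \lim_j\ell_j^{m^-}=[\mu^+]^{m^-}-[a\xi\omega]^{m^-}$ a.e.\ on $Q_{\vec r/2,\eta/2}(y,\tau)$, which is precisely $u^{m^-}\le[\mu^+]^{m^-}-[a\xi\omega]^{m^-}$ there, i.e.\ \eqref{eq2.7}.

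The main obstacle is the bookkeeping of the nonlinear weights $u^{(m_i-m^-)(p_i-1)}$ and $(k^{m^-}\pm s)^{(1-m^-)/m^-}$ in this $+$ (supremum) setting: unlike in the super-solution lemma, where $u$ ranges in $[ak,k]$ on the relevant set, here $u$ lies in a band near $\mu^+$, so one must check that on $[u^{m^-}>\ell_j^{m^-}]$ the quantity $u$ stays comparably bounded above by $\mu^+$ and below by a positive constant depending on $\omega,\xi,a$ (which forces the harmless a priori reduction to the case $[\mu^+]^{m^-}-[\xi\omega]^{m^-}>0$), and that the resulting $\mu^+$- and $\omega$-dependent constants can be absorbed into the definition of $\nu_+$ — this is exactly why $\nu_+$ is allowed to depend on $\omega$ (and $a$, $\xi$) in the statement. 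The remaining steps — the energy estimate, the Hölder/Sobolev interpolation with the specific choice $\alpha_i=(\lambda_i+m^-)/(m^-p_i)$, and the iteration — are routine transcriptions of the proof of Lemma \ref{lem2.4}.
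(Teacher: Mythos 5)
Your proposal is correct and follows essentially the same route as the paper: the same levels $\ell_j$, the same geometry $K_j$, $Q_j$, $\zeta_j$, the energy inequality from Lemma \ref{lem2.3}, the bounds on $g_+$, the factors $\alpha_i=(\lambda_i+m^-)/(m^-p_i)$, the H\"older--Sobolev interpolation, and the fast geometric convergence to define $\nu_+$. One small remark: the auxiliary truncation $w=\min(u,([\mu^+]^{m^-}-[a\xi\omega]^{m^-})^{1/m^-})$ is unnecessary here — unlike the super-solution Lemma \ref{lem2.4}, where $v=\max(u,ak)$ is needed because $u$ may be near $0$ on $[u<k_j]$, in the present $+$-truncation setting one automatically has $u\geqslant \ell_0>0$ on $[u^{m^-}>\ell_j^{m^-}]$, and the paper gets the gradient comparison directly from the pointwise inequality $(u^{m^-}-k_j^{m^-})_+^{(\alpha_i-1)p_i}\leqslant u^{(m_i-m^-)(p_i-1)}$, with no truncation and no lower-bound-on-the-weight argument; your version works too but carries extra bookkeeping that gets absorbed in $\nu_+$.
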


\begin{proof}
\noindent To prove \eqref{eq2.7} we consider levels $k^{m^{-}}_j=[\mu^{+}]^{m^{-}}-[a \xi \omega]^{m^{-}}-(1-a^{m^{-}})[\xi \omega]^{m^{-}}2^{-j}$ and take the same  cubes $K_j$, cylinders $Q_j$ and cutoff functions $\zeta_j$ as in the proof of Lemma \ref{lem2.4}. By Lemma \ref{lem2.3}
\[ \sup\limits_{t\in I_j}\int\limits_{K_j}g_+(u^{m^-}, k_j^{m^-})\,\zeta^{p_{+}}_j \,dx  + \gamma^{-1}\sum\limits_{i=1}^N \iint\limits_{Q_j}u^{(m_i-m^{-})(p_i-1)}\left|\left((u^{m^{-}}-k^{m^{-}}_j)_{+}\right)_{x_i}\right|^{p_{i}}\zeta^{p_{+}}_j\,dx\,dt\]
\[
\leqslant  \frac{\gamma 2^{j }}{\eta}\iint\limits_{Q_j}g_+(u^{m^-}, k_j^{m^-})\,\,dx dt+
\gamma 2^{jp^+}\sum\limits_{i=1}^{N}r_{i}^{-p_{i}}\iint\limits_{Q_j}u^{(m_i-m^-)(p_1-1)} (u^{m^-}-k_j^{m^-})_+^{p_i}\,dx\,dt \ . \]
Since
\[\frac{1}{2}\Big([\mu^{+}]^{m^{-}}-[\xi \omega]^{m^{-}}\Big)^{\frac{1-m^{-}}{m^{-}}}  (u^{m^{-}}-k^{m^{-}}_j)^2_{+} \leq g_+(u^{m^-}, k_j^{m^-}) \leq \frac{1}{2} [\mu^{+}]^{1-m^{-}} (\xi \omega)^{2m^-}
\]
\[u^{(m_i-m^-)(p_1-1)} (u^{m^-}-k_j^{m^-})_+^{p_i} \leq [\mu^+]^{(m_i-m^-)(p_i-1)} [\xi \omega]^{m^- p_i}\]
and
\begin{equation*}
\iint\limits_{Q_j}u^{(m_i-m^{-})(p_i-1)}\left|\left((u^{m^{-}}-k^{m^{-}}_j)_{+}\right)_{x_i}\right|^{p_{i}}\zeta^{p_{+}}_j\,dx\,dt\geqslant \alpha_i^{-p_i}
\iint\limits_{Q_j}\left|\left((u^{m^{-}}-k^{m^{-}}_j)^{\alpha_i}_{+}\right)_{x_i}\right|^{p_{i}}\zeta^{p_{+}}_j\,dx\,dt,
\end{equation*}
from the previous estimates we arrive at
\[
\Big([\mu^{+}]^{m^{-}}-[\xi \omega]^{m^{-}}\Big)^{\frac{1-m^{-}}{m^{-}}} \sup\limits_{t\in I_j}\int\limits_{K_j\times\{t\}}(u^{m^{-}}-k^{m^{-}}_j)^2_{+}\zeta^{p_{+}}_j\,dx+ \sum\limits_{i=1}^{N} \iint\limits_{Q_j}\left|\left((u^{m^{-}}-k^{m^{-}}_j)^{\alpha_i}_{+}\right)_{x_i}\right|^{p_{i}}\zeta^{p_{+}}_j\,dx\,dt\]
\begin{eqnarray*}
&\leqslant & \gamma 2^{j\gamma}\Big([\mu^{+}]^{1-m^{-}}\frac{[\xi \omega]^{2m^{-}}}{\eta}+
\sum\limits_{i=1}^N [\mu^{+}]^{(m_i-m^{-})(p_i-1)}\frac{[\xi \omega]^{m^{-}p_i}}{r^{p_i}}\Big)|Q_j\cap [u> k_j]|\\
& = & 
\gamma 2^{j\gamma}[\mu^{+}]^{1-m^{-}}[\xi \omega]^{2m^{-}+\lambda_{+}-1}\Big(\frac{[\xi \omega]^{1-\lambda_{+}}}{\eta}+
\sum\limits_{i=1}^N \frac{[\xi \omega]^{\lambda_i-\lambda_{+}}}{r_{i}^{p_i}}\big[\frac{\mu^{+}}{\xi \omega}\big]^{\lambda_i-m^-(p_i-2)-1}\Big)|Q_j\cap\{u\geqslant k_j\}|\\
&=& \gamma 2^{j\gamma}[\mu^{+}]^{1-m^{-}}[\xi \omega]^{2m^{-}+\lambda_{+}-1} H_1(\xi\omega, \mu^{+}, \eta, \vec{r})\,\,|Q_j\cap [u>k_j]|.
\end{eqnarray*}
Arguing in a similar way as before, by means of H\"{o}lder's inequality and the Sobolev embedding \eqref{embedding} and considering $|A_j|=Q_{j}\cap [ u > k_{j}] $, we get 
\begin{eqnarray*}
(k^{m^{-}}_{j+1}-k^{m^{-}}_{j})^{\alpha p} |A_{j+1}| & \leqslant & 
\iint\limits_{Q_j}(u^{m^{-}}-k^{m^{-}}_j)_{+}^{\alpha p}\zeta^{p_{+}\alpha p}_j\,dxdt  \leq \gamma 2^{j\gamma}\Big([\mu^{+}]^{m^{-}}-[\xi \omega]^{m^{-}}\Big)^{-\frac{(1-m^{-})p}{m^{-}(N+2/\alpha)}}\\
& & \times \Big([\mu^{+}]^{1-m^{-}}[\xi \omega]^{2m^{-}+\lambda_{+}-1} H_1(\xi\omega, \mu^{+}, \eta, \vec{r})\Big)^{\frac{p+N}{N+2/\alpha}}\,\,|A_j|^{1+\frac{p}{N+2/\alpha}}.
\end{eqnarray*}
From this, setting $y_j:=\dfrac{|A_j|}{|Q_j|}$, we obtain
\begin{multline*}
y_{j+1}\leqslant \gamma 2^{j\gamma}\, a^{-\gamma}\,\Big(\frac{[\mu^{+}]^{m^{-}}}{[\mu^{+}]^{m^{-}}-[\xi\omega]^{m^{-}}}\Big)^{\frac{(1-m^{-})p}{m^{-}(N+2/\alpha)}} \Big(\frac{\mu^{+}}{\xi\omega}\Big)^{\frac{(1-m^-)N}{N+2/\alpha}}\Big(\frac{\eta}{[\xi \omega]^{1-\lambda_{+}}}\Big)^{\frac{p}{N+2/\alpha}}\Big(\prod_{i=1}^N \frac{r_i}{[\xi \omega]^{\frac{\lambda_i-\lambda_{+}}{p_i}}}\Big)^{\frac{p}{N+2/\alpha}}\\\times [H_1(\xi\omega, \mu^{+}, \eta, \vec{r})]^{\frac{p+N}{N+2/\alpha}}
y_j^{1+\frac{p}{N+2/\alpha}},
\end{multline*}
which yields $\lim\limits_{j\rightarrow \infty}y_j=0$, and thereby \eqref{eq2.7}, provided that $\nu_+$ is chosen to satisfy
\begin{equation}\label{eq2.9}
\nu_+=\frac{a^{\gamma}}{\gamma}\Big(\frac{[\mu^{+}]^{m^{-}}-[\xi\omega]^{m^{-}}}{[\mu^{+}]^{m^{-}}}\Big)^{\frac{1-m^{-}}{m^{-}}} \Big(\frac{\xi\omega}{\mu^{+}}\Big)^{\frac{(1-m^-)N}{p}} \frac{[\xi \omega]^{1-\lambda_{+}}}{\eta}\Bigg(\prod\limits_{i=1}^N \frac{r_i}{[\xi \omega]^{\frac{\lambda_i-\lambda_{+}}{p_i}}}\Bigg)^{-1}
[H_1(\xi\omega, \mu^{+}, \eta, \vec{r})]^{-\frac{p+N}{p}}.
\end{equation}

\end{proof}

\vspace{.1cm}

\noindent {\bf A Variant of De Giorgi-Type Lemma Involving ''Initial Data''} - Assume that $u$ is a non-negative, local weak super-solution to \eqref{eq1.1}, \eqref{eq1.2} in $\Omega_T$ and assume additionally that, for a certain time level $\bar{t} \in (0,T)$, there holds
\begin{equation}\label{eq2.10}
u(x, \bar{t})\geqslant k_0 \quad \text{for}\quad x\in K_{\vec{r}}(y) \quad \mbox{and for some} \quad k_0>0 .
\end{equation}

\noindent By writing the energy estimates \eqref{eq2.3} over the cylinder $Q_j=K_j\times (\bar{t}, \bar{t}+\eta)$ for test functions $(u^{m^{-}}-k_j^{m^-})_{-} \zeta_j(x)^{p^+}$, being 
\[ k_j^{m^-}:=(ak_0)^{m^-}+(1-a^{m^-})k_0^{m^-}\,2^{-j} , \quad 0<a<1  , \]
\[ K_j:=K_{\vec{r}_j}(y)  \quad \
\mbox{where} \quad  \vec{r}_j:=\frac{\vec{r}}{2}(1+2^{-j})  \]
and $\zeta_j(x)$ a time independent cutoff function,  and reasoning analogously to what was done in the previous lemma when proving \eqref{eq2.5}, we obtain
\begin{equation*}
y_{j+1}\leqslant \gamma 2^{j\gamma} a^{-\gamma}\bigg(\frac{\eta}{k_0^{1-\lambda_{+}}}\bigg)^{\frac{p}{N+2/\alpha}}\Bigg(\prod\limits_{i=1}^N \frac{r_i}{k_0^{\frac{\lambda_i-\lambda_{+}}{p_i}}}\Bigg)^{\frac{p}{N+2/\alpha}}\Big(\sum\limits_{i=1}^N \frac{k_0^{\lambda_i-\lambda_{+}}}{r^{p_{i}}_i}\Big)^{\frac{N+p}{N+2/\alpha}}y_{j}^{1+\frac{p}{N+2/\alpha}},
\end{equation*}
where $y_j:=\dfrac{|Q_j\cap [v< k_j]|}{|Q_j|}$ and $v =\max \{u, a k_0\}$. Hence, provided that 
\begin{equation}\label{eq2.11}
\frac{| Q_{\vec{r}, \eta}^+ \cap [u<k_0]|}{| Q_{\vec{r}, \eta}^+ |}
\leqslant \frac{a^{\gamma}}{\gamma}\frac{k_0^{1-\lambda_{+}}}{\eta}\Bigg(\prod\limits_{i=1}^N \frac{r_i}{k_0^{\frac{\lambda_i-\lambda_{+}}{p_i}}}\Bigg)^{-1}\Big(\sum\limits_{i=1}^N \frac{k_0^{\lambda_i-\lambda_{+}}}{r^{p_{i}}_i}\Big)^{-\frac{N+p}{p}}:=\nu^{(1)}_-.
\end{equation}
we get $\lim\limits_{j\rightarrow \infty}y_j=0$. So, we summarize
\begin{lemma}\label{lem2.5}
Let $u$ be a non-negative, local weak super-solution to \eqref{eq1.1}-\eqref{eq1.2} and take $0<a<1$. Assume that $k_0>0$ is such that \eqref{eq2.10} and \eqref{eq2.11} hold, then
\begin{equation}\label{eq2.12}
u(x,t)\geqslant a\,k_0,\quad (x,t)\in K_{\frac{\vec{r}}{2}}(y)\times (\bar{t}, \bar{t}+\eta).
\end{equation}
\end{lemma}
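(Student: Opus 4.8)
The statement is a De Giorgi iteration ``with initial datum'': it runs the same scheme as Lemma~\ref{lem2.4} but over the forward cylinder $Q^{+}_{\vec r,\eta}(y,\bar t)=K_{\vec r}(y)\times(\bar t,\bar t+\eta)$, using the pointwise bound \eqref{eq2.10} to suppress the contribution of the initial time slice. The plan is to begin by setting, for $j=0,1,2,\dots$,
\[
k_j^{m^-}:=(ak_0)^{m^-}+(1-a^{m^-})k_0^{m^-}2^{-j},\qquad \vec r_j:=\tfrac{\vec r}{2}(1+2^{-j}),\qquad K_j:=K_{\vec r_j}(y),\quad Q_j:=K_j\times(\bar t,\bar t+\eta),
\]
and to choose \emph{time-independent} cutoffs $\zeta_j=\zeta_j(x)\in C^1_0(K_j)$ with $\zeta_j\equiv 1$ on $K_{j+1}$ and $|(\zeta_j)_{x_i}|\le\gamma 2^j/r_i$. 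Since $ak_0\le k_j\le k_0$ for every $j$ and $k_j\downarrow ak_0$, assumption \eqref{eq2.10} forces $\big([u(x,\bar t)]^{m^-}-k_j^{m^-}\big)_-=0$ on $K_j$, so the initial term in \eqref{eq2.3} vanishes.

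The main computation is then to apply the energy inequality of Lemma~\ref{lem2.3} (written over the forward cylinder $Q_j$) with test function $(u^{m^-}-k_j^{m^-})_-\zeta_j^{p_+}$: the $|\zeta_t|$ term drops because $\zeta_j$ is independent of $t$, the initial term is zero by the above, and on $\{u<k_j\}\subset\{u<k_0\}$ one estimates $g_-(u^{m^-},k_j^{m^-})\le k_j^{m^-+1}\chi_{[u<k_j]}$ and $u^{(m_i-m^-)(p_i-1)}(u^{m^-}-k_j^{m^-})_-^{p_i}\le\gamma k_0^{\lambda_i+m^-}\chi_{[u<k_j]}$, which bounds the right-hand side by $\gamma 2^{jp^+}k_0^{\lambda_++m^-}\big(\sum_i k_0^{\lambda_i-\lambda_+}r_i^{-p_i}\big)\,|Q_j\cap[u<k_j]|$. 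Exactly as in Lemma~\ref{lem2.4} I would then pass to $v:=\max\{u,ak_0\}$, using $g_-(u^{m^-},k_j^{m^-})\ge\tfrac{(ak_0)^{1-m^-}}{2}(v^{m^-}-k_j^{m^-})_-^2$ and, with $\alpha_i:=\tfrac{\lambda_i+m^-}{m^-p_i}$, the lower bound $u^{(m_i-m^-)(p_i-1)}|((u^{m^-}-k_j^{m^-})_-)_{x_i}|^{p_i}\ge\gamma^{-1}a^{(m^+-m^-)(p^+-1)}\alpha_i^{-p_i}|((v^{m^-}-k_j^{m^-})^{\alpha_i}_-)_{x_i}|^{p_i}$ on $\{u>ak_0\}$, to bound the left-hand side from below. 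Feeding the resulting energy estimate into H\"older's inequality and the anisotropic parabolic Sobolev embedding \eqref{PS} (with $\theta=p/p^\star$, $\sigma=2$, and these $\alpha_i$), and using $\iint_{Q_j}(v^{m^-}-k_j^{m^-})_-^{\alpha p}\zeta_j^{\alpha p\,p_+}\,dx\,dt\ge(k_j^{m^-}-k_{j+1}^{m^-})^{\alpha p}|Q_{j+1}\cap[v<k_{j+1}]|$, produces for $y_j:=|Q_j\cap[v<k_j]|/|Q_j|$ (which coincides with $|Q_j\cap[u<k_j]|/|Q_j|$ because $k_j>ak_0$) the recursion displayed just before the statement,
\[
y_{j+1}\le\gamma 2^{j\gamma}a^{-\gamma}\Big(\tfrac{\eta}{k_0^{1-\lambda_+}}\Big)^{\frac{p}{N+2/\alpha}}\Big(\prod_i\tfrac{r_i}{k_0^{(\lambda_i-\lambda_+)/p_i}}\Big)^{\frac{p}{N+2/\alpha}}\Big(\sum_i\tfrac{k_0^{\lambda_i-\lambda_+}}{r_i^{p_i}}\Big)^{\frac{N+p}{N+2/\alpha}}y_j^{1+\frac{p}{N+2/\alpha}}.
\]

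To close, I would invoke the standard fast geometric convergence lemma: the right-hand side of \eqref{eq2.11} is precisely the threshold $\nu^{(1)}_-$ for which $y_0\le\nu^{(1)}_-$ forces $y_j\to0$, and $y_0=|Q^{+}_{\vec r,\eta}(y,\bar t)\cap[u<k_0]|/|Q^{+}_{\vec r,\eta}(y,\bar t)|\le\nu^{(1)}_-$ is exactly hypothesis \eqref{eq2.11} (note $Q_0=Q^{+}_{\vec r,\eta}(y,\bar t)$). Since $k_j\downarrow ak_0$ and $Q_j\downarrow K_{\vec r/2}(y)\times(\bar t,\bar t+\eta)$, $\lim_j y_j=0$ yields $u\ge ak_0$ a.e.\ on $K_{\vec r/2}(y)\times(\bar t,\bar t+\eta)$, which is \eqref{eq2.12}. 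The one genuinely new point over Lemma~\ref{lem2.4}, and the step I would single out as the crux, is the choice of a time-independent cutoff so that \eqref{eq2.10} annihilates the initial boundary term in \eqref{eq2.3}; this is what lets the smallness requirement be imposed only on the forward cylinder rather than on a two-sided one. The remaining work, namely the $v=\max\{u,ak_0\}$ device that absorbs the doubly nonlinear weight $u^{(m_i-m^-)(p_i-1)}$ and the bookkeeping of the intrinsic powers of $k_0$ and $r_i$ so that the iteration constant collapses to exactly $\nu^{(1)}_-$, is routine once the template of Lemma~\ref{lem2.4} is in hand.
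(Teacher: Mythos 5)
Your proposal is correct and follows essentially the same argument as the paper: the paper also runs the De Giorgi iteration over the forward cylinders $Q_j=K_j\times(\bar t,\bar t+\eta)$ with time-independent cutoffs $\zeta_j(x)$ and levels $k_j^{m^-}=(ak_0)^{m^-}+(1-a^{m^-})k_0^{m^-}2^{-j}$, uses \eqref{eq2.10} to kill the initial-time term in \eqref{eq2.3}, introduces $v=\max\{u,ak_0\}$ to absorb the doubly nonlinear weight, and closes via H\"older and the embedding \eqref{PS} to reach exactly the recursion you display, with \eqref{eq2.11} being the corresponding smallness threshold. You also correctly identify the single genuinely new point relative to Lemma \ref{lem2.4}, namely that the time-independent cutoff removes the $|\zeta_t|$ contribution and lets the initial datum alone replace the parabolic boundary control.
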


\vspace{.2cm}

\noindent Let as now consider $u$ to be a non-negative, locally bounded, local weak sub-solution to \eqref{eq1.1}-\eqref{eq1.2} in $\Omega_T$. Consider the numbers $\mu^+$ and $\omega$ as in \eqref{boundsubsol}, and assume additionally that, for a certain time level $\bar{t} \in (0,T)$ and for some $\xi_0 \in (0,1)$,
\begin{equation}\label{eq2.13}
[u(x, \bar{t})]^{m^-}\leqslant [\mu^+]^{m^-}-[\xi_0 \omega]^{m^-} \quad \text{for}\quad x\in K_{\vec{r}}(y) \ . 
\end{equation}
 
\noindent By writing the energy inequalities \eqref{eq2.3} for $(u^{m^{-}}-k_j^{m^-})_{+} \zeta_j(x)^{p^+}$ over the cylinder $Q_j=K_j\times (\bar{t}, \bar{t} + \eta)$ (the same as before), where 
\[ k_j^{m^-}:=(\mu^{+})^{m^-} -(a\xi_0\omega)^{m^-}- (1-a^{m^-})(\xi_0\omega)^{m^-}\,2^{-j} , \quad 0<a<1  , \]
and $\zeta_j(x)$ is a time independent cutoff function and redoing  the arguments of the previous proof of \eqref{eq2.7}, we arrive at 
\begin{multline*}
y_{j+1}\leqslant \gamma 2^{j\gamma}\, a^{-\gamma}\,\Big(\frac{[\mu^{+}]^{m^{-}}}{[\mu^{+}]^{m^{-}}-[\xi_0\omega]^{m^{-}}}\Big)^{\frac{(1-m^{-})p}{m^{-}(N+2/\alpha)}}\Big(\frac{\eta}{[\xi_0 \omega]^{1-\lambda_{+}}}\Big)^{\frac{p}{N+2/\alpha}}\Big(\prod_{i=1}^N \frac{r_i}{[\xi_0 \omega]^{\frac{\lambda_i-\lambda_{+}}{p_i}}}\Big)^{\frac{p}{N+2/\alpha}}\\
\times  \left( \frac{\mu^+}{\xi_0 \omega}\right)^{(1-m^-)\frac{N}{N+2/\alpha}}\Big[\sum\limits_{i=1}^N \frac{[\xi_0 \omega]^{\lambda_i-\lambda_{+}}}{r_{i}^{p_i}}\big[\frac{\mu^{+}}{\xi_0 \omega}\big]^{\lambda_i-m^-(p_i-2)-1}\Big]^{\frac{p+N}{N+2/\alpha}} y_j^{1+\frac{p}{N+2/\alpha}}, 
\end{multline*}
for $y_j:=\dfrac{|Q_j\cap[ u> k_j]|}{|Q_j|}$. Once we choose 
\begin{multline*}
\nu^{(1)}_{+}:= 
\frac{a^{\gamma}}{\gamma}\Big(\frac{[\mu^{+}]^{m^{-}}-[\xi_0\omega]^{m^{-}}}{[\mu^{+}]^{m^{-}}}\Big)^{\frac{1-m^{-}}{m^{-}}}
\frac{[\xi_0 \omega]^{1-\lambda_{+}}}{\eta}\Big(\prod_{i=1}^N \frac{r_i}{[\xi \omega]^{\frac{\lambda_i-\lambda_{+}}{p_i}}}\Big)^{-1} \\\times \left( \frac{\xi_0 \omega}{\mu^+}\right)^{(1-m^-)\frac{N}{p}} \Big[\sum\limits_{i=1}^N \frac{[\xi_0 \omega]^{\lambda_i-\lambda_{+}}}{r_{i}^{p_i}}\big[\frac{\mu^{+}}{\xi_0 \omega}\big]^{\lambda_i-m^-(p_i-2)-1}\Big]^{-\frac{p+N}{p}}
\end{multline*}
we have \begin{equation}\label{eq2.14}
\frac{|Q^+_{\vec{r}, \eta}(y, \bar{t}) \cap [u> [\mu^{+}]^{m^{-}}-[\xi_0\omega]^{m^{-}}]|}{|Q^+_{\vec{r}, \eta}(y, \bar{t})|}\leqslant \nu^{(1)}_{+}
\end{equation} and therefore $\lim\limits_{j\rightarrow \infty}y_j=0$. We have just proved

\begin{lemma}\label{lem2.6}
Let $u$ be a non-negative, locally bounded, local weak sub-solution to \eqref{eq1.1}, \eqref{eq1.2}, and take $0<a<1$. Assume that $\xi_0$ is such that \eqref{eq2.13} and \eqref{eq2.14} hold, then
\begin{equation}\label{eq2.15}
[u(x,t)]^{m^-}\leqslant [\mu^+]^{m^-}-[a\,\xi_0 \omega]^{m^-},\quad (x,t)\in K_{\frac{\vec{r}}{2}}(y)\times (\bar{t}, \bar{t}+\eta).
\end{equation}
\end{lemma}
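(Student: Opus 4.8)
The plan is to run the De Giorgi iteration of Lemma~\ref{DGsubsol} essentially verbatim, the only structural change being that the cylinders are now anchored at the time level $\bar{t}$ and the cutoff functions carry no time dependence. Concretely, set $K_j:=K_{\vec r_j}(y)$ with $\vec r_j:=\frac{\vec r}{2}(1+2^{-j})$, $Q_j:=K_j\times(\bar{t},\bar{t}+\eta)$, pick time-independent cutoffs $\zeta_j=\zeta_j(x)\in C^1_0(K_j)$ with $\zeta_j\equiv 1$ on $K_{j+1}$ and $|(\zeta_j)_{x_i}|\le\gamma 2^j/r_i$, and take the increasing levels $k_j^{m^-}:=(\mu^+)^{m^-}-(a\xi_0\omega)^{m^-}-(1-a^{m^-})(\xi_0\omega)^{m^-}2^{-j}$, so that $k_0^{m^-}=(\mu^+)^{m^-}-(\xi_0\omega)^{m^-}$ and $k_j^{m^-}\uparrow(\mu^+)^{m^-}-(a\xi_0\omega)^{m^-}$. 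With $y_j:=|Q_j\cap[u>k_j]|/|Q_j|$, the assertion \eqref{eq2.15} is exactly the statement that $y_j\to 0$, and note that $Q_0=Q^+_{\vec r,\eta}(y,\bar{t})$, so the hypothesis \eqref{eq2.14} says precisely $y_0\le\nu^{(1)}_+$.

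First I would test the energy estimate \eqref{eq2.3} over $Q_j$ with $\varphi=(u^{m^-}-k_j^{m^-})_+\,\zeta_j^{p_{+}}$. The single place where hypothesis \eqref{eq2.13} enters is that $k_j^{m^-}\ge k_0^{m^-}=(\mu^+)^{m^-}-(\xi_0\omega)^{m^-}$ for all $j\ge 0$, so on $K_{\vec r}(y)\supset K_j$ one has $(u^{m^-}(\cdot,\bar{t})-k_j^{m^-})_+\equiv 0$, whence the initial slice $\int_{K_j\times\{\bar{t}\}}g_+(u^{m^-},k_j^{m^-})\,\zeta_j^{p_{+}}\,dx$ in \eqref{eq2.3} vanishes; and since $\zeta_j$ is time-independent there is no $|\zeta_t|$-contribution either, so the right-hand side of \eqref{eq2.3} collapses to the purely elliptic term $\gamma\,2^{jp_{+}}\sum_i r_i^{-p_i}\iint_{Q_j}u^{(m_i-m^-)(p_i-1)}(u^{m^-}-k_j^{m^-})_+^{p_i}\,dx\,dt$. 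From here the estimate proceeds word for word as in the proof of Lemma~\ref{DGsubsol}: bound this term from above using $u\le\mu^+$ on $[u>k_j]$; bound the sup-in-time term from below by $\big([\mu^+]^{m^-}-[\xi_0\omega]^{m^-}\big)^{(1-m^-)/m^-}(u^{m^-}-k_j^{m^-})_+^2$ and the gradient terms from below by $\alpha_i^{-p_i}\big|\big((u^{m^-}-k_j^{m^-})_+^{\alpha_i}\big)_{x_i}\big|^{p_i}$ with $\alpha_i=\frac{\lambda_i+m^-}{m^-p_i}$; and apply H\"older's inequality together with the anisotropic embedding Lemma~\ref{embedding} (with $\alpha=\frac1N\sum_i\alpha_i$, $\theta=p/p^{*}$, $\sigma=2$). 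This produces exactly the recursion for $y_j$ displayed immediately above the statement; the decisive difference from Lemma~\ref{DGsubsol} is that, because the initial slice vanished, the bracket raised to the power $\frac{p+N}{N+2/\alpha}$ contains only the elliptic part of $H_1$ and no $[\xi_0\omega]^{1-\lambda_{+}}/\eta$ summand, which is precisely why $\nu^{(1)}_+$ in \eqref{eq2.14} differs in form from $\nu_+$ in \eqref{eq2.9}.

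Finally I would close the argument with the usual fast-geometric-convergence lemma for sequences satisfying $y_{j+1}\le C\,b^{\,j}\,y_j^{1+\beta}$ with $b>1$, $\beta=p/(N+2/\alpha)$: the number $\nu^{(1)}_+$ is chosen in \eqref{eq2.14} to be exactly the threshold $C^{-1/\beta}b^{-1/\beta^2}$ associated with the constant $C$ multiplying $y_j^{1+\beta}$ in that recursion (after absorbing the $2^{j\gamma}$ factor), so that $y_0\le\nu^{(1)}_+$ forces $y_j\downarrow 0$; letting $j\to\infty$ yields $u^{m^-}\le(\mu^+)^{m^-}-(a\xi_0\omega)^{m^-}$ a.e. in $K_{\vec r/2}(y)\times(\bar{t},\bar{t}+\eta)$, which is \eqref{eq2.15}. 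I expect the only genuinely delicate bookkeeping to be tracking the $(1-m^-)$-powers (coming from the two-sided control of $g_+$) and the intrinsic-scaling powers of $\xi_0\omega$, $\mu^+$, $\eta$ and the $r_i$ through the H\"older--Sobolev step so that the elliptic part of $H_1$ and the ratio $\mu^+/(\xi_0\omega)$ emerge with exactly the exponents recorded in $\nu^{(1)}_+$; but all of this is a transcription of the corresponding passage in the proof of Lemma~\ref{DGsubsol}, the present statement being merely its ``initial-data'' variant in which the sup-in-time control is supplied by \eqref{eq2.13} rather than by a backward time cutoff.
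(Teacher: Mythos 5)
Your proposal is correct and mirrors the paper's argument exactly: the paper likewise runs the De Giorgi iteration of Lemma~\ref{DGsubsol} over $Q_j=K_j\times(\bar t,\bar t+\eta)$ with the same levels $k_j^{m^-}$ and time-independent cutoffs, uses \eqref{eq2.13} to annihilate the initial slice, observes that the $|\zeta_t|$ contribution vanishes so only the elliptic part of $H_1$ survives in the bracket raised to the $(p+N)/(N+2/\alpha)$ power, and chooses $\nu^{(1)}_+$ as the resulting threshold so that \eqref{eq2.14} gives $y_0\le\nu^{(1)}_+$ and hence $y_j\to 0$.
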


\section{Expansion of Positivity}\label{Sec.4}

\noindent In what follows we prove, separately for the degenerate and singular cases, results on the expansion of positivity - a property of non-negative super-solutions to (elliptic) and parabolic PDEs - stating that the information on the measure of the positivity set of $u$, located at a certain time level, over the cube $K_r^k(y)$ is expanded to a wider domain, both in space and time. These results are obtained assuming 
\begin{equation}\label{eq4.1}
\lambda_+-\lambda_-\leqslant \epsilon_*,
\end{equation}
for some $\epsilon_* \in (0,1)$ to be determined along the proofs depending only on the data.

\vspace{.2cm}

\noindent Through out this section we will consider $u$ to be a non-negative, local weak super-solution to \eqref{eq1.1}-\eqref{eq1.2} in $\Omega_T$.

\subsection{The Degenerate Case}

Assume that \eqref{eq4.1} holds, for $\epsilon_*$ to determined, and that $\lambda_+>1$, so we are considering the doubly nonlinear anisotropic PDE \eqref{eq1.1} to be degenerate. In this setting, the main result concerning the spreading of positivity of $u$ is the following
\begin{theorem}\label{th4.1}
Let $k, r >0$ and $\alpha_0 \in (0,1)$ and assume that
\begin{equation}\label{eq4.2}
|K^{k}_r(y)\cap [u(\cdot, s)\geqslant k]|\geqslant \alpha_0 |K^{k}_r(y)|.
\end{equation}
Then there exist constants $\epsilon_{*}\in (0,1)$, $C_{*}>1$ and $B_1>1$, depending only on the data and $\alpha_0$, such that
\begin{equation}\label{eq4.3}
u(x, t)\geqslant \frac{k}{C_{*}},\quad x\in K^{k/C_{*}}_{2r}(y),
\end{equation}
and for all $t$
\begin{equation}\label{eq4.4}
s+B_1\,r^{p_{+}}\,\,k^{1-\lambda_{+}}\leqslant t\leqslant s+\tilde{B_1} \,r^{p_{+}}\,\,k^{1-\lambda_{+}}.
\end{equation}
\end{theorem}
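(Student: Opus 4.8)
The plan is to follow the classical three-step schema for expansion of positivity, adapted to the doubly nonlinear anisotropic setting via the exponential-shift transformation.

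\emph{Step 1 — Measure shrinking / time propagation.} Starting from the measure-theoretic information \eqref{eq4.2} at the time level $s$, I would first use the energy estimate of Lemma \ref{lem2.3} (tested against $(u^{m^-}-k^{m^-})_-\zeta^{p_+}$ on a cylinder of the form $K^k_{2r}(y)\times(s,s+\eta)$ with $\eta\sim r^{p_+}k^{1-\lambda_+}$) to transfer the positivity of $u$ forward in time: one shows that, for a suitable choice of the time-length $\eta = \delta\,r^{p_+}k^{1-\lambda_+}$, the set where $u(\cdot,t)$ is comparable to $k$ still occupies a fixed fraction $\alpha_0/2$ (say) of $K^k_{2r}(y)$ for all $t\in(s,s+\eta)$. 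This is the standard "the positivity set does not shrink too fast" argument, and the intrinsic geometry of $K^k_r$ is designed precisely so that the diffusion runs at unit speed in these scaled variables. The condition $\lambda_+-\lambda_-\le\epsilon_*$ enters here to keep the anisotropic products $\prod_i r_i$ and the sums $\sum_i r_i^{-p_i}$ in the denominators of the $\nu$'s (cf. \eqref{eq2.8}, \eqref{eq2.11}) balanced, so that all quantities depending on the spread of the $\lambda_i$ stay within a controlled range.

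\emph{Step 2 — Exponential shift and De Giorgi iteration.} On each time slice I would then apply the local clustering Lemma \ref{lem2.2} to locate a small intrinsic subcube where $u\ge\xi k$ on a large measure fraction (close to $1$). This is the point where the exponential-shift device of \cite{Acta} is invoked: working with the transformed unknown (roughly $u$ multiplied by an exponential weight in time, tuned to absorb the factor $t^{-\bar\lambda}$), the degenerate/singular time factor is neutralized, the elliptic terms dominate, and one is in a position to run the De Giorgi machinery. Concretely, the Variant De Giorgi Lemma with initial data, Lemma \ref{lem2.5} (in the transformed variables), upgrades the pointwise-on-a-set information into a genuine pointwise lower bound $u\ge\xi k/2$ on a full intrinsic subcube for a further time interval. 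Then Lemma \ref{lem2.5} again (a "spreading" step) enlarges the spatial cube from $K^k_{\bar r}$ to $K^{k/C_*}_{2r}(y)$ while lowering the constant from $\xi k/2$ to $k/C_*$, at the cost of waiting a definite amount of time — which produces the lower endpoint $s+B_1 r^{p_+}k^{1-\lambda_+}$ and the upper endpoint $s+\tilde B_1 r^{p_+}k^{1-\lambda_+}$ in \eqref{eq4.4}. The degenerate hypothesis $\lambda_+>1$ ensures the waiting times are genuine (positive) intervals and fixes the shape of the final time range.

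\emph{Step 3 — Bookkeeping of constants.} Finally I would collect the dependence: $\epsilon_*$ is chosen small enough that every intermediate $\nu_\pm$ in Lemmas \ref{lem2.4}--\ref{lem2.6} stays bounded below away from $0$ uniformly in the $\lambda_i$-spread; $C_*$, $B_1$, $\tilde B_1$ then depend only on $N$, the $p_i$, the $m_i$, $K_1$, $K_2$ and $\alpha_0$, not on $k$ or $r$ (scale invariance of the intrinsic geometry).

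\emph{Main obstacle.} The hard part will be Step 2: making the exponential-shift transformation compatible with the \emph{anisotropic} structure \eqref{eq1.2}. In the isotropic doubly nonlinear case the shift cleanly rebalances a single time power, but here each direction carries its own $\lambda_i$, so the transformed equation acquires direction-dependent lower-order terms whose size must be controlled — this is exactly why one cannot avoid the smallness requirement $\lambda_+-\lambda_-\le\epsilon_*$, and why that bound comes out only qualitatively. Getting the De Giorgi iteration (the analogue of the recursion for $y_j$ in the proof of Lemma \ref{lem2.4}) to close \emph{uniformly} in this anisotropic, shifted setting — i.e. showing the perturbation terms are a small multiple of the good energy terms — is the technical crux of the argument.
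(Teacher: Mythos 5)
Your Step 1 matches the paper's Lemma \ref{lem4.1} in spirit (energy estimate used to propagate the measure information forward in time on a larger intrinsic cube), although you understate the crucial feature of the paper's version: the measure information is preserved at a \emph{family} of descending levels $\epsilon k e^{-\tau}$ on time intervals of length $\delta\,r^{p_+}(e^{\tau}/k)^{\lambda_+-1}$, indexed by $\tau\in(0,\tau_*]$, and it is precisely this one-parameter family that makes the subsequent exponential-shift change of variables $w(z,\tau)=\frac{e^\tau}{k}u(\cdot,\,s+\delta(e^\tau/k)^{\lambda_+-1}r^{p_+})$ coherent with the degenerate time scaling. Your Step 3 bookkeeping is fine.

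Step 2, however, has a genuine gap. You propose to apply the local clustering Lemma \ref{lem2.2} on a time slice to locate a \emph{small} intrinsic subcube $K_{\bar r}(\bar x)\subset K^{k}_r(y)$ where the density of $[u\geq \xi k]$ is close to $1$, and then to use Lemma \ref{lem2.5} to convert this to a pointwise bound and ``enlarge the spatial cube from $K^{k}_{\bar r}$ to $K^{k/C_*}_{2r}(y)$.'' But Lemma \ref{lem2.5} is a De Giorgi-type lemma with initial data: it propagates a pointwise lower bound \emph{forward in time} while \emph{shrinking} the spatial cube by a factor of $2$. It provides no mechanism at all for enlarging the spatial cube from $\bar r\sim\delta\beta^2(1-\xi)\nu/\mathcal{K}\cdot r$ to $2r$. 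That spatial spreading \emph{is} the expansion of positivity — it is exactly what the theorem is supposed to establish, so invoking it there is circular.

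The paper avoids this difficulty by never passing to a small subcube at all, and in fact never invokes Lemma \ref{lem2.2}. After the exponential shift, it works on the full transformed cube $K^{1/C_*}_4(0)$ and proves (Lemma \ref{lem4.2}) that the measure of the sub-level set $[w\leq \epsilon/2^{j_*}]$ can be made an arbitrarily small fraction $\nu$ of the cylinder $Q_*$ by increasing $j_*$, i.e.\ by lowering the threshold. The engine for this is the De Giorgi–Poincar\'e discrete isoperimetric inequality applied slice-wise and then integrated in $\tau$, using the measure information from Lemma \ref{lem4.1} together with energy estimates for $w$ that keep the $e^{\tau(\lambda_+-\lambda_i)}$ factors under control (this is where \eqref{eq4.11} and the smallness $\epsilon_*\leq p_-/\ln C_*$ enter). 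Only once the complement has small measure on the full cube does the paper run a critical-mass De Giorgi iteration (the analogue of Lemma \ref{lem2.4}, written for $w$ on $Q_*$) to conclude $w\geq k_{j_*}/2$ on $K^{1/C_*}_2(0)\times[3\tau_*/4,\tau_*]$, which scales back to $u\geq k/C_*$ on $K^{k/C_*}_{2r}(y)$ in the stated time window. In short: what is needed is not clustering-plus-spreading but level-lowering-plus-measure-shrinking, and that is the piece missing from your Step 2.
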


\noindent The proof of Theorem \ref{th4.1} relies on several other results. Roughly speaking, from \eqref{eq4.2} one shows that something similar happens in a larger cube, for all time levels from $s$ to $s+\eta$, where $\eta$ depends on $r, k$ and some other small numbers (Lemma \ref{lem4.1}). From that, one finds a level $k_*$ for which the portion of a cylinder where $u<k_*$ ($u$ is close to zero) can be made arbitrarily small (Lemma \ref{lem4.2}). By a DeGiorgi argument, that smallness is fixed, therefore a bound from below is obtained.

\vspace{.2cm}

\noindent Let \eqref{eq4.2} be in force. For the time being, consider $C_{*}>1$ fixed (it   will be specified later, depending only on the data and $\alpha_0$). By noting that $K^{k}_{r}(y) \subset K^{k/C_{*}}_{4 r}(y)$  and considering 
\begin{equation}\label{eq4.6}
\epsilon_{*}\leqslant \frac{p \ln 4}{N \ln C_{*}},
\end{equation}
one has 
\begin{eqnarray*}
     |K^{k}_{r}(y)|& =& \displaystyle{|K^{k/C_{*}}_{4 r}(y)| \ 4^{- p^+ N/p} \ C_{*}^{-\sum_{i=1}^N (\lambda_+-\lambda_i)/{p_i}}}\geq |K^{k/C_{*}}_{4 r}(y)| \ 4^{- p^+ N/p } \ C_{*}^{-\epsilon_{\star}N/p} \\
     & \geq &|  K^{k/C_{*}}_{4 r}(y)| \ 4^{- p^+ N/p -1 } \ , 
\end{eqnarray*}
therefore the measure information \eqref{eq4.2} yields
\begin{equation}\label{eq4.5}
|K^{k/C_{*}}_{4 r}(y)\cap [ u(\cdot, s)> k]|\geqslant \frac{\alpha_0}{4^{\frac{N p_{+}}{p} +1}} |K^{k/C_{\star}}_{4r}(y)|.
\end{equation}
\begin{lemma}\label{lem4.1}
Assume \eqref{eq4.5} holds. There exist $\delta$, $\epsilon \in (0,1)$, depending only on the data
and $\alpha_0$, such that for any $0< \tau \leqslant \tau_{*}\leqslant \ln C_{*}$ 
\begin{equation}\label{eq4.7}
|K^{k/C_{*}}_{4r}(y)\cap [ u(\cdot, t)> \epsilon k e^{-\tau}] |\geqslant \frac{\alpha^2_0}{16^{\frac{N p_{+}}{p}+2}} |K^{k/C_{*}}_{4r}(y)|, \qquad \forall s<t \leq s+\eta
\end{equation}
 where  $\eta:=\delta r^{p_{+}} \Big(\frac{e^{\tau}}{k}\Big)^{\lambda_{+}-1}.$
\end{lemma}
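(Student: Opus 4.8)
The plan is to combine the time-propagation of a measure-theoretic lower bound (a De Giorgi iteration in the *time* variable) with a suitable exponential-shift change of variables that tames the degeneracy of the equation. First I would fix the exponential shift: set $v := u\,e^{t/(\text{something})}$ — more precisely, one looks for a transformation of the form $w = u\,e^{\beta(t-s)}$ (with $\beta$ to be chosen of order $k^{1-\lambda_+}r^{-p_+}$ up to the parameter $\tau$) so that $w$ satisfies a perturbed version of \eqref{eq1.1}-\eqref{eq1.2} in which the genuinely degenerate/singular time factor is absorbed and the elliptic terms control the evolution. This is exactly the device of \cite{Acta} alluded to in the Method subsection. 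In the shifted variable, the hypothesis \eqref{eq4.5} at $t = s$ reads as a measure lower bound on a super-level set of $w(\cdot, s)$ in the intrinsic cube $K^{k/C_*}_{4r}(y)$.

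Next I would derive the logarithmic (or, in the anisotropic doubly-nonlinear context, a De Giorgi-type) energy estimate from Lemma \ref{lem2.3} specialized to the cylinder $K^{k/C_*}_{4r}(y)\times(s, s+\eta)$, testing against truncations $(w^{m^-}-k_\ell^{m^-})_-$ at a fixed level $k_\ell \sim \epsilon k e^{-\tau}$, with a purely spatial cutoff $\zeta(x)$ equal to $1$ on $K^{k/C_*}_{4r}(y)$ and supported in a slightly larger cube (here one uses again that $K^{k/C_*}_{4r} \subset K^{k/C_*}_{8r}$, with a volume loss controlled by \eqref{eq4.6}). The key point is that the choice $\eta = \delta r^{p_+}(e^\tau/k)^{\lambda_+-1}$ makes the parabolic term $\int g_- |\zeta_t|$ and the anisotropic elliptic terms $\sum_i r_i^{-p_i}\iint u^{(m_i-m^-)(p_i-1)}(\cdots)_-^{p_i}$ balance at the intrinsic scale; with $\lambda_+ - \lambda_- \le \epsilon_*$ small, all the mismatched powers $k^{\lambda_i-\lambda_+}$ and the factors $(e^\tau/k)^{\lambda_i-\lambda_+}$ are within a fixed multiplicative constant of $1$, uniformly for $0<\tau\le\tau_*\le\ln C_*$, so the estimate closes with constants depending only on the data and $\alpha_0$.

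From that energy inequality the conclusion is a standard "the bad set stays small for all times" argument: if for some $t_0 \in (s, s+\eta]$ the measure of $[w(\cdot,t_0) \le \epsilon k e^{-\tau}]$ inside $K^{k/C_*}_{4r}(y)$ were to exceed the threshold $\tfrac{\alpha_0^2}{16^{Np_+/p+2}}|K^{k/C_*}_{4r}(y)|$, then integrating the differential inequality for $t \mapsto |[w(\cdot,t) \le \epsilon k e^{-\tau}] \cap K^{k/C_*}_{4r}(y)|$ backward to $t = s$ — using that the spatial energy term is controlled and hence the time derivative of this measure is bounded by a constant times $\eta^{-1}$ times the measure of a slightly larger level set — would contradict \eqref{eq4.5}, provided $\epsilon$ and $\delta$ are chosen small enough (and $\epsilon_*$ accordingly, via \eqref{eq4.6}). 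Unwinding the exponential shift, $w \ge \epsilon k e^{-\tau}$ translates into $u \ge \epsilon k e^{-\tau}$ up to adjusting $\epsilon$, which is \eqref{eq4.7}.

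The main obstacle I anticipate is the bookkeeping of the exponential shift against the anisotropy: one must verify that, after multiplying $u$ by $e^{\beta(t-s)}$, the structure conditions \eqref{eq1.2} are preserved with merely a lower-order (integrable, small) perturbation, \emph{and} that the resulting lower-order terms can be absorbed uniformly over the whole range $0<\tau\le\tau_*\le\ln C_*$ — this is where the smallness requirement $\lambda_+-\lambda_-\le\epsilon_*$ together with $\tau_*\le\ln C_*$ is genuinely used, since it forces $e^{\tau(\lambda_+-\lambda_-)} \le C_*^{\epsilon_*}$, a constant that can be made as close to $1$ as needed. The anisotropic Sobolev embedding (Lemma \ref{embedding}) enters only in the final De Giorgi step to convert the energy bound into a measure decay, exactly as in the proof of Lemma \ref{lem2.4}.
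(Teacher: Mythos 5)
Your overall picture of Section~\ref{Sec.4} is not wrong, but you have misassigned the tools to this particular lemma, and your proposed argument for \eqref{eq4.7} does not match the (much simpler) mechanism actually at work.

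The paper's proof of Lemma~\ref{lem4.1} does \emph{not} involve an exponential-shift change of variables. The quantity $e^{-\tau}k$ appears only as a (fixed) truncation level: one writes the energy estimate \eqref{eq2.3} on $K^{k/C_*}_{4r}(y)\times(s,s+\eta]$ for the test function $(u^{m^-}-(e^{-\tau}k)^{m^-})_-\,\zeta(x)^{p_+}$, with a purely spatial cutoff that shrinks \emph{inward} to $\bar K^{k/C_*}_{4r}(y)$ (a $(1-\sigma)$-shrunk cube), not outward to $K^{k/C_*}_{8r}$ as you suggest. No De Giorgi iteration, no logarithmic estimate, no Sobolev embedding, and no ``differential inequality for the measure integrated backward in time'' enters; the time-sup term on the left of \eqref{eq2.3} is simply bounded below by the measure of the sub-level set $[u(\cdot,t)<\epsilon e^{-\tau}k]$ (via the lower bound for $g_-$ on that set), while the right-hand side is controlled by the initial datum \eqref{eq4.5} (via the upper bound for $g_-$) plus the spatial term, which is small because of the intrinsic choice of $\eta$ and because $\sum_i(e^\tau/C_*)^{\lambda_+-\lambda_i}\le N$ whenever $\tau\le\ln C_*$ (this uses only $\lambda_+\ge\lambda_i$, not the smallness $\lambda_+-\lambda_-\le\epsilon_*$; the latter is spent earlier in passing from \eqref{eq4.2} to \eqref{eq4.5}). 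Choosing $\epsilon,\sigma,\delta$ small in the right order closes the estimate in one stroke.

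The concrete gap in your proposal is the ``contradiction via a differential inequality for $t\mapsto|[w(\cdot,t)\le\epsilon ke^{-\tau}]|$, integrated backward to $t=s$''. That object is not what \eqref{eq2.3} produces and it is not straightforward to make rigorous; what \eqref{eq2.3} gives is a bound on $\sup_{s\le t\le s+\eta}\int g_-(u^{m^-},(e^{-\tau}k)^{m^-})\zeta^{p_+}\,dx$, which already encodes the uniform-in-$t$ measure bound directly. Similarly, setting up a genuine exponential shift $w=u\,e^{\beta(t-s)}$ and re-verifying the structure conditions \eqref{eq1.2} for $w$ would be circular here: that rescaling (and the resulting structure \eqref{newstructcond}) is precisely what the paper introduces \emph{after} Lemma~\ref{lem4.1}, in order to prove Lemma~\ref{lem4.2} and run the final De Giorgi step; Lemma~\ref{lem4.1} is the input to that rescaling, not a consequence of it. So while your proposal correctly identifies the ingredients of the section, it imports the heavier ones (exponential shift, Sobolev embedding, De Giorgi iteration) into a lemma whose actual proof is a single energy estimate with a spatial cutoff.
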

\begin{proof}
For fixed $\sigma \in (0, 1)$ set 
\[ \bar{K}^{k/C_*}_{4r}(y):=\{x: |x_i-y_i|\leqslant (1-\sigma) (4r)^{\frac{p_{+}}{p_i}} \Big(\dfrac{C_*}{k}\Big)^{\frac{\lambda_{+}- \lambda_i}{p_i}},\quad i=1,..., N\}  \subset K^{k/C_{*}}_{4r}(y) \ . \]
Take $\tau >0$ and consider the energy estimates \eqref{eq2.3} written over the cylinder $K^{k/C_{*}}_{4r}(y) \times (s, s+\eta]$ for the test functions $(u^{m^-}- (e^{-\tau} k)^{m^-} )_{-}\xi(x)^{p^+}$, where $\zeta(x)$ is a time-independent piecewise smooth cutoff function satisfying
\[ \quad 0 \leq \zeta \leq 1 \ \ \mbox{in} \ \ K^{k/C_{*}}_{4r}(y)  , \quad \zeta= 1 \ \  \mathrm{in} \ \  \bar{K}^{k/C_*}_{4r}(y) \quad \mbox{and} \quad |\zeta_{x_i}|\leqslant \dfrac{\sigma^{-1}}{(4r)^{\frac{p_{+}}{p_i}}}\Big(\dfrac{k}{C_*}\Big)^{\frac{\lambda_+-\lambda_i}{p_i}}, i=1, ..., N. \]
Hence \\
$ \displaystyle{\sup\limits_{s\leqslant t\leqslant s+\eta}\int\limits_{\bar{K}^{ k/C_{*}}_{4r}(y)} g_{-}(u^{m-}, (e^{-\tau}k)^{m^{-}}) \,\,dx \leq \int\limits_{K^{k/C_{*}}_{4r}(y)\times\{s\}} g_{-}(u^{m-}, (e^{-\tau}k)^{m^{-}}) \,\,dx }$
\[ + \frac{\gamma}{r^{p_{+}}\sigma^{p_{+}}} \sum\limits_{i=1}^{N}\big(k/C_{*}\big)^{\lambda_{+}-\lambda_i}\int\limits_s^{s+\eta}\!\!\int\limits_{K^{k/C_{*}}_{4r}(y)}u^{(m_i-m^{-})(p_i-1)}(u^{m^{-}}-(e^{-\tau} k)^{m^{-}})_{-}^{p_{i}}\,dx\,dt \ .\]
Observe that 
\begin{enumerate}
    \item[] $\displaystyle{g_{-}(u^{m-}, (e^{-\tau}k)^{m^{-}}) = m^{-} \int_{u}^{e^{-\tau}k} (s^{m^-} - (e^{-\tau}k)^{m^{-}})_- \ ds } $ is bounded from below, when considering $u<\epsilon k$, and above by
\[ \frac{(m^-)^2}{m^-+1} (e^{-\tau}k)^{m^{-}+1} \left(1- \frac{m^-+1}{m^-} \epsilon \right) \leq g_{-}(u^{m-}, (e^{-\tau}k)^{m^{-}}) \leq \frac{(m^-)^2}{m^-+1} (e^{-\tau}k)^{m^{-}+1} \ . \]
    \item[] $\displaystyle{\frac{\gamma}{r^{p_{+}}\sigma^{p_{+}}} \sum\limits_{i=1}^{N}\big(k/C_{*}\big)^{\lambda_{+}-\lambda_i}\int\limits_s^{s+\eta}\!\!\int\limits_{K^{k/C_{*}}_{4r}(y)}u^{(m_i-m^{-})(p_i-1)}(u^{m^{-}}-(e^{-\tau} k)^{m^{-}})_{-}^{p_{i}}\,dx\,dt } $
   \[ \leq \frac{\gamma}{\sigma^{p_{+}}} (e^{-\tau}k)^{m^{-}+1} | K^{k/C_{*}}_{4r}(y)| \sum\limits_{i=1}^{N}\big(e^\tau/C_{*}\big)^{\lambda_{+}-\lambda_i} \leq  \frac{\gamma}{\sigma^{p_{+}}} (e^{-\tau}k)^{m^{-}+1} | K^{k/C_{*}}_{4r}(y)|  
   \ , \quad  \ \tau \leq \tau_*\leq \ln C_*  \ . \]
\end{enumerate}
Gathering all theses estimates we arrive at 
\begin{equation*}
|K^{k/C_{*}}_{4r}(y)\cap\{u(\cdot, t)<\epsilon e^{-\tau} k\}|\leqslant \Big[N\sigma+ \frac{1-\frac{\alpha_0}{4^{\frac{Np_{+}}{p}+1}}}{1-\frac{1+m^{-}}{m^{-}}\epsilon}+\frac{\delta\,\gamma}{\sigma^{p_{+}}}\big(1-\frac{1+m^{-}}{m^{-}}\epsilon\big)^{-1}\Big]|K^{k/C_{*}}_{4r}(y)|.
\end{equation*}
The proof is complete once we choose $\epsilon \in (0,1)$ such that \[\dfrac{1}{1-\frac{1+m^{-}}{m^{-}}\epsilon}=1+\dfrac{\alpha_0}{4^{\frac{Np_{+}}{p}+1}} \]
and then choose $\delta$, $\sigma$ such that
$N\sigma+\frac{\delta\,\gamma}{\sigma^{p_{+}}}\left(1+\dfrac{\alpha_0}{4^{\frac{Np_{+}}{p}+1}}\right)=\dfrac{15 \alpha^2_0}{16^{\frac{Np_{+}}{p}+2}}$.
\end{proof}

\noindent The next step gives us information on the existence of a level $k_*$ such that the portion of a proper cylinder $Q_*$, where $u<k_*$, can be made arbitrarily small. For that purpose we introduce the new (unknown) function 
\begin{equation*}
w(z, \tau):=\frac{e^{\tau}}{k}\,u(y_1+z_1 \frac{r^{\frac{p_{+}}{p_1}}}{k^{\frac{\lambda_{+}-\lambda_1}{p_1}}},..., y_N+z_N \frac{r^{\frac{p_{+}}{p_N}}}{k^{\frac{\lambda_{+}-\lambda_N}{p_N}}}, s+ \delta\,\Big(\frac{e^{\tau}}{k}\Big)^{\lambda_{+}-1}r^{p_{+}})
\end{equation*}
which by Lemma \ref{lem4.1} satisfies 
\begin{equation}\label{eq4.8}
|K^{1/C_{*}}_{4}(0)\cap [w(\cdot, \tau)\geqslant \epsilon ] |\geqslant \frac{\alpha^2_0}{16^{\frac{Np_{+}}{p}+2}}|K^{1/C_{*}}_{4}(0)|,\quad\text{for all}\quad 0<\tau\leqslant \tau_{*}\leqslant \ln C_{*},
\end{equation}
and verifies the inequality
\begin{equation}\label{eq4.9}
w_\tau\geqslant \delta\,(\lambda_{+}-1)\Big(\frac{e^{\tau}}{k}\Big)^{\lambda_{+}}\,r^{p_{+}}\, u_t=\delta (\lambda_{+}-1)\sum\limits_{i=1}^N\,
\big(\bar{a}_i(\tau, z, w, D w)\big)_{z_i}
\end{equation}
in $Q:=B^{1/C_{*}}_4(0)\times(0, \tau_{*}]$, where $\tau_{*}>0$ is to be defined, and $\bar{a}_i(\tau, z, w, D w)$ satisfy the conditions
\begin{equation} \label{newstructcond}
\begin{cases}
\sum\limits_{i=1}^N \bar{a}_i(\tau, z, w, D w) w_{z_i}\geqslant K_1 \sum\limits_{i=1}^N e^{\tau(\lambda_{+}-\lambda_i)} w^{(m_i-1)(p_i-1)}|w_{z_i}|^{p_i},\\[.8em]
|\bar{a}_i(\tau, z, w, D w)|\leqslant K_2\,e^{\tau(\lambda_{+}-\lambda_i)}\left(\sum\limits_{j=1}^N w^{(m_i-1)+(m_j-1)(p_j-1)}|w_{z_j}|^{p_j}\right)^{\frac{pi}{p_i-1}}.
\end{cases}
\end{equation}



\noindent Under the hypothesis of Lemma \ref{lem4.1}, the function $w$ defined above satisfies \eqref{eq4.8} and verifies \eqref{eq4.9}-\eqref{newstructcond}. In this (new) setting one has

\begin{lemma}\label{lem4.2}
For any $\nu\in(0,1)$, there exists a positive number $j_{*}$, depending on the data, $\alpha_0$ and $\nu$, such that
\begin{equation}\label{eq4.10}
|Q_{*}\cap\{w\leqslant \frac{\epsilon}{2^{j_{*}}}\}|\leqslant \nu |Q_{*}|,\quad Q_{*}:=K^{1/C_{*}}_{4}(0)\times\left[\frac{\tau_{*}}{2},\tau_{*}\right],\quad \tau_{*}:=\Big(\frac{2^{j_{*}}}{\epsilon}\Big)^{\lambda_{+}-1},
\end{equation}
provided that
\begin{equation}\label{eq4.11}
\tau^{\frac{1}{\lambda_{+}-1}}_{*}\,\,e^{\tau_{*}}\leqslant C_{*}  \quad\text{and}\quad \epsilon_* \leqslant \frac{p_{-}}{\ln C_{*}}.
\end{equation}
\end{lemma}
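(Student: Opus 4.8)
The plan is to carry out a De Giorgi ``shrinking'' iteration on the transformed function $w$, using that \eqref{eq4.8} holds at every time slice $\tau\in(0,\tau_*]$ and that $w$ is a super-solution of \eqref{eq4.9}--\eqref{newstructcond}; the two provisos in \eqref{eq4.11} serve only to turn every anisotropy-- and exponential-shift--induced weight into a constant of the data. Indeed, since $\tau_*^{1/(\lambda_+-1)}=2^{j_*}/\epsilon\geqslant1$, the first proviso forces $e^{\tau_*}\leqslant C_*$, i.e.\ $\tau_*\leqslant\ln C_*$ --- hence \eqref{eq4.8} is valid on all of $(0,\tau_*]$ and $\tau(\lambda_+-\lambda_i)\leqslant\tau_*\epsilon_*\leqslant p_-$ there, so the weights $e^{\tau(\lambda_+-\lambda_i)}$ in \eqref{newstructcond} lie in $[1,e^{p_-}]$. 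The same proviso gives $k_{j_*}^{-1}=\tau_*^{1/(\lambda_+-1)}\leqslant C_*e^{-\tau_*}\leqslant C_*$, so with $k_j:=\epsilon/2^{j}$ one has, for $0\leqslant j\leqslant j_*$, $k_jC_*\geqslant1$, $(k_jC_*)^{\lambda_i-\lambda_+}\in[e^{-p_-},1]$, $C_*^{\lambda_+-\lambda_i}\leqslant C_*^{\epsilon_*}\leqslant e^{p_-}$ and $k_j^{(\lambda_+-\lambda_i)/p_i}\in[e^{-1},1]$; moreover $|K^{1/C_*}_4(0)|$ is pinched between $4^{Np_+/p}$ and $4^{Np_+/p}e^{N}$, and $\tau_*=k_{j_*}^{1-\lambda_+}$.

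Next I would write an energy estimate of the type \eqref{eq2.3} for $w$, with truncation $\varphi=(w^{m^-}-k_j^{m^-})_-\zeta^{p_+}$ over $Q:=K^{1/C_*}_4(0)\times(\tau_*/4,\tau_*]$, using a cutoff $\zeta=\zeta^{(1)}(z)\zeta^{(2)}(\tau)$ with $\zeta^{(1)}\equiv1$ on a slightly smaller box, $\zeta^{(2)}\equiv1$ on $[\tau_*/2,\tau_*]$, $\zeta^{(2)}(\tau_*/4)=0$, $|\zeta^{(2)}_\tau|\leqslant\gamma/\tau_*$. By the previous step all weights collapse into the structural constant; then $g_-(w^{m^-},k_j^{m^-})\leqslant k_j^{m^-+1}$, the identity $(m_i-m^-)(p_i-1)+m^-p_i=\lambda_i+m^-$, the relation $k_j^{\lambda_i+m^-}C_*^{-(\lambda_+-\lambda_i)}=k_j^{\lambda_++m^-}(k_jC_*)^{\lambda_i-\lambda_+}\leqslant k_j^{\lambda_++m^-}$, and $k_j^{m^-+1}\tau_*^{-1}=k_j^{\lambda_++m^-}(k_j/k_{j_*})^{1-\lambda_+}\leqslant k_j^{\lambda_++m^-}$ (here $\lambda_+>1$ and $k_j\geqslant k_{j_*}$ for $j\leqslant j_*$) reduce the right-hand side to a single term:
\begin{equation*}
\sum_{i=1}^{N}\iint_{Q_*}w^{(m_i-m^-)(p_i-1)}\big|\big((w^{m^-}-k_j^{m^-})_-\big)_{z_i}\big|^{p_i}\,dz\,d\tau\;\leqslant\;\gamma\,k_j^{\lambda_++m^-}\,\big|Q\cap[w<k_j]\big|,
\end{equation*}
$\gamma$ depending only on the data and --- through $\epsilon,\delta$ of Lemma~\ref{lem4.1} and $\alpha_*:=\alpha_0^2/16^{Np_+/p+2}$ --- on $\alpha_0$.

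For a.e.\ $\tau\in[\tau_*/2,\tau_*]$ I would then apply an anisotropic De Giorgi isoperimetric inequality to $w^{m^-}(\cdot,\tau)$ on $K^{1/C_*}_4(0)$ between $k_{j+1}^{m^-}<k_j^{m^-}$; since \eqref{eq4.8} gives $|K^{1/C_*}_4(0)\cap[w(\cdot,\tau)\geqslant k_j]|\geqslant\alpha_*|K^{1/C_*}_4(0)|$, it controls $(k_j^{m^-}-k_{j+1}^{m^-})|K^{1/C_*}_4(0)\cap[w(\cdot,\tau)<k_{j+1}]|$ by $\tfrac{\gamma}{\alpha_*}\sum_i\int_{[k_{j+1}<w(\cdot,\tau)<k_j]}|(w^{m^-})_{z_i}|\,dz$. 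On $[k_{j+1}<w<k_j]$ one has $(w^{m^-})_{z_i}=((w^{m^-}-k_j^{m^-})_-)_{z_i}$ and $w^{(m_i-m^-)(p_i-1)}\geqslant(k_j/2)^{(m_i-m^-)(p_i-1)}$; integrating in $\tau$, applying H\"older per direction, inserting the energy bound, using $|Q\cap[w<k_j]|\leqslant\gamma\tau_*$ from \eqref{eq4.8} and $k_j^{m^-}-k_{j+1}^{m^-}=\gamma^{-1}k_j^{m^-}$, \emph{all powers of $k_j$ cancel}; after normalising $\tilde Y_j:=|Q_*\cap[w<k_j]|/|Q_*|\in[0,1]$ (which also absorbs the $\tau_*$-factors, since $\tau_*^{1/p_i}|Q_*|^{-1/p_i}$ is a constant of the data by $|Q_*|=\tfrac12|K^{1/C_*}_4(0)|\tau_*$) one is left with the autonomous recursion $\tilde Y_{j+1}\leqslant\gamma\sum_i(\tilde Y_j-\tilde Y_{j+1})^{1-1/p_i}\leqslant\gamma N(\tilde Y_j-\tilde Y_{j+1})^{1-1/p_-}$. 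Raising to the power $p_-/(p_--1)$ and summing over $j=0,\dots,j_*-1$, the telescoping $\sum_j(\tilde Y_j-\tilde Y_{j+1})\leqslant1$ and the monotonicity $\tilde Y_{j+1}\geqslant\tilde Y_{j_*}$ yield $j_*\,\tilde Y_{j_*}^{p_-/(p_--1)}\leqslant\gamma$, i.e.\ $\tilde Y_{j_*}\leqslant\gamma\,j_*^{-(p_--1)/p_-}$; choosing $j_*=j_*(\textrm{data},\alpha_0,\nu)$ so large that $\gamma\,j_*^{-(p_--1)/p_-}\leqslant\nu$ proves \eqref{eq4.10}.

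I expect the main obstacle to be the bookkeeping in the first two steps --- checking that \eqref{eq4.11} is \emph{exactly} what makes each weight ($e^{\tau(\lambda_+-\lambda_i)}$, $C_*^{\pm(\lambda_+-\lambda_i)}$, $k_j^{\lambda_i-\lambda_+}$, $k_j^{m^-+1}\tau_*^{-1}$) a data constant, so that inside $Q$ the doubly nonlinear anisotropic equation \eqref{eq4.9} behaves uniformly parabolically and the precise cancellation of $k_j$- and $\tau_*$-powers in the third step (which is what makes the recursion $j$-independent) actually occurs --- together with setting up the anisotropic isoperimetric inequality on the intrinsic box $K^{1/C_*}_4(0)$. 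The remaining pieces --- the energy estimate, the time integration and the telescoping --- are then standard.
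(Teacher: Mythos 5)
Your proposal follows essentially the same route as the paper's proof: discrete isoperimetric inequality at each time slice (with the anisotropic scaling factors $C_*^{(\lambda_+-\lambda_i)/p_i}$ absorbed by the provisos \eqref{eq4.11}), energy estimate for $(w^{m^-}-k_j^{m^-})_-$, H\"older per direction, cancellation of the $k_j$-powers via $(C_*k_j)^{(\lambda_+-\lambda_i)/p_i}\leqslant\gamma$, normalisation, and telescoping; applying the isoperimetric inequality to $w^{m^-}$ with levels $k_j^{m^-}$ instead of to $w$ with levels $k_j$ as in the paper is a cosmetic variation. One small technical slip: the energy estimate should be written over a strictly larger spatial box --- the paper uses $K^{1/C_*}_8(0)\times[\tau_*/4,\tau_*]$ --- so that the cutoff can be $\equiv1$ on all of $K^{1/C_*}_4(0)$, which is exactly where the isoperimetric step and the measure information \eqref{eq4.8} are used; with your choice of outer domain $K^{1/C_*}_4(0)$ and $\zeta^{(1)}\equiv1$ only on a strictly smaller box, the gradient bound does not cover the full cube and the isoperimetric argument as stated would not close.
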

\begin{proof}
Set $k_j:=\dfrac{\epsilon}{2^j}$ and \[ A_j(\tau):=K^{1/C_{*}}_{4}(0)\cap [w(\cdot, \tau)<k_j] \qquad \mbox{and}  \qquad |A_j|:=\int\limits_{\tau_{*}/2}^{\tau_{*}} |A_j(\tau)|\,d\tau \] for $j=0, \cdots, j_*-1$. The Discrete Isoperimetric Inequality (see \cite[Prop.5.1,Chap.10]{DB-PDEs}) together with \eqref{eq4.8} allow us to derive, for all $0<\tau\leqslant \tau_{*}$ 
\begin{eqnarray*}
k_{j+1}|A_{j+1}(\tau)|& \leqslant & \gamma(\alpha_0) \sum\limits_{i=1}^N C_*^{\frac{\lambda_+-\lambda_i}{p_i}} \int\limits_{A_j(\tau)\setminus A_{j+1}(\tau)}|w_{z_i}|\,dz \\
&\leqslant &\gamma(\alpha_0)  \sum\limits_{i=1}^N C_*^{\frac{\lambda_+-\lambda_i}{p_i}} \ k_{j}^{-\frac{(m_i-1)(p_i-1)+m^{-}-1}{p_i}}  \int\limits_{A_j(\tau)\setminus A_{j+1}(\tau)}w^{\frac{(m_i-1)(p_i-1)+m^{-}-1}{p_i}}|w_{z_i}|\,dz \\
&\leqslant & \gamma(\alpha_0)  \sum\limits_{i=1}^N C_*^{\frac{\lambda_+-\lambda_i}{p_i}} \ k_{j}^{-\frac{(m_i-1)(p_i-1)+m^{-}-1}{p_i}}\,\int\limits_{A_j(\tau)\setminus A_{j+1}(\tau)}w^{\frac{(m_i-m^-)(p_i-1)}{p_i}}\left| \left((w^{m^-} -k_j^{m-})_-\right)_{z_i} \right|\,dz.
\end{eqnarray*}
We now integrate in time over $\left[ \tau_*/2, \tau_*\right]$ and use H\"older's inequality to get, from the previous estimate,
\begin{multline} \label{Ajstar}
k_{j+1} \ |A_{j_{*}}|\leqslant \gamma(\alpha_0)  \sum\limits_{i=1}^N  C_*^{\frac{\lambda_+-\lambda_i}{p_i}} \  k_{j}^{-\frac{(m_i-1)(p_i-1)+m^{-}-1}{p_i}}\times\\\times\Big(\iint\limits_{Q_{*}}
w^{(m_i-m^{-})(p_i-1)}\left| \left((w^{m^-} -k_j^{m-})_-\right)_{z_i} \right|^{p_i}\,dz d\tau\Big)^{\frac{1}{p_{i}}} |A_j\setminus A_{j+1}|^{1-\frac{1}{p_{i}}},
\end{multline}
where $Q_*= K^{1/C_{*}}_4(0)\times [\tau_*/2, \tau_{*}] \subset K^{1/C_{*}}_8(0)\times [\tau_{*}/4, \tau_{*}]=Q_{*}'$.
By writing energy estimates over $Q_{*}'$ for $(w^{m^-} -k_j^{m^-})_- \zeta^{p^+}$, being $0 \leq \zeta\leq 1$ a piecewise smooth cutoff function defined in $Q_{*}'$ satisfying  $\zeta=1$ in $Q_{*}$, $|\zeta_\tau|\leqslant \gamma/\tau_{*}$ and $|\zeta_{z_i}|^{p_i}\leqslant \gamma \ C_{*}^{-(\lambda_{+}-\lambda_i)}$, $i=1,...,N$, one arrives at

$ \displaystyle{ \sum\limits_{i=1}^N \iint\limits_{Q_{*}}w^{(m_i-m^{-})(p_i-1)}\left| \left((w^{m^-} -k_j^{m-})_-\right)_{z_i} \right|^{p_i}\,dz d\tau }$
\vspace{-.2cm}
\begin{eqnarray*}
& \leqslant & 
\sum\limits_{i=1}^N \iint\limits_{Q_{*}} e^{\tau(\lambda_{+}-\lambda_i)}w^{(m_i-m^{-})(p_i-1)}\left| \left((w^{m^-} -k_j^{m-})_-\right)_{z_i} \right|^{p_i}\,dz d\tau \\ \nonumber
& \leqslant & \gamma(\delta)\left( k^{\lambda_{+}-1}_{j_{*}}k^{1+m^{-}}_j+\sum\limits_{i=1}^N \Big(\frac{e^{\tau_{*}}}{C_{*}}\Big)^{\lambda_{+}-\lambda_i}k_j^{\lambda_i+m^{-}}\right)|Q_{*}|
\\ \nonumber
&\leqslant &\gamma(\delta) k_j^{\lambda_{+}+m^{-}}\left(1+\sum\limits_{i=1}^N\Big(\frac{\tau^{\frac{1}{\lambda_{+}-1}}_{*} e^{\tau_{*}}}{C_{*}}\Big)^{\lambda_{+}-\lambda_i} \right)|Q_{*}|\leqslant \gamma(\delta)k_j^{\lambda_{+}+m^{-}}|Q_{*}|, 
\end{eqnarray*}
provided that \eqref{eq4.11} holds. Therefore, this last estimate together with \eqref{Ajstar} yield
\begin{equation*}
\frac{|A_{j_{*}}|}{|Q_{*}|}\leqslant \gamma(\delta, \alpha_0)\sum\limits_{i=1}^N (C_* k_j)^{\frac{\lambda_{+}-\lambda_{i}}{p_i}}\Big(\frac{|A_j\setminus A_{j+1}|}{|Q_{*}|}\Big)^{1-\frac{1}{p_i}}\leqslant\gamma(\delta, \alpha_0)\Big(\frac{|A_j\setminus A_{j+1}|}{|Q_{*}|}\Big)^{1-\frac{1}{p_{-}}}.
\end{equation*}
 Summing up the last inequality for  $0\leqslant j \leqslant j_{*}-1$ and choosing $j_*$ by the condition
\begin{equation*}
\frac{\gamma(\delta, \alpha_0)}{j_{*}^{1-\frac{1}{p_{-}}}}\leqslant \nu,
\end{equation*}
we arrive at the required \eqref{eq4.10}. Observe that from this choice, $j_*$ depends on the data, $\alpha_0$ and $\nu$.
\end{proof}

\vspace{.3cm}

\noindent The next and final step is to fix $\nu$ (and consequently one obtains $j_*$), depending only on the data, and then by arguing {\it a la De Giorgi} get an inferior bound to $u$. 

\noindent Observe that, when working over the bounded cylinder $Q_*$, $w$ is a bounded function that satisfies \eqref{eq4.9}-\eqref{newstructcond}, hence one can obtain a similar result to Lemma \ref{lem2.4} for $w$. In fact, from energy estimates written for $(w^{m^-} -k_j^{m^-})_- \zeta^{p^+}$ over $Q_j$, being $0 \leq \zeta\leq 1$ a piecewise smooth cutoff function defined in $Q_j$ and vanishing on its parabolic boundary, where 
\[K^{1/C_{*}}_{2}(0)\times\left[\frac{3\tau_{*}}{4},\tau_{*}\right]=: Q_{**} \subset Q_j \subset Q_*, \quad  \mbox{and} \quad \dfrac{k_{j_*}}{2} < k_j \leq k_{j_*} \]
we get 

$\displaystyle{
\sup\limits_{\tau}\int\limits_{K_j \times\{\tau\}}g_{-}(w^{m^{-}}, k_j^{m^{-}})\,\zeta^{p_{+}}\,dz+\gamma^{-1}\sum\limits_{i=1}^N \iint\limits_{Q_j}w^{(m_i-m^{-})(p_i-1)} \left| \left((w^{m^-} -k_j^{m-})_-\right)_{z_i} \right|^{p_{i}}\,\zeta^{p_{+}}dz\,d\tau}$
\begin{eqnarray*}
    & \leqslant & \frac{\gamma 2^{j}}{\tau_*}\iint\limits_{Q_j} g_{-}(w^{m^{-}}, k_j^{m^{-}})\,\,dz d\tau +\gamma \sum\limits_{i=1}^{N} 2^{j p_i} \ \frac{e^{\tau_*(\lambda_+-\lambda_i)}}{C_*^{\lambda_+-\lambda_i}} \iint\limits_{Q_j}w^{(m_i-m^{-})(p_i-1)}(w^{m^{-}}-k^{m^{-}}_j)_{-}^{p_{i}}\,dz\,d\tau \\
    & \leqslant & \gamma 2^{j p_+}k_{j_*}^{\lambda_+ +m^-} \left\{ \frac{k_{j_*}^{1-\lambda_+}}{\tau_*} + \sum\limits_{i=1}^{N} \left(\frac{e^{\tau_*}}{k_{j_*} C_*} \right)^{\lambda_+-\lambda_i} \right\} |Q_j \cap[w<k_j]| \leq \gamma 2^{j p_+} k_{j_*}^{\lambda_+ +m^-} |Q_j \cap[w<k_j]| \ .
\end{eqnarray*} 
These estimates were derived recalling that $\tau_{*}:=\Big(\frac{2^{j_{*}}}{\epsilon}\Big)^{\lambda_{+}-1}= k_{j_*}^{1-\lambda_*}$, $g_{-}(w^{m^{-}}, k_j^{m^{-}}) \leq k_{j_*}^{m^{-}+1} \chi_{[w<k_j]}$
$ 1 < e^{\tau(\lambda_+-\lambda_i)} \leqslant e^{\tau_*(\lambda_+-\lambda_i)}$ and assuming \eqref{eq4.11} holds. At this point we have an estimate like \eqref{est}, thereby we can proceed as in Lemma \ref{lem2.4} and fix $\nu$, depending only upon the data, for which \eqref{eq4.11} holds and conclude 
\begin{equation*}
w(z, \tau)\geqslant \frac{k_{j_{*}}}{2},\quad (z, \tau) \in K^{1/C_{*}}_2(0)\times [3\tau_{*}/4, \tau_{*}].
\end{equation*}
Returning to the original variables
\begin{equation}\label{eq4.12}
u(x, t)\geqslant \frac{\epsilon k}{2^{j_{*}+1} e^{\tau_{*}}},\quad (x,t)\in K^{k/C_{*}}_{2r}(y)\times \left(s+ \delta r^{p_{+}} \Big(\frac{e^{3\tau_{*}/4}}{k}\Big)^{\lambda_{+}-1}, s+ \delta r^{p_{+}} \Big(\frac{e^{\tau_{*}}}{k}\Big)^{\lambda_{+}-1}\right) \ .
\end{equation}

\noindent By taking 
\[ C_{*}=\dfrac{2^{1+j_*} e^{\tau_{*}}}{\epsilon},  \quad 
B_1=\delta  e^{3\tau_{*}(\lambda_{+}-1)/4}<\tilde{B_1}= \delta  e^{\tau_{*}(\lambda_{+}-1)}, \quad \mbox{and} \quad
\epsilon_*=\dfrac{p_{-}}{N \ln C_*} \]
 assumptions \eqref{eq4.6} and \eqref{eq4.11} are satisfied so, from \eqref{eq4.12}, we arrive at the required \eqref{eq4.3}. This completes the proof of Theorem \ref{th4.1}.

\subsection{The Singular Case}\label{Sec.5}

In what follows we assume, as before, that \eqref{eq4.1} holds for some sufficiently small positive number $\epsilon_*$ to be chosen; but now we will be considering \eqref{eq1.1} to be singular, meaning that $\lambda_+<1$. In this singular setting, the result on the expansion of positivity reads like 
\begin{theorem}\label{th5.1}
Let $k$ and $r$ be two positive numbers and $\alpha_0 \in (0,1)$. If
\begin{equation}\label{eq5.1}
|K^{k}_r(y)\cap\{u(\cdot, s)\geqslant k\}|\geqslant \alpha_0 |K^{k}_r(y)|,
\end{equation}
then there exist constants $\delta$, $\varepsilon$, $\epsilon_{*}\in(0,1)$ and $C_{*}>1$, depending only on the data and $\alpha_0$, such that
\begin{equation}\label{eq5.2}
u(x, t)\geqslant \frac{k}{C_{*}},\quad x\in K^{k/C_{*}}_{r}(y),
\end{equation}
and for all $t$
\begin{equation}\label{eq5.3}
s+(1-\varepsilon) \delta \,r^{p_{+}}\,\,k^{1-\lambda_{+}} \leqslant t\leqslant s+ \delta \,r^{p_{+}}\,\,k^{1-\lambda_{+}}.
\end{equation}

\end{theorem}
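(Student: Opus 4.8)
The plan is to transplant, with the adjustments forced by $\lambda_{+}<1$, the three–step architecture of the degenerate case (Theorem~\ref{th4.1}): first transfer the positivity datum to a dilated intrinsic cube, then propagate it in time, then pass to exponential–shift variables on which a ``shrinking of the bad set'' lemma and a De Giorgi iteration give a pointwise lower bound. The initial step is unchanged: since $K^{k}_{r}(y)\subset K^{k/C_{*}}_{4r}(y)$, imposing $\epsilon_{*}\leqslant\frac{p\ln 4}{N\ln C_{*}}$ (so that $C_{*}^{-\sum_{i}(\lambda_{+}-\lambda_{i})/p_{i}}\geqslant C_{*}^{-\epsilon_{*}N/p}\geqslant\tfrac14$) turns \eqref{eq5.1} into a comparable measure estimate for $u(\cdot,s)$ on the shifted cube $K^{k/C_{*}}_{4r}(y)$, with $C_{*}>1$ left free. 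Then, as in Lemma~\ref{lem4.1}, an energy argument — testing \eqref{eq1.3} with a truncation $(u^{m^{-}}-(e^{-\tau}k)^{m^{-}})_{-}\zeta(x)^{p_{+}}$ over a cylinder of height comparable to $\delta\,r^{p_{+}}k^{1-\lambda_{+}}$ with a time–independent cutoff, and using the usual constraints relating $\tau_{*}$ to $C_{*}$ to absorb the anisotropic weights — upgrades this to a measure estimate $|K^{k/C_{*}}_{4r}(y)\cap[u(\cdot,t)\geqslant c(\alpha_{0})k]|\geqslant c'(\alpha_{0})|K^{k/C_{*}}_{4r}(y)|$ for all $t$ in a window comparable to $(s,\,s+\delta\,r^{p_{+}}k^{1-\lambda_{+}}]$.

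The heart of the matter is the exponential shift, which must be set up differently here. After rescaling space by the intrinsic dilation of $K^{k}_{r}(y)$ and the unknown by $w=(e^{\tau}/k)u$, one takes a time change that — unlike the degenerate case, where it is unbounded — stays bounded and \emph{accumulates} at $s+\delta\,r^{p_{+}}k^{1-\lambda_{+}}$ as the shift parameter runs to its limit, with the sign chosen so that $w\geqslant0$ is a bona fide local weak \emph{super}-solution of a transformed equation whose structure conditions are of the type \eqref{eq1.2} but carry weights $e^{\tau(\lambda_{+}-\lambda_{i})}$ that remain bounded above and below on the working range. It is precisely this accumulation that places the interval \eqref{eq5.3} at the upper end of the scale $r^{p_{+}}k^{1-\lambda_{+}}$ instead of letting it escape. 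On $w$ one then runs, as in Lemma~\ref{lem4.2}, a ``shrinking of the bad set'' step — the Discrete Isoperimetric Inequality together with energy estimates over nested cylinders — producing, for any prescribed $\nu\in(0,1)$, a level $k_{*}=\epsilon\,2^{-j_{*}}$ with $|Q_{*}\cap[w\leqslant k_{*}]|\leqslant\nu|Q_{*}|$ on a suitable sub-cylinder, under the analogue of the tuning \eqref{eq4.11} of $\tau_{*}$ and $C_{*}$; and finally, with $\nu$ fixed by the data, a De Giorgi iteration using the minus–truncations $(w^{m^{-}}-k_{j}^{m^{-}})_{-}$ — for which $w\geqslant0$ alone suffices, no upper bound on $w$ being needed — giving $w\geqslant k_{j_{*}}/2$ on a smaller cylinder, hence $u\geqslant\epsilon\,k\,2^{-j_{*}-1}e^{-\tau_{*}}$ on $K^{k/C_{*}}_{r}(y)$ for $t$ in the image of the De Giorgi sub-interval.

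It remains to close the bookkeeping: setting $C_{*}:=2^{1+j_{*}}e^{\tau_{*}}/\epsilon$ makes the last bound exactly $k/C_{*}$, so \eqref{eq5.2} holds; choosing $\epsilon_{*}$ of the form $p_{-}/(N\ln C_{*})$ keeps both the cube–inclusion step licit and $e^{\tau_{*}(\lambda_{+}-\lambda_{-})}$ bounded by a data-constant (which is what lets the energy and embedding machinery for $w$ run with data-constants); and $\delta$ and $\varepsilon$ are read off from the endpoints of the De Giorgi sub-interval, whose image under the time change is, after relabelling the data- and $\alpha_{0}$-dependent constants, the interval $[\,s+(1-\varepsilon)\delta\,r^{p_{+}}k^{1-\lambda_{+}},\ s+\delta\,r^{p_{+}}k^{1-\lambda_{+}}\,]$, the upper endpoint being the accumulation value of the time change. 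The embedding Lemma~\ref{embedding} and the local energy estimates of Lemma~\ref{lem2.3} are invoked at each De Giorgi step exactly as in Section~\ref{Sec.2}, and the initial-data variant Lemma~\ref{lem2.5} is available if one wishes to widen or sharpen the time interval.

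The main obstacle, as the authors themselves point out, is keeping the nonlinear anisotropy under control \emph{through} the exponential shift: the change of variables weights the $i$-th flux by $e^{\tau(\lambda_{+}-\lambda_{i})}$, so the whole scheme only closes if $\lambda_{+}-\lambda_{-}$ is small compared with $1/\tau_{*}$, hence with $1/\ln C_{*}$; and since $C_{*}$ is itself fixed only at the end in terms of $j_{*}$, which is produced by the De Giorgi closure inequality $\gamma(\delta,\alpha_{0})\,j_{*}^{-(1-1/p_{-})}\leqslant\nu$, the threshold $\epsilon_{*}$ comes out determined but purely qualitative. The extra, specifically singular, point of care is the correct choice of sign and saturation of the time change, so that $w$ is a forward super-solution sweeping exactly the intended interval — which is what confines the recovered positivity to a window at the top of the natural time scale. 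Everything else (Steklov averaging to justify the test functions, Young's and Hölder's inequalities, the fast geometric convergence lemma) is routine, being already used in Section~\ref{Sec.2} and in the proof of Theorem~\ref{th4.1}.
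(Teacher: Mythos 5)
Your overall architecture is right — initial transfer to $K^{k/C_{*}}_{4r}(y)$, time propagation via the Lemma \ref{lem4.1} analogue (Lemma \ref{lem5.1}), exponential shift with the bounded time change $t = s + \delta k^{1-\lambda_+}r^{p_+}\bigl(1-e^{-\tau(1-\lambda_+)}\bigr)$ accumulating at $s+\delta k^{1-\lambda_+}r^{p_+}$, shrinking of the bad set via the isoperimetric lemma, De Giorgi iteration, and the closing choice $C_{*}=2^{1+j_{*}}e^{\tau_0}/\epsilon$, $\epsilon_{*}\sim p_{-}/(N\ln C_{*})$. But there is a genuine gap in how you reach the time interval \eqref{eq5.3}, and it hinges exactly on what you dismiss as optional.

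In the degenerate case the time change $\tau\mapsto s+\delta r^{p_+}(e^{\tau}/k)^{\lambda_+-1}$ is unbounded and expanding, so the $t$-image of the De Giorgi sub-interval $[3\tau_*/4,\tau_*]$ is itself a macroscopic time range and directly yields \eqref{eq4.4}. In the singular case the time change contracts near the accumulation point: writing $k_0=\epsilon e^{\tau_0}$, the $\tau$-range on which the shrinking-of-the-bad-set step is run is $(\tau_0+k_0^{1-\lambda_+},\,\tau_0+2k_0^{1-\lambda_+}]$, and its $t$-image has length of order $\delta k^{1-\lambda_+}r^{p_+}\,e^{-\tau_0(1-\lambda_+)}e^{-k_0^{1-\lambda_+}(1-\lambda_+)}$, which for the large $\tau_0$ needed in the end is exponentially smaller than the target gap $\varepsilon\delta k^{1-\lambda_+}r^{p_+}$. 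Worse, the De Giorgi iteration is not run on all of $Q'$ but on a single sub-cylinder of $\tau$-length $k_{j_*}^{1-\lambda_+}$ obtained by pigeonholing after partitioning $Q'$ into $2^{j_*(1-\lambda_+)}$ pieces, so the range of $\tau$ on which you control $w$ is even shorter. Your claim that ``$\delta$ and $\varepsilon$ are read off from the endpoints of the De Giorgi sub-interval, whose image under the time change is, after relabelling, the interval $[\,s+(1-\varepsilon)\delta r^{p_+}k^{1-\lambda_+},\ s+\delta r^{p_+}k^{1-\lambda_+}\,]$'' is therefore not correct: that image is far too thin, and its upper endpoint is strictly below the accumulation value.

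What actually closes the argument — and what the paper does — is to use the De Giorgi step only to produce a pointwise bound $u(\cdot,t_1)\geqslant k_1:=\epsilon k/(2^{j_*+1}e^{\tau_1-\tau_0})$ at a \emph{single} time level $t_1$, over $K^{k/C_*}_{2r}(y)$, and then invoke Lemma \ref{lem2.5} (the initial-data De Giorgi lemma, not the interior one) to carry this lower bound forward in time over the full macroscopic window $(t_1,\ t_1+\nu_0 k_1^{1-\lambda_+}r^{p_+}]$ on the halved cube $K^{k/C_*}_{r}(y)$. Only then is $\tau_0$ tuned, via $e^{\tau_0}=\tfrac{2^{j_*+1}}{\epsilon}\bigl(\tfrac{\delta}{\nu_0}\bigr)^{1/(1-\lambda_+)}$, so that $t_1+\nu_0 k_1^{1-\lambda_+}r^{p_+}=s+\delta k^{1-\lambda_+}r^{p_+}$, and $\varepsilon=e^{-(\tau_0+2k_0^{1-\lambda_+})(1-\lambda_+)}$ is read off from $t_1\leqslant s+(1-\varepsilon)\delta k^{1-\lambda_+}r^{p_+}$. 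So Lemma \ref{lem2.5} is not a device ``to widen or sharpen''; it is the load-bearing step that converts a single-time positivity into the time interval \eqref{eq5.3}, and without it your proof does not reach the stated conclusion. (It is also what forces the halving from $K^{k/C_*}_{2r}(y)$ to $K^{k/C_*}_{r}(y)$ in \eqref{eq5.2}, which your write-up silently glosses over.)
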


\vspace{.2cm}

\noindent Once we consider $\epsilon_*$ to verify \eqref{eq4.6}
from \eqref{eq5.1} we obtain 
\begin{equation}\label{eq5.5}
|K^{k/C_{*}}_{4r}(y)\cap \{ u(\cdot, s)> k\}|\geqslant |K^{k}_r(y)\cap\{u(\cdot, s) >k\}|\geqslant \alpha_0 |K^{k}_{r}(y)|
\geqslant  \frac{\alpha_0}{4^{\frac{Np_{+}}{p}+1}} |K^{k/C_{*}}_{4r}(y)|.
\end{equation}

\noindent Proceeding very closely to what was done in Lemma \ref{lem4.1}, we present (and prove analogously) the first result towards expansion on positivity.
\begin{lemma}\label{lem5.1}
There exist $\delta$, $\epsilon \in (0,1)$, depending only on the data
and $\alpha_0$, such that  
\begin{equation}\label{eq5.6}
|K^{k/C_*}_{4r}(y)\cap \{ u(\cdot, t)> \epsilon \,k\}|\geqslant \frac{\alpha^2_0}{16^{\frac{Np_{+}}{p}+2}} |K^{k/C_*}_{4r}(y)|,\quad\text{for all}\quad t\in(s, s+\delta r^{p_{+}}\,k^{1-\lambda_{+}}].
\end{equation}
\end{lemma}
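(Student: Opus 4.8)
The plan is to mirror, almost verbatim, the proof of Lemma \ref{lem4.1}, but simplified by the absence of the exponential shift since in the singular regime $\lambda_+<1$ the time factor already works in our favor. First I would fix $\sigma\in(0,1)$ and introduce the shrunk intrinsic cube
\[\bar K^{k/C_*}_{4r}(y):=\{x:\,|x_i-y_i|\leqslant(1-\sigma)(4r)^{\frac{p_+}{p_i}}(C_*/k)^{\frac{\lambda_+-\lambda_i}{p_i}},\ i=1,\dots,N\}\subset K^{k/C_*}_{4r}(y),\]
and apply the energy estimates \eqref{eq2.3} on the cylinder $K^{k/C_*}_{4r}(y)\times(s,s+\eta]$ with $\eta:=\delta r^{p_+}k^{1-\lambda_+}$, to the test function $(u^{m^-}-(\epsilon k)^{m^-})_-\zeta^{p^+}$ — or, following Lemma \ref{lem4.1} precisely, to the level $k$ itself with cutoff $\zeta$ time-independent, equal to $1$ on $\bar K^{k/C_*}_{4r}(y)$ and with $|\zeta_{x_i}|\leqslant \sigma^{-1}(4r)^{-p_+/p_i}(k/C_*)^{(\lambda_+-\lambda_i)/p_i}$. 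Dropping the nonnegative gradient term on the left, this yields
\[\sup_{s\leqslant t\leqslant s+\eta}\int_{\bar K^{k/C_*}_{4r}(y)}g_-(u^{m^-},k^{m^-})\,dx\ \leqslant\ \int_{K^{k/C_*}_{4r}(y)\times\{s\}}g_-(u^{m^-},k^{m^-})\,dx+(\text{gradient-cutoff term}).\]

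Next I would estimate the three pieces. The initial term is bounded above using $g_-(u^{m^-},k^{m^-})\leqslant \frac{(m^-)^2}{m^-+1}k^{m^-+1}\chi_{[u<k]}$ and the measure assumption \eqref{eq5.5}, which gives $|K^{k/C_*}_{4r}(y)\cap[u(\cdot,s)<k]|\leqslant(1-\alpha_0 4^{-(Np_+/p+1)})|K^{k/C_*}_{4r}(y)|$. The gradient-cutoff term is $\frac{\gamma}{\sigma^{p_+}}\sum_i (k/C_*)^{\lambda_+-\lambda_i}\iint u^{(m_i-m^-)(p_i-1)}(u^{m^-}-k^{m^-})_-^{p_i}\,dx\,dt$, which — bounding $(u^{m^-}-k^{m^-})_-^{p_i}\leqslant k^{m^- p_i}$ and $u^{(m_i-m^-)(p_i-1)}\leqslant k^{\lambda_i-m^-(p_i-1)}$ on $[u<k]$, and using $|t-s|\leqslant\eta=\delta r^{p_+}k^{1-\lambda_+}$ — collapses to $\frac{\gamma\delta}{\sigma^{p_+}}k^{m^-+1}|K^{k/C_*}_{4r}(y)|\sum_i C_*^{-(\lambda_+-\lambda_i)}$, and since $\lambda_+-\lambda_i\geqslant 0$ each factor $C_*^{-(\lambda_+-\lambda_i)}\leqslant 1$ (this is where the singular case is easier — no $e^{\tau(\lambda_+-\lambda_i)}$ blow-up to control, so no constraint like \eqref{eq4.11} beyond \eqref{eq4.6}). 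On the left, restricting to the sublevel set $[u(\cdot,t)<\epsilon k]$ and using $g_-(u^{m^-},k^{m^-})\geqslant \frac{(m^-)^2}{m^-+1}k^{m^-+1}(1-\frac{m^-+1}{m^-}\epsilon)$ there, one arrives at
\[|K^{k/C_*}_{4r}(y)\cap\{u(\cdot,t)<\epsilon k\}|\leqslant\Big[N\sigma+\frac{1-\alpha_0 4^{-(Np_+/p+1)}}{1-\frac{1+m^-}{m^-}\epsilon}+\frac{\delta\gamma}{\sigma^{p_+}}\big(1-\tfrac{1+m^-}{m^-}\epsilon\big)^{-1}\Big]|K^{k/C_*}_{4r}(y)|,\]
valid for all $t\in(s,s+\delta r^{p_+}k^{1-\lambda_+}]$.

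Finally, I would close exactly as in Lemma \ref{lem4.1}: choose $\epsilon\in(0,1)$ so that $\big(1-\frac{1+m^-}{m^-}\epsilon\big)^{-1}=1+\alpha_0 4^{-(Np_+/p+1)}$, which makes the middle bracketed term equal to $(1-\alpha_0 4^{-(Np_+/p+1)})(1+\alpha_0 4^{-(Np_+/p+1)})=1-\alpha_0^2 16^{-(Np_+/p+1)}\leqslant 1-\alpha_0^2 16^{-(Np_+/p+2)}$, and then pick $\sigma$ and $\delta$ small, depending only on the data and $\alpha_0$, so that $N\sigma+\frac{\delta\gamma}{\sigma^{p_+}}(1+\alpha_0 4^{-(Np_+/p+1)})\leqslant\frac{15\alpha_0^2}{16^{Np_+/p+2}}$. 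Passing to complements gives \eqref{eq5.6}. I do not anticipate a real obstacle here: the only subtlety worth flagging is making sure the powers of $C_*$ in the gradient term stay bounded, which is automatic since all exponents $\lambda_+-\lambda_i$ are nonnegative and $C_*>1$ — this is precisely the structural simplification that the singular sign of $\lambda_+-1$ buys us, and it is why, unlike the degenerate Lemma \ref{lem4.1}, no auxiliary smallness like \eqref{eq4.11} needs to be imposed at this stage.
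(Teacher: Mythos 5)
Your proof is correct and matches the paper's: Lemma \ref{lem5.1} is only stated to be proved ``analogously'' to Lemma \ref{lem4.1}, and your $\tau=0$ specialization --- valid precisely because in the singular regime the factors $(k/C_*)^{\lambda_+-\lambda_i}\leqslant 1$ are automatic, so no counterpart of the constraint $\tau\leqslant\ln C_*$ is needed --- is exactly that analogue. One small arithmetic slip worth fixing: the intermediate step $1-\alpha_0^2\,16^{-(Np_+/p+1)}\leqslant 1-\alpha_0^2\,16^{-(Np_+/p+2)}$ is true but points in the unhelpful direction and should be dropped; what actually closes the estimate is the identity $1-\alpha_0^2/16^{Np_+/p+1}=1-16\,\alpha_0^2/16^{Np_+/p+2}$, which together with the $\sigma,\delta$-terms bounded by $15\,\alpha_0^2/16^{Np_+/p+2}$ yields the required $1-\alpha_0^2/16^{Np_+/p+2}$, exactly as your choice of constants in fact delivers.
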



\vspace{.2cm}

\noindent Introduce the new unknown function, defined in a new space-time configuration,
\begin{equation*}
w(z, \tau):=\frac{e^{\tau}}{k}\,u(y_1+z_1 \frac{r^{\frac{p_{+}}{p_1}}}{k^{\frac{\lambda_{+}-\lambda_1}{p_1}}},..., y_N+z_N \frac{r^{\frac{p_{+}}{p_N}}}{k^{\frac{\lambda_{+}-\lambda_N}{p_N}}}, s+ \delta k^{1-\lambda_{+}}\,r^{p_{+}}\,(1-e^{-\tau (1-\lambda_{+})}))
\end{equation*}
which due to \eqref{eq5.6} satisfies 
\begin{equation}\label{eq5.7}
|K^{1/C_*}_{4}(0)\cap\{w(\cdot, \tau)\geqslant \epsilon\,e^{\tau} \}|\geqslant \frac{\alpha^2_0}{16^{\frac{Np_{+}}{p}+2}}|K^{1/C_*}_{4}(0)|,\quad\text{for all}\quad \tau >0
\end{equation}
and 
\begin{equation}\label{eq5.8}
w_\tau\geqslant \delta (1-\lambda_{+}) \; \left(\frac{e^{\tau}}{k}\right)^{\lambda_+}\,r^{p_{+}}\, u_t=\delta (1-\lambda_{+})\sum\limits_{i=1}^N \big(\bar{a}_i(\tau, z, w, \nabla w)\big)_{z_i}
\end{equation}
in $Q:=K^{1/C_*}_4(0)\times(0, \infty)$ and $\bar{a}_i(\tau, z, w, \nabla w)$ satisfy the conditions
\begin{equation}\label{Nstrcond}
\begin{cases}
\sum\limits_{i=1}^N \bar{a}_i(\tau, z, w, D w) w_{z_i} \geqslant K_1 \sum\limits_{i=1}^N e^{\tau(\lambda_{+}-\lambda_i)} w^{(m_i-1)(p_i-1)}| w_{z_i}|^{p_i},\\[.8em]
|\bar{a}_i(\tau, z, w, D w)|\leqslant K_2\,e^{\tau(\lambda_{+}-\lambda_i)} \left( \sum\limits_{j=1}^N w^{m_i-1+(m_j-1)(p_j-1)}|w_{z_j}|^{p_j}\right)^{\frac{p_i-1}{p_i}}.
\end{cases}
\end{equation}
Let $\tau_0$ be a fixed positive number (to be chosen) and consider the levels
$$k_0=\epsilon e^{\tau_0}\quad \text{and}\quad k_j=\frac{k_0}{2^j},\quad j=0, 1, 2,...,j_{*}-1,$$
where $j_{*}$ is to be specified later. Then \eqref{eq5.7} can be rewritten as
\begin{equation}\label{eq5.9}
|K^{1/C_*}_{4}(0)\cap\{w(\cdot, \tau)\geqslant k_j\}|\geqslant \frac{\alpha^2_0}{16^{\frac{Np_{+}}{p}+2}}|K^{1/C_*}_{4}(0)|,\quad\text{for all}\quad \tau \geqslant \tau_0.
\end{equation}


\noindent Consider energy estimates written for $(w^{m^{-}}- k_j^{m^{-}})_{-} \zeta^{p^+}$ over $Q^{''}:=K^{1/C_*}_{8}(0)\times(\tau_0, \tau_0+2k_{0}^{1-\lambda_{+}})$, where $\zeta=\zeta(x,t) $ is a piecewise smooth cutoff function trapped between $Q^{'}:=K^{1/C_*}_{4}(0)\times (\tau_0 +k_{0}^{1-\lambda_{+}}, \tau_0+2k_{0}^{1-\lambda_{+}})$ and $Q^{''}$. We then get

\[\hspace{-5cm}\sum\limits_{i=1}^{N} \iint\limits_{Q^{'}} w^{(m_i-m^{-})(p_i-1)}\left|\left((w^{m^{-}}-k^{m^{-}}_j)_{-}\right)_{z_i}\right|^{p_{i}}\,dz\,d\tau \]
\vspace{-.5cm}
\begin{eqnarray*}
& \leqslant &\gamma(\delta)\, k_0^{\lambda_{+}-1}\iint\limits_{Q^{''}} g_{-}(w^{m^-}, k_j^{m^-}) \,dz d\tau\\ 
& & +\gamma(\delta) \sum\limits_{i=1}^{N}\Big(\frac{e^{\tau_0+2k^{1-\lambda_{+}}_0}}{C_*}\Big)^{\lambda_{+}-\lambda_i}\iint\limits_{Q^{''}}w^{(m_i-m^{-})(p_i-1)}(w^{m^{-}}-k^{m^{-}}_j)_{-}^{p_{i}}\,dz\,d\tau\\ 
& \leqslant & \gamma(\delta) \Big(k_{0}^{\lambda_{+}-1}k_j^{1+m^{-}}+\sum\limits_{i=1}^n \Big(\frac{e^{\tau_0+2k^{1-\lambda_{+}}_0}}{C_*}\Big)^{\lambda_{+}-\lambda_i} k_j^{\lambda_i+m^{-}}\Big)|Q^{'}| \\ 
& \leqslant &
\gamma(\delta)  k_j^{\lambda_{+}+m^{-}} \left( 1+ \sum\limits_{i=1}^n \Big(\frac{e^{\tau_0+2k^{1-\lambda_{+}}_0}}{C_* \, k_j}\Big)^{\lambda_{+}-\lambda_i} \right)|Q^{'}| 
\end{eqnarray*}
therefore
\begin{equation} \label{eq5.10}
\sum\limits_{i=1}^{N} \iint\limits_{Q^{'}} w^{(m_i-m^{-})(p_i-1)}\left|\left((w^{m^{-}}-k^{m^{-}}_j)_{-}\right)_{z_i}\right|^{p_{i}} \,dz\,d\tau \leqslant \gamma(\delta, \epsilon) \;  k_j^{\lambda_{+}+m^{-}} |Q^{'}|  
\end{equation}
provided that
\begin{equation}\label{eq5.11}
e^{\tau_0} \geq 2^{j_*} \qquad \mbox{and} \qquad  e^{\tau_0 + 2k^{1-\lambda_{+}}_0}\leqslant C_* \ .
\end{equation}

\noindent Now we use the isoperimetric lemma (a De Giorgi-Poincaré type Lemma) together with \eqref{eq5.9} to obtain, for 
$A_j (\tau)= K_{4}^{1/C_*}(0)\cap [w(\cdot,\tau)< k_j]$, $\tau_0 + k_0^{1-\lambda_+} \leqslant \tau \leqslant \tau_0 + 2 k_0^{1-\lambda_+}$ and $j=0, \cdots, j_*-1$,
\begin{eqnarray*}
k_{j+1}|A_{j+1}(\tau)| &\leqslant &\gamma(\alpha_0)\sum\limits_{i=1}^N C_*^{\frac{\lambda+-\lambda_i}{p_i}} \int\limits_{K_{4}^{1/C_*}(0)\cap [k_{j+1}< w(\cdot, \tau)< k_j]}| w_{z_i}| \; dz \\
& \leqslant & \gamma(\alpha_0)\sum\limits_{i=1}^N C_*^{\frac{\lambda+-\lambda_i}{p_i}}\ k_{j}^{-\frac{(m_i-1)(p_i-1)+m^{-}-1}{p_i}}\int\limits_{K_{4}^{1/C_*}(0)\cap [k_{j+1}< w(\cdot, \tau)< k_j ] } w^{\frac{(m_i-1)(p_i-1)+m^{-}-1}{p_i}}| w_{z_i}| dz \\
& \leqslant & \gamma(\alpha_0)\sum\limits_{i=1}^N C_*^{\frac{\lambda+-\lambda_i}{p_i}}\ k_{j}^{-\frac{(m_i-1)(p_i-1)+m^{-}-1}{p_i}}\int\limits_{A_j (\tau)\setminus A_{j+1} (\tau) } w^{(m_i-m^{-})(p_i-1)} \left|\left((w^{m^{-}}-k^{m^{-}}_j)_{-}\right)_{z_i}\right|^{p_{i}} \,dz  \ .
\end{eqnarray*}
Integrating the last inequality over the indicated time interval, using H\"{o}lder's inequality and \eqref{eq5.10} we obtain
\begin{equation*}
|A_{j_{*}}|\leqslant\gamma(\alpha_0, \delta, \epsilon)\sum\limits_{i=1}^N \left( C_* k_{j}\right)^{\frac{\lambda_{+}-\lambda_i}{p_i}}|Q^{'}|^{\frac{1}{p_i}}|A_j\setminus A_{j+1}|^{1-\frac{1}{p_i}}\leqslant \gamma(\alpha_0, \delta) \sum\limits_{i=1}^N |Q^{'}|^{\frac{1}{p_i}}|A_j\setminus A_{j+1}|^{1-\frac{1}{p_i}} \ .
\end{equation*}

\noindent Summing up the previous inequality for $0\leqslant j\leqslant j_*-1$, we obtain
\begin{equation}\label{eq5.13}
\frac{|A_{j_{*}}|}{|Q^{'}|}\leqslant \frac{\gamma(\alpha_0, \delta)}{j^{1-\frac{1}{p_{-}}}_{*}}.
\end{equation}

\noindent We have just proved that for $w$ satisfying \eqref{eq5.7}-\eqref{eq5.8}-\eqref{Nstrcond}, and under assumption \eqref{eq5.11},

For given an arbitrary positive small $\nu \in (0,1)$, there exists $j_\star$ (depending on the data and on $\alpha_o$), such that
\begin{equation}\label{eqnewsingcase}
\left|Q^{'} \cap \left[w < k_{j_*}\right] \right| \leq \nu \ |Q^{'} | \ .
\end{equation}

\vspace{.2cm} 

\noindent We are on the edge of proving Theorem \ref{th5.1}: consider the cylinder $Q^{'}=K^{1/C_*}_{4}(0)\times (\tau_0 +k_{0}^{1-\lambda_{+}}, \tau_0+2k_{0}^{1-\lambda_{+}}]$ partitioned into $2^{j_*(1-\lambda_+)}$ disjoint sub-cylinders $Q^i_{(j_{*})}$ of smaller length $k_{j_{*}}^{1-\lambda_{+}}$. Assuming \eqref{eqnewsingcase} holds true, for sure it also holds true at least in one of the sub-cylinders $Q^i_{(j_{*})}$
\[ \left|Q^i_{(j_{*})} \cap \left[w < k_{j_*}\right] \right| \leq \nu \ |Q^i_{(j_{*})} | \ . \]
Then, by arguing similarly to what was done in Lemma \ref{lem2.4}, taking $r_i=C^{\frac{\lambda_{+}-\lambda_i}{p_i}}_*$, $\eta=k_{j_{*}}^{1-\lambda_{+}}$, $k= k_{j_{*}}$ (and $a=1/2$), and assuming 
\begin{equation}\label{epsilonstarsing}
\epsilon_* \leq \frac{p_{-}}{N \ln C_*} \ , 
\end{equation}
one can find the number $\nu\in (0,1)$, depending only on the data (and independent of $\tau_0$, $k_{j_*}$ and $C_*$) and conclude that
\begin{equation*}
w(z, \tau_1)\geqslant \frac{k_{j_*}}{2},\quad z\in K^{1/C_*}_2(0),
\end{equation*}
and for some time level $\tau_0+k^{1-\lambda_{+}}_0\leqslant \tau_1 \leqslant \tau_0+ 2k^{1-\lambda_+}_0$. Returning to the original variables
\begin{equation}\label{eq5.14}
u(x, t_1)\geqslant \frac{\epsilon k}{2^{j_*+1} e^{\tau_1-\tau_0}}:= k_1,\quad x\in K^{k/C_*}_{2r}(y),
\end{equation}
for the time level $t_1=s+\delta k^{1-\lambda_{+}} r^{p_{+}} (1-e^{-\tau_1(1-\lambda_{+})})$. The idea now is to use De Giorgi type Lemma \ref{lem2.5} to conclude that something alike to \eqref{eq5.14} is also true in a full cylinder of the type $\tilde{Q} =K^{k/C_*}_{r}(y) \times (t_1, t_1 + \nu_o k_1^{1-\lambda_+} r^{p^+}] $, for some $\nu_o \in (0,1)$.

\noindent Observe that when proceeding as in the proof of Lemma \ref{lem2.5} taking $k_o=k_1$, $r_i= \dfrac{r^{p^+/p_i}}{\left(k/C_*\right)^{\lambda_+-\lambda_i}}$, $a=1/2$, and assuming  \eqref{eq4.1}, \eqref{eq4.6} and \eqref{eq5.11} hold, one gets
\[ y_{j+1} \leqslant \gamma 2^{\gamma j} \left(\frac{\nu_o k_1^{1-\lambda^+} r^{p^+}}{k_1^{1-\lambda_+} r^{p^+}}\right)^{\frac{p}{N+2/\alpha}} \epsilon^{-\frac{(\lambda_+ - \lambda_-)(N+p)}{N+2/\alpha}} \ y_j^{1+\frac{p}{N+2/\alpha}} \leqslant \gamma 2^{\gamma j} \nu_o^{\frac{p}{N+2/\alpha}} \epsilon^{-\frac{(\lambda_+ - \lambda_-)(N+p)}{N+2/\alpha}} \ y_j^{1+\frac{p}{N+2/\alpha}} \]
and then, for 
\[ \nu_0= \gamma^{-\frac{N+2/\alpha}{p}} \ 2^{-\gamma (\frac{N+2/\alpha}{p})^2} \ \epsilon^{\frac{(\lambda_+ - \lambda_-)(N+p)}{p}} \]
depending only on the data, one has 
\[ \left|K^{k/C_*}_{r}(y) \times (t_1, t_1 + \nu_o k_1^{1-\lambda_+} r^{p^+}] \cap [u<k_1] \right| \leqslant \left|K^{k/C_*}_{r}(y) \times (t_1, t_1 + \nu_o k_1^{1-\lambda_+} r^{p^+}] \right| \]
and therefore one can conclude that
\begin{equation}
    u(x,t)\geqslant \frac{k_1}{2}= \frac{\epsilon k}{2^{j_*+2} e^{\tau_1-\tau_0}},\quad (x,t) \in K^{k/C_*}_{r}(y) \times (t_1, t_1 + \nu_o k_1^{1-\lambda_+} r^{p^+}]\,.
    \end{equation}
We are one step away to prove Theorem \ref{th5.1}. So we start by choosing $\tau_0$ such that 
\[t_1 + \nu_o k_1^{1-\lambda_+} r^{p^+}= s + \delta k^{1-\lambda_+}r^{p^+}, \] that is, 
\[ e^{\tau_0}=\frac{2^{j_*+1}}{\epsilon}\Big(\frac{\delta}{\nu_0}\Big)^{\frac{1}{1-\lambda_{+}}}\]
and then we choose $C_*$ and $\epsilon_*$ as
\[ C_*= \frac{2^{j_*+2}}{\epsilon}\ e^{\tau_0+2 k^{1-\lambda_{+}}_0},\quad \epsilon_*=\dfrac{p_{-}}{N \log C_*}, \qquad \mbox{being} \ \ k_0=\epsilon e^{\tau_0} \ ; \] 
these choices assure that  \eqref{eq5.11} and \eqref{epsilonstarsing} hold true. 

\vspace{.2cm}

\noindent Theorem \ref{th5.1} is proved once we observe that 
$$t_1=s+\delta k^{1-\lambda_{+}} r^{p_{+}} (1-e^{-\tau_1(1-\lambda_{+})})\leqslant s+\delta k^{1-\lambda_{+}} r^{p_{+}} (1-e^{-(\tau_0+2 k^{1-\lambda_{+}}_0)(1-\lambda_{+})}),$$
and take $\varepsilon=e^{-(\tau_0+2 k^{1-\lambda_{+}}_0)(1-\lambda_{+})} \in (0,1)$, depending only on the data.

\section{Regularity results towards local H\"older continuity }\label{Sec.6}

\noindent Based on the results on the expansion of positivity, we obtain the reduction of the oscillation of the nonnegative, locally bounded, local weak solutions to \eqref{eq1.1}-\eqref{eq1.2} for specific ranges of $\lambda_i$ and $p_i$. The singular or degenerate character of \eqref{eq1.1} leads to different proofs either we are considering the fast diffusion range, $\lambda_+<1$, or the slow diffusion range, $\lambda_+>1$ (respectively). In the study of both cases we consider $(x_o,t_o)$ to be an interior point of $\Omega_T$ and define

\begin{equation}\label{defosc}
 \mu^+:= \esssup_{\Omega_T} u ; \qquad \mu^-:= \essinf_{\Omega_T}  u ; \qquad  \omega:= \essosc_{\Omega_T} u = \mu^+-\mu^- \ .
\end{equation}
We will consider $\mu^-=0$ since it is the interesting case to study and, for the sake of simplicity and by translation, we will consider $(x_o,t_o)=(0,0)$.

\vspace{.2cm}

\noindent Along this section,  we will be considering, for a large positive number $C_2$ (to be fixed), the extra assumption
\begin{equation}\label{newrestriction}
    p_+-p_- \leq \min\left\{ \frac{p_-}{\ln C_2} , \frac{p \ln 4}{N \ln C_2}  \right\}\ , 
\end{equation}
and the cube ($r>0$)
\begin{equation}\label{newcube}
\tilde{K}_{4r}^{\frac{\omega}{2C_2}} = \left\{x: |x_i|< \frac{(4r)^{\frac{p_+}{p_i}}}{\left(\frac{\omega}{2}\right)^{\frac{\lambda_+-\lambda_i}{p_i}}} C_2^{\frac{p_+-p_i}{p_i}} \right\} \ .
\end{equation}

\subsection{Reducing the oscillation within the fast diffusion range}

\noindent Let $r>0$ such that $K_{8r}^{\omega/2} \times (0, \theta r^{p_+}] \subset \Omega_T$, for $\theta= \displaystyle{\left(\frac{\omega}{2}\right)^{1-\lambda_+}}$ and $\lambda_+ <1$.

\vspace{.5cm}

\noindent Let $s \in (0, \theta r^{p_+}]$ be a time level (to be determined) such that, either
\begin{equation}\label{Alt1}
\left|K_{r}^{\omega/2} \cap [u(\cdot, s) > \omega/2] \right|\geq \frac{1}{2}\left|K_{r}^{\omega/2} \right|
\end{equation}
or
\begin{equation}\label{Alt2}
\left|K_{r}^{\omega/2} \cap [u(\cdot, s) > \omega/2] \right| < \frac{1}{2}\left|K_{r}^{\omega/2} \right|
\end{equation}

\noindent If \eqref{Alt1} holds, then by applying the result on the expansion of positivity Theorem \ref{th4.1} one gets
\begin{equation}\label{redoscAlt1}
u \geq \frac {\omega}{2C_1} \ , \quad \mbox{in} \ K^{\omega/(2C_{1})}_{r} \times [s+(1-\epsilon_1) \delta_1\, \theta \, r^{p_{+}}, s+ \delta_1 \theta \,r^{p_{+}}]
\end{equation}
where $0<\epsilon_1, \delta_1<1$ and $C_1>1$ depend only on the data. Remember that this holds for
\[ \lambda_+ <1 \qquad \mbox{and} \qquad \lambda_+-\lambda_- \leq \frac{p_{-}}{N \ln C_1} \ . \]

\vspace{.2cm}

\noindent If \eqref{Alt1} fails to happen then \eqref{Alt2} holds true: this is the interesting (and challenging) case to study. From \eqref{Alt2} and under assumption \eqref{newrestriction}, we can deduce that
\begin{equation}\label{Alt2a}
\left|\tilde{K}_{4r}^{\frac{\omega}{2C_2}} \cap [\bar{u}(\cdot, s) > \omega/2] \right| \geq \alpha_o \left|\tilde{K}_{4r}^{\frac{\omega}{2C_2}} \right| , \qquad \alpha_o= \frac{1}{4^{p_+\frac{N}{p} +\frac{3}{2}}}
\end{equation}
where $ 0 \leqslant\bar{u}:= \omega-u \leqslant \omega$ satisfies 
\begin{equation}\label{eq1.1a}
\bar{u}_t-\sum\limits_{i=1}^N \left(\bar{a}_i(x, t,\omega -\bar{u}, D \bar{u}) \right)_{x_i}=0,\quad (x,t)\in \Omega_T,
\end{equation} 
where the functions $\bar{a}_i(x, t, \omega - \bar{u}, D\bar{u}):= - a_i(x, t, u, Du)$, for $i=1, \cdots,N$. Observe that, for $\bar{u}$ close to zero, \eqref{eq1.1a} behaves like the anisotropic $p$-Laplacian 
\begin{equation}\label{anisopLap}
    \bar{u}_t-\sum\limits_{i=1}^N \left(\omega^{(m_i-1)(p_i-1)} |\bar{u}_{x_i}|^{p_i-2} \bar{u}_{x_i}\right)_{x_i}  = 0  \ . 
\end{equation}
Our goal now is to obtain results on the expansion of positivity of $\bar{u}$ satisfying \eqref{Alt2a}, under condition \eqref{newrestriction}. For that we start by deriving the energy estimates written for the test function $\varphi=( \bar{u}-\frac{\omega}{2})_- \xi^{p_+}(x)$ over $Q=\tilde{K}_{4r}^{\frac{\omega}{2C_2}} \times [s, s + \delta_2 \theta r^{p_+}]$, for some $\delta_2>0$,
\[ \sup_{s \leq t \leq s + \delta_2 \theta r^{p_+}} \int_{\tilde{K}_{4r}^{\frac{\omega}{2C_2}}} \left( \bar{u}-\frac{\omega}{2}\right)_-^2 \xi^{p_+} \ dx + \sum_{i=1}^N \iint_{Q} \omega^{(m_i-1)(p_i-1)} \left|{\left( \bar{u}-\frac{\omega}{2}\right)_-}_{x_i}\right|^{p_i} \xi^{p_+} \ dx \ dt   \]
\begin{eqnarray*}
&\leq &\int_{\tilde{K}_{4r}^{\frac{\omega}{2C_2}}} \left( \bar{u}-\frac{\omega}{2}\right)_-^2 \xi^{p_+}(x, s) \ dx + \gamma \iint_{Q} \omega^{(m_i-1)(p_i-1)}\left( \bar{u}-\frac{\omega}{2}\right)_-^{p_i} |\xi_{x_i}|^{p_i} \ . 
\end{eqnarray*}
Let $\sigma$ and $ \epsilon_2$ be two fixed small positive numbers (to be chosen). By considering, on the one hand, the time independent cutoff function $\xi(x) \in [0,1]$ defined in $\tilde{K}_{4r}^{\frac{\omega}{2C_2}}$ satisfying
\[ |\xi_{x_i}| \leq \gamma \frac{(\omega/2)^{\frac{\lambda_+-\lambda_i}{p_i}}}{\sigma r^{\frac{p_{+}}{p_i}} C_2^{\frac{p_+-p_-}{p_i} }} \]
and recalling \eqref{Alt2a}; and, on the other hand, the integration on the left hand side over the smaller cube $\tilde{K}_{4r\sigma}^{\frac{\omega}{2C_2}}\cap[\bar{u}< \epsilon_2 \omega/2]$, one gets 
\[ \left|\tilde{K}_{4r}^{\frac{\omega}{2C_2}} \cap [\bar{u}< \epsilon_2 \frac{\omega}{2}] \right| \leq \frac{1}{(1-\epsilon_2)^2} \left\{ 1-\alpha_o + \gamma \frac{\delta_2}{\sigma^{p_+}} + N \sigma \right\} |\tilde{K}_{4r}^{\frac{\omega}{2C_2}} | , \quad \forall t \in [s, s+ \delta_2 \theta r^{p_+}] \]
and therefore
\begin{equation*}
    |\tilde{K}_{4r}^{\frac{\omega}{2C_2}} \cap [\bar{u}< \epsilon_2 \omega/2]| \leq \left(1- \frac{\alpha_o}{2}\right) |\tilde{K}_{4r}^{\frac{\omega}{2C_2}}| , \quad \forall t \in [s, s+ \delta_2 \theta r^{p_+}]
\end{equation*}
once we choose
\[ \sigma = \frac{\alpha_o}{8N} \ , \quad \delta_2 = \frac{\alpha_o^{1+p_+}}{\gamma 2^{3+3p_+}N^{p_+} }  \ , \quad \epsilon_2 \leq 1- \sqrt{\frac{1-3\alpha_o/4}{1-\alpha_o/2}} \ .\]
We have just proven the first result towards expansion of positivity: assuming that \eqref{Alt2} holds true, and under restriction \eqref{newrestriction}, there exist $\epsilon_2, \delta_2 \in (0,1)$, depending only on the data, such that 
\begin{equation*}
    |\tilde{K}_{4r}^{\frac{\omega}{2C_2}}\cap [\bar{u} > \epsilon_2 \omega/2]| \geq \frac{\alpha_o}{2}  |\tilde{K}_{4r}^{\frac{\omega}{2C_2}} | , \qquad \forall t \in [s, s+ \delta_2 \theta r^{p_+}] \ .
\end{equation*}
If necessary, we consider $\delta_1$,  obtained along the process of the expansion of positivity related to the study of the first alternative \eqref{Alt1}, small enough so that $\delta_1 \leq \delta_2$. Then we also have 
\begin{equation}\label{1stexp}
    |\tilde{K}_{4r}^{\frac{\omega}{2C_2}}\cap [\bar{u} > \epsilon_2 \omega/2]| \geq \frac{\alpha_o}{2}  |\tilde{K}_{4r}^{\frac{\omega}{2C_2}} | , \qquad \forall t \in [s, s+ \delta_1 \theta r^{p_+}] \ .
\end{equation}

In order to expand information \eqref{1stexp} to a full cylinder, we consider to be working within the singular range $p_+<2$, and introduce the new function and variables
\[v(z,\tau)= \frac{e^{\tau}}{\omega/2} \bar{u} (x,t) , \quad z_i= \frac{x_i}{r^{\frac{p_+}{p_i}} } \left(\frac{\omega}{2}\right)^{\frac{\lambda_+-\lambda_i-}{p_i}}  \quad t=s + \delta_1 \theta r^{p_+} \left(1-e^{-\tau(2-p_+)}\right) \]
 which due to \eqref{eq1.1a} satisfies
\begin{equation}\label{eqv} 
v_{\tau} \geqslant (2-p_+)\ \delta_1 \ \sum\limits_{i=1}^N \left( e^{\tau(p_+-p_i)} 2^{(m_i-1)(p_i-1)} |v_{z_i}|^{p_i-2} v_{z_i}\right)_{z_i}   
\end{equation}
and transforms the measure theoretical information \eqref{1stexp} into a new one
\begin{equation}\label{1stexpa}
    |\tilde{K}_{4}^{\frac{1}{C_2}}\cap [v(\cdot, \tau) > \epsilon_2 \e^{\tau}]| \geq \frac{\alpha_o}{2}  |\tilde{K}_{4}^{\frac{1}{C_2}} | , \qquad \forall \tau \geqslant 0 \ .
\end{equation}
In a quite similar way to what was done for $\lambda_+<1$, we fix $\tau_0 >0 $  and consider the levels
\[ k_0=\epsilon_2 e^{\tau_0}\quad \text{and}\quad k_j=\frac{k_0}{2^j},\quad j=0, 1, 2,...,s_2-1, \]
where $s_2$ is to be specified later, to get
\begin{equation}\label{1stexpa}
    |\tilde{K}_{4}^{\frac{1}{C_2}}\cap [v(\cdot, \tau) > k_j]| \geq \frac{\alpha_o}{2}  |\tilde{K}_{4}^{\frac{1}{C_2}} | , \qquad \forall \tau \geqslant \tau_o \ .
\end{equation}
Consider
\begin{equation} \label{condC2}
2^{s_2} \leq e^{\tau_o} \qquad \mbox{and} \qquad e^{\tau_o + k_0^{2-p_+}} \leq C_2 \ .
\end{equation}
By testing \eqref{eqv} with the functions $(v-k_j)_- \xi^{p_+}$ over \[ Q=\tilde{K}_{4}^{\frac{1}{C_2}} \times [\tau_o , \tau_o + 2e^{\tau_o(2-p_+)}] \supset  \tilde{K}_{2}^{\frac{1}{C_2}} \times [\tau_o + e^{\tau_o(2-p_+)}, \tau_o + 2e^{\tau_o(2-p_+)}]=Q_{1/2} \] we arrive at
\[\sum_{i=1}^N \iint_{Q_{1/2}} |(v-k_j)_-{z_i}|^{p_i}  \ dz \ \d\tau \leq \gamma \ k_j^{p_i} \ |Q_{1/2}|
\]
and then, recalling \eqref{newrestriction} and applying the isoperimetric result, we finally get
\[  \left|Q_{1/2} \cap \left[v< \epsilon_2 \ \frac{e^{\tau_o}}{2^{s_2}}\right]\right| \leq \frac{\gamma}{s_2^{\frac{p_- -1}{p_-}}} |Q_{1/2}| \ . \]
This proves that, assuming \eqref{1stexp} and under assumptions $p_+<2$, \eqref{newrestriction} and \eqref{condC2}, for every small $\nu \in (0,1)$, there exists $s_2$ depending only on the data, such that 
\begin{equation}\label{ineqnu}  \left|Q_{1/2} \cap \left[v< \epsilon_2 \ \frac{e^{\tau_o}}{2^{s_2}}\right]\right| \leq \nu \  |Q_{1/2}| \ .
\end{equation}
Our next step is to fix $\nu \in (0,1)$, determining therefore $s_2$. 

\noindent For the time being, assume that $\nu$ is known, then from \eqref{ineqnu} one has
\[  \left|Q_{n} \cap \left[v< \epsilon_2 \ \frac{e^{\tau_o}}{2^{s_2}}\right]\right| \leq \nu \  |Q_{n}| \]
at least in one of the subcylinders
\[Q_n=\tilde{K}_{2}^{\frac{1}{C_2}} \times \left[T_o^n, T_o^n+ \left(\frac{k_0}{2^{s_2}}\right)^{2-p_+} \right] , \quad T_o^n= \tau_o + k_0^{2-p_+} + n \left(\frac{k_0}{2^{s_2}}\right)^{2-p_+}\]
for some $n=0,\dots, 2^{s_2(2-p_+)}-1$. By testing \eqref{eqv} with $(v-k_j)_- \xi^p_+$ over $Q_{n,j}$ where for $j=0,1,\dots$
\[k_j= \frac{k_0}{2^{s_2+1}} \left(1+\frac{1}{2^j}\right), \qquad r_j= 1+\frac{1}{2^j}\]
\[Q_{n,j}= \tilde{K}_{r_j}^{\frac{1}{C_2}} \times \left[T_o^n + \frac{1}{2} \left(1-\frac{1}{2^j}\right) \left(\frac{k_0}{2^{s_2}}\right)^{2-p_+}, T_o^n+ \left(\frac{k_0}{2^{s_2}}\right)^{2-p_+} \right] = \tilde{K}_{r_j}^{\frac{1}{C_2}} \times I_{n,j}\]
and $\xi$ is a piecewise smooth cutoff function entrapped between $Q_{n,j+1} \subset Q_{n,j}$, we arrive at
\[ \sup_{ \tau \in I_{n,j}}  \int_{\tilde{K}_{r_j}^{\frac{1}{C_2}}} \left( v-k_j\right)_-^2 \xi^{p_+} \ dz + \sum_{i=1}^N \iint_{Q_{n,j}}  \left|((v-k_j)_-)_{z_i}\right|^{p_i} \xi^{p_+} \ dz \ d\tau \]
\begin{eqnarray}
    &\leqslant & \gamma \ 2^{\gamma j}  \ \left(\frac{k_0}{2^{s_2}}\right)^{p_+} \left[1 + \sum_{i=1}^N \left( \frac{e^{\tau_o + k_0^{2-p_+}} C_2 }{\frac{k_0}{2^{s_2}}} \right)^{p_+-p_i} \right] |A_j| \nonumber \\
    & = & \gamma 2^{\gamma j}  \ \left(\frac{k_0}{2^{s_2}}\right)^{p_+} \left[1 + \sum_{i=1}^N \left( \frac{ 2^{s_2} e^{ k_0^{2-p_+}} C_2 }{\epsilon_2} \right)^{p_+-p_i} \right] |A_j|  \leqslant \gamma \ 2^{\gamma j}  \left(\frac{k_0}{2^{s_2}}\right)^{p} |A_j| \label{new}
\end{eqnarray}
once we consider $|A_j|:= |Q_{n,j} \cap [v<k_j]|$ and recall assumptions \eqref{newrestriction} and \eqref{condC2}. Proceeding in a now standard way that makes use of H\"older's inequality and the anisotropic embedding one gets
\[ |A_{j+1}| \leq \gamma  \ 2^{\gamma j}  \left(\frac{k_0}{2^{s_2}}\right)^{\frac{p-2}{N+2}} |A_j|^{1+ \frac{1}{N+2}} \quad \Longrightarrow \quad  Y_{j+1} \leq \gamma  \ 2^{\gamma j} \ Y_j^{1+ \frac{1}{N+2}}\ \quad \mbox{for} \ \  Y_j=\dfrac{|A_j|}{|Q_{n,j}|}  \ .\]
By choosing $\nu= \gamma^{-(N+2)} \ 2^{-\gamma(N+2)^2}$, we can apply the fast geometric convergence lemma to conclude that
\[ v (z,\tau)\geqslant \frac{k_0}{2^{s_2} +1} \qquad  (z,\tau) \in \tilde{K}_{2}^{\frac{1}{C_2}} \times \left[T_o^n + \frac{1}{2}\left(\frac{k_0}{2^{s_2}}\right)^{2-p_+} , T_o^n+ \left(\frac{k_0}{2^{s_2}}\right)^{2-p_+} \right] \]
in particular, for $\tau_o + k_0^{2-p_+} < T_o^n + \frac{1}{2}\left(\frac{k_0}{2^{s_2}}\right)^{2-p_+}  < \tau_1 \leqslant T_o^n+ \left(\frac{k_0}{2^{s_2}}\right)^{2-p_+} < \tau_o + 2k_0^{2-p_+}$,
\[ v (z,\tau_1)\geqslant \frac{k_0}{2^{s_2} +1} = \frac{ \epsilon_2 \ e^{\tau_o}}{2^{s_2}+1} \ , \qquad  z \in \tilde{K}_{2}^{\frac{1}{C_2}} \]
and, from here, we also get
\[ \bar{u} (x,t_1)\geqslant \frac{k_0}{2^{s_2} +1} = \epsilon_2 \frac{e^{\tau_o-\tau_1}}{2^{s_2}+1} \frac{\omega}{2} \geqslant \frac{\epsilon_2 \ e^{-\tau_1}}{4}\ \omega  =: a \omega \ , \qquad  x \in K_{r}^{\frac{\omega}{2}} \]
for $ t_1= s + \delta_1 \theta (1-e^{-(2-p_+)\tau_1}) r^{p_+} $. 

\vspace{.2cm}

\noindent In order to get to the final step - the reduction of the oscillation - we make a similar reasoning, with respect to $\bar{u}$, as in Lemma \ref{lem2.5}, a variant of De Giorgi-type Lemma with initial data; for that purpose, we need to use the full anisotropy of \eqref{anisopLap}. 

\vspace{.2cm}

\noindent Let $m= \min\{1, m_1, \cdots, m_N\}$ and test \eqref{anisopLap} with the functions $(\bar{u}^{m}-k_j^m)_- \xi^{p_+}$ over $Q_j$, where
\[k_j= \frac{a \omega}{2} \left(1+\frac{1}{2^j}\right) \ , \qquad r_j= \frac{r}{2} \left(1+\frac{1}{2^j}\right) \ , \qquad Q_j= K_{r_j}^{\frac{\omega}{2}} \times [t_1, t_1 + b r^{p_+} ]\]
and $0\leqslant \xi(x)\leqslant 1$ is a time-independent cutoff function defined in $K_j$ and such that $|\xi_{x_i}| \leq \dfrac{\gamma 2^{\gamma j}}{r^{p_+}}\left(\dfrac{\omega}{2}\right)^{\lambda_+-\lambda_i} $, deducing
\[ \sup_{ t_1 \leq t \leq t_1 + br^{p_+}}  \int_{K_{r_j}^{\frac{\omega}{2}}} g_{-}( \bar{u}^m, k_j^m) \xi^{p_+} \ dz + \sum_{i=1}^N \iint_{Q_j}  \omega^{(m_i-1)(p_i-1)} \ \bar{u}^{(1-m)(p_i-1)}\left|((\bar{u}^{m}-k_j^m)_-)_{x_i}\right|^{p_i} \xi^{p_+} \ dx \ dt \]
\begin{eqnarray*}
    &\leqslant & \gamma \sum_{i=1}^N \iint_{Q_j}  \omega^{(m_i-1)(p_i-1)} \bar{u}^{(1-m)(p_i-1)}(\bar{u}^m-k_j^m)_{-}^{p_i} \ |\xi_{x_i}|^{p_i} \ dx \ dt  \\
    &\leqslant & \gamma \ \frac{2^{\gamma j}}{r^{p_+}}  \ (a \omega)^{\lambda_+ + m}  \
   \sum_{i=1}^N a^{p_i-1-\lambda_+} |Q_j \cap [\bar{u}^{m}<k_j^m)]|\\
   &\leqslant & \gamma \ \frac{2^{\gamma j}}{r^{p_+}}  \ (a \omega)^{\lambda_+ + m}  \
   \sum_{i=1}^N a^{p_i-1-\lambda_+} |Q_j \cap [\tilde{u}^{m}<k_j^m)]| \ , \qquad \tilde{u}= \max \left\{ \bar{u}, \dfrac{a\omega}{2} \right\} \ .
\end{eqnarray*}
As for the left hand side, 
\[ \int_{K_{r_j}^{\frac{\omega}{2}}} g_{-}( \bar{u}^m, k_j^m) \xi^{p_+} \ dz \geq \int_{K_{r_j}^{\frac{\omega}{2}}} g_{-}( \tilde{u}^m, k_j^m) \xi^{p_+} \ dz \geq \frac{(a\omega)^{1-m}}{2^{2-m}} \int_{K_{r_j}^{\frac{\omega}{2}}} ( \tilde{u}^m - k_j^m)_-^2 \xi^{p_+} \ dz\] 
\[  \iint_{Q_j}  \omega^{(m_i-1)(p_i-1)} \bar{u}^{(1-m)(p_i-1)}(\bar{u}^m-k_j^m)_{-}^{p_i} \ |\xi_{x_i}|^{p_i} \ dx \ dt  \geq   a^{(1-m_i)(p_i-1)}\iint_{Q_j} \left|((\tilde{u}^{m}-k_j^m)^{\alpha_i}_-)_{x_i}\right|^{p_i} \xi^{p_+} \ dx \ dt  \]
for $\alpha_i= \dfrac{m_i(p_i-1) + m}{p_i m}>1 $;  collecting all the previous estimates we arrive at
\[ \sup_{ t_1 \leq t \leq t_1 + br^{p_+}}  (a\omega)^{1-m} \int_{K_{r_j}^{\frac{\omega}{2}}} ( \tilde{u}^m - k_j^m)_-^2 \xi^{p_+} \ dx + \gamma \sum_{i=1}^N a^{(1-m_i)(p_i-1)}\iint_{Q_j} \left|((\tilde{u}^{m}-k_j^m)^{\alpha_i}_-)_{x_i}\right|^{p_i} \xi^{p_+} \ dx \ dt \]
\[\leq \gamma \ \frac{2^{\gamma j}}{r^{p_+}}  \ (a \omega)^{\lambda_+ + m}  \
   \sum_{i=1}^N a^{p_i-1-\lambda_+} |A_j| \ , \qquad \mbox{for} \quad  |A_j|=|Q_j \cap [\tilde{u}^{m}<k_j^m)]| \ . \]
By applying H\"older's inequality together with the anisotropic embedding \eqref{PS}, we obtain
\begin{eqnarray*}
    (k_j^m-k_{j+1}^m)^{\alpha p} |A_{j+1}| & \leqslant & \iint_{Q_j}(( \tilde{u}^m -k_j^m)_- \xi^{p_+})^{\alpha p} \ dx \ dt  \leqslant  \gamma \left( \int_{K_{r_j}^{\frac{\omega}{2}}} ( \tilde{u}^m - k_j^m)_-^2 \xi^{p_+} \ dx\right)^{\frac{\alpha p }{\alpha N+2} }\\
     & & \times  \left(\sum_{i=1}^N \iint_{Q_j} \left|(((\tilde{u}^{m}-k_j^m)_- \xi^{p_+})^{\alpha_i}_-)_{x_i}\right|^{p_i}  \ dx \ dt\right)^{\frac{\alpha N}{\alpha N+2} }  |A_j|^{1-\frac{\alpha N}{\alpha N+2} } \\
     \ \  
     & \leqslant & \gamma 2^{\gamma j} \left(\frac{(a\omega)^{\lambda_++m}}{r^{p_+}}\right)^{\frac{\alpha(p+N)}{\alpha N+2}}\left((a \omega)^{m-1} \sum_{i=1}^N a^{p_i-1-\lambda_+}\right)^{\frac{\alpha p}{\alpha N+2}}
     \left(\sum_{i=1}^N a^{p_i-p_+-\lambda_+}\right)^{\frac{\alpha N}{\alpha N+2}} \\
     & & |A_j|^{1+\frac{\alpha p}{\alpha N+2} }
\end{eqnarray*}
thereby 
\[Y_{j+1} \leq \gamma 2^{\gamma j} \left[\left(\frac{\omega}{2}\right)^{1-\lambda_+} a^{\frac{2(N+p}{p}} \right]^{- \frac{\alpha p}{\alpha N+2}}  b^{\frac{\alpha p}{\alpha N+2}} \ Y_j^{1+\frac{\alpha p}{\alpha N+2}} \ , \qquad \mbox{for} \qquad Y_j= \frac{|A_j|}{|Q_j|} \ . \]
By choosing $b= \gamma^{-\frac{\alpha N+2}{\alpha p}} 2^{-\gamma (\frac{\alpha N + 2}{\alpha p})^2} \  a^{\frac{2(N+p)}{p}} \left(\frac{\omega}{2}\right)^{1-\lambda_+} = \nu_1 \  a^{\frac{2(N+p)}{p}} \left(\frac{\omega}{2}\right)^{1-\lambda_+}$ we can then apply the fast geometric convergence lemma and obtain the lower estimate for $\bar{u}$
\[ \bar{u} \geq \frac{a \omega}{2} , \quad K_{r/2}^{\frac{\omega}{2}} \times [t_1, t_1+ b r^{p_+}] \]
and therefore, being $Q= K_{r/2}^{\frac{\omega}{2}} \times [s + \delta_1 (1-e^{-(2-p_+)\tau_1}) \theta\  r^{p_+}, s + (\delta_1  (1-e^{-(2-p_+)\tau_1}) + \nu_1 a^{\frac{2(N+p)}{p}} ) \theta \ r^{p_+} ] $
\[ \essosc_{Q} u \leq \left(1- \frac{a}{2}\right) \omega \ . \]
We now choose $\tau_1$ big enough so that that $e^{-(2-p_+)\tau_1} \leq \epsilon_1$ and take 
\[ \delta= \min \left\{ \delta_1, \delta_1  (1-e^{-(2-p_+)\tau_1}) + \nu_1 a^{\frac{2(N+p)}{p}}  \right\} \]
and finally take
\[s= (1-\delta) \theta r^{p_+} \ .\]
From this choices and the study of both alternatives \eqref{Alt1}-\eqref{Alt2}, we derive
\begin{equation}
    \essosc_{K_{\frac{r}{2}}^{\frac{\omega}{2}} \times  \left( \left(1-\delta + (1-e^{-(2-p_+)\tau_1})\delta_1\right) \theta r^{p_+}, \theta r^{p_+} \right]} u \leq \eta \ \omega \ , \qquad \eta= \min \left\{ 1-\frac{a}{2}, 1-\frac{1}{2C_1} \right\}
\end{equation}

\subsection{Reducing the oscillation within the slow diffusion range}

\noindent Let $r>0$ such that $K_{8r}^{\omega/2} \times (0, \theta r^{p_+}] \subset \Omega_T$, for $\theta= A \displaystyle{\left(\frac{\omega}{2}\right)^{1-\lambda_+}}$, $A>1$ to be chosen and $\lambda_+ > 1$.

\vspace{.5cm}

\noindent Let $s \in (0, \theta r^{p_+}]$ be a time level (to be determined) such that, either
\begin{equation}\label{alt1}
\left|K_{r}^{\omega/2} \cap [u(\cdot, s) > \omega/2] \right|\geq \frac{1}{2}\left|K_{r}^{\omega/2} \right|
\end{equation}
or
\begin{equation}\label{alt2}
\left|K_{r}^{\omega/2} \cap [u(\cdot, s) > \omega/2] \right| < \frac{1}{2}\left|K_{r}^{\omega/2} \right|\,.
\end{equation}

\noindent If \eqref{alt1} holds, then by applying the result on the expansion of positivity Theorem \ref{th4.1} one gets
\begin{equation}\label{redoscalt1}
u \geq \frac {\omega}{2C_1} \ , \quad \mbox{in} \ K^{\omega/(2C_{1})}_{2r} \times (s+B_1 \left(\frac{\omega}{2}\right)^{1-\lambda_+} r^{p_+} ,  s+ \tilde{B_1}\left(\frac{\omega}{2}\right)^{1-\lambda_+} r^{p_+}]
\end{equation}
where  
\[ B_1= \delta_1  e^{3\tau_{1}(\lambda_{+}-1)/4} < \tilde{B_1}= \delta_1  e^{\tau_{1}(\lambda_{+}-1)} \ , \qquad C_1= \dfrac{2^{1+s_1} e^{\tau_{1}}}{\epsilon_1} \ , \quad \tau_1= \left( \frac{2^{s_1}}{\epsilon_1}\right)^{\lambda_+-1}\]
depend only on the data. Remember that this holds for
\[ \lambda_+ >1 \qquad \mbox{and} \qquad \lambda_+-\lambda_- \leq \frac{p_-}{N \ln C_1}\]

\vspace{.2cm}

\noindent Consider that \eqref{alt1} does not hold, hence \eqref{alt2} is in force. Then, by a very close reasoning to what was presented for the fast diffusion range, considering $ 0 \leqslant\bar{u}:= \omega-u \leqslant \omega$ which, close to zero, satisfies the anisotropic differential equation \eqref{anisopLap}, the extra assumption \eqref{newrestriction} and the "new" cube as presented in \eqref{newcube}, one has
\[\left|\tilde{K}_{4r}^{\frac{\omega}{2C_2}} \cap \left[\bar{u}(\cdot, s) > \frac{\omega}{2}\right] \right| \geq \alpha_o \left|\tilde{K}_{4r}^{\frac{\omega}{2C_2}} \right| , \qquad \alpha_o= \frac{1}{4^{p_+\frac{N}{p} +\frac{3}{2}}}
\]
and also 
\begin{equation}\label{newinfo}
\left|\tilde{K}_{4r}^{\frac{\omega}{2C_2}} \cap \left[\bar{u}(\cdot, s) > e^{-\tau} \ \frac{\omega}{2}\right] \right| \geq \alpha_o \left|\tilde{K}_{4r}^{\frac{\omega}{2C_2}} \right| , \qquad \forall \tau \geq 0  \ . 
\end{equation}

\noindent We can then extend this information in time. Namely,

\begin{lemma}\label{lem4.1a}
Assume \eqref{newinfo} holds. There exist $\delta_2$, $\epsilon_2 \in (0,1)$, depending only on the data and on $\alpha_0$, such that for any $0< \tau \leqslant \tau_{2}\leqslant \ln C_{2}$ 
 \begin{equation}\label{eq2alta}
\left|\tilde{K}_{4r}^{\frac{\omega}{2C_2}} \cap \left[\bar{u}(\cdot, s) > \epsilon_2 \ e^{-\tau} \ \frac{\omega}{2}\right] \right| \geq \frac{\alpha_o}{2} \left|\tilde{K}_{4r}^{\frac{\omega}{2C_2}} \right|  \qquad s <t \leqslant s+ \delta_2 \left(\frac{\omega}{2}\right)^{1-\lambda_+} e^{\tau(p_+-2)} r^{p_+} \ .
\end{equation}
\end{lemma}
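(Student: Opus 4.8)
The plan is to run, essentially verbatim, the measure–propagation argument of Lemma~\ref{lem4.1}, now with $u$ replaced by $\bar u$ (which, near zero, is a local weak super-solution of the anisotropic $p$-Laplacian \eqref{anisopLap}), the cube $K^{k/C_*}_{4r}$ replaced by $\tilde K_{4r}^{\frac{\omega}{2C_2}}$, the level $e^{-\tau}k$ replaced by $\ell:=e^{-\tau}\,\frac{\omega}{2}$, and the time-length $\eta:=\delta_2\,\big(\frac{\omega}{2}\big)^{1-\lambda_+}e^{\tau(p_+-2)}r^{p_+}$. First I would fix $\sigma\in(0,1)$, introduce the slightly shrunken cube
\[
\tilde K^{\sigma}:=\Big\{x:\ |x_i|\leqslant (1-\sigma)\,\frac{(4r)^{p_+/p_i}}{(\omega/2)^{(\lambda_+-\lambda_i)/p_i}}\,C_2^{(p_+-p_i)/p_i},\ i=1,\dots,N\Big\}\subset\tilde K_{4r}^{\frac{\omega}{2C_2}},
\]
together with a time-independent piecewise smooth cutoff $\xi$ with $\xi\equiv1$ on $\tilde K^{\sigma}$, $\xi$ vanishing near $\partial\tilde K_{4r}^{\frac{\omega}{2C_2}}$, and $|\xi_{x_i}|\leqslant\sigma^{-1}(\omega/2)^{(\lambda_+-\lambda_i)/p_i}(4r)^{-p_+/p_i}C_2^{-(p_+-p_i)/p_i}$; then I would write the energy estimate \eqref{eq2.3}, adapted to $\bar u$ near zero, for the test function $(\bar u-\ell)_-\,\xi^{p_+}$ over $\tilde K_{4r}^{\frac{\omega}{2C_2}}\times(s,s+\eta]$, discarding the nonnegative diffusion term on the left.

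The next step is the bookkeeping on the three surviving terms, exactly as in Lemma~\ref{lem4.1}. On the set $\{\bar u(\cdot,t)<\epsilon_2\ell\}$ one has $(\bar u-\ell)_-^2\geqslant(1-\epsilon_2)^2\ell^2$, so the left-hand side dominates $(1-\epsilon_2)^2\ell^2\,|\tilde K^{\sigma}\cap\{\bar u(\cdot,t)<\epsilon_2\ell\}|$; since $\ell=e^{-\tau}\frac{\omega}{2}$, assumption \eqref{newinfo} bounds the initial-datum term by $\ell^2(1-\alpha_o)|\tilde K_{4r}^{\frac{\omega}{2C_2}}|$; and $(\bar u-\ell)_-^{p_i}\leqslant\ell^{p_i}$ together with the gradient bound on $\xi$ bounds the diffusion term. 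Dividing through by $\ell^2|\tilde K_{4r}^{\frac{\omega}{2C_2}}|$ and using $\lambda_i=m_i(p_i-1)$, which makes the $\omega$-exponent of $\omega^{(m_i-1)(p_i-1)}\ell^{p_i-2}(\omega/2)^{\lambda_+-\lambda_i}$ equal to $\lambda_+-1$, the precise choice of $\eta$ cancels every $\omega$- and $r$-power and leaves a diffusion contribution $\leqslant\frac{\gamma}{\sigma^{p_+}}\delta_2\sum_i e^{\tau(p_+-p_i)}C_2^{-(p_+-p_i)}\leqslant\frac{\gamma N}{\sigma^{p_+}}\delta_2$, the last inequality because $p_+-p_i\geqslant0$ and $0<\tau\leqslant\tau_2\leqslant\ln C_2$. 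Adding $|\tilde K_{4r}^{\frac{\omega}{2C_2}}\setminus\tilde K^{\sigma}|\leqslant N\sigma|\tilde K_{4r}^{\frac{\omega}{2C_2}}|$, this yields, for every $s\leqslant t\leqslant s+\eta$,
\[
\frac{|\tilde K_{4r}^{\frac{\omega}{2C_2}}\cap\{\bar u(\cdot,t)<\epsilon_2 e^{-\tau}\tfrac{\omega}{2}\}|}{|\tilde K_{4r}^{\frac{\omega}{2C_2}}|}\leqslant N\sigma+\frac{1-\alpha_o+\gamma N\sigma^{-p_+}\delta_2}{(1-\epsilon_2)^2}.
\]
Finally I would fix the parameters in the usual order: $\epsilon_2\in(0,1)$ so that $(1-\epsilon_2)^{-2}(1-\alpha_o)=1-\tfrac34\alpha_o$, then $\sigma$ with $N\sigma\leqslant\tfrac18\alpha_o$, then $\delta_2$ with $(1-\epsilon_2)^{-2}\gamma N\sigma^{-p_+}\delta_2\leqslant\tfrac18\alpha_o$ — all depending only on the data and $\alpha_o$ — so that the right-hand side is $\leqslant1-\tfrac12\alpha_o$, which is \eqref{eq2alta} after passing to complements.

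The hard part is precisely the power bookkeeping in the diffusion term: one has to verify that the slow-diffusion normalization $\eta=\delta_2(\frac{\omega}{2})^{1-\lambda_+}e^{\tau(p_+-2)}r^{p_+}$ is the one that collapses that term to a pure multiple of $\delta_2$, uniformly for $0<\tau\leqslant\tau_2$, with the residual $C_2$-powers absorbed by $\tau\leqslant\ln C_2$. Alongside this, restriction \eqref{newrestriction} must be used to guarantee the geometric containments $\tilde K_{4r}^{\frac{\omega}{2C_2}}\subset K_{8r}^{\omega/2}$ and $\tilde K_{4r}^{\frac{\omega}{2C_2}}\times(s,s+\eta]\subset\Omega_T$ that legitimize the energy estimate — these are the analogues here of the role \eqref{eq4.6} plays in Lemma~\ref{lem4.1}. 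Apart from that, nothing new is needed: the body of the argument is the De Giorgi measure-shrinking scheme already carried out for Lemma~\ref{lem4.1}, simply transcribed to the shifted variable $\bar u$ and the intrinsic cube $\tilde K_{4r}^{\frac{\omega}{2C_2}}$.
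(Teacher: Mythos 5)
Your proposal is correct and follows essentially the same route as the paper: test \eqref{anisopLap} with $(\bar u-e^{-\tau}\tfrac{\omega}{2})_-\xi^{p_+}$ over the cylinder $\tilde K_{4r}^{\frac{\omega}{2C_2}}\times(s,s+\eta]$, drop the diffusion term on the left, use \eqref{newinfo} for the initial term and the cutoff bound for the flux term, and observe that the choice $\eta=\delta_2(\tfrac{\omega}{2})^{1-\lambda_+}e^{\tau(p_+-2)}r^{p_+}$ makes all $\omega$- and $r$-powers cancel while $e^{\tau(p_+-p_i)}C_2^{-(p_+-p_i)}\leqslant1$ absorbs the residual $\tau$-dependence. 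The only deviations are cosmetic (you place the $N\sigma$ term outside the $(1-\epsilon_2)^{-2}$ factor and phrase the choice of $\epsilon_2$ slightly differently), so the argument matches the paper's proof.
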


\begin{proof}
By testing \eqref{anisopLap} with $\left(\bar{u} -e^{-\tau} \ \dfrac{\omega}{2} \right)_- \xi(x)^p_+$ over $ \tilde{K}_{4r}^{\frac{\omega}{2C_2}} \times (s, s+ \delta_2 \left(\frac{\omega}{2}\right)^{1-\lambda_+} e^{\tau(p_+-2)} r^{p_+}]$, where $\xi$ is a time independent cutoff function trapped between $\tilde{K}_{4r(1-\sigma)}^{\frac{\omega}{2C_2}} \subset \tilde{K}_{4r}^{\frac{\omega}{2C_2}}$, for $\sigma \in (0,1)$ to be chosen, and verifying $ |\xi_{x_i}|^{p_i} \leq \frac{\left(\frac{\omega}{2}\right)^{\lambda_+-\lambda_i}}{\sigma^{p_+} r^{p_+} C_2^{p_+-p_i}}$, we get 

 $\forall \ \  s <t \leqslant s+ \delta_2 \left(\frac{\omega}{2}\right)^{1-\lambda_+} e^{\tau(p_+-2)} r^{p_+}$,
\begin{eqnarray*} 
\left(e^{-\tau} \ \frac{\omega}{2}\right)^2 (1-\epsilon_2)^2 \left| \tilde{K}_{4r(1-\sigma)}^{\frac{\omega}{2C_2}} \cap \left[\bar{u} < \epsilon_2 \ e^{-\tau} \ \dfrac{\omega}{2} \right] \right| & \leqslant & \int_{\tilde{K}_{4r}^{\frac{\omega}{2C_2}}} \left( \bar{u}- e^{-\tau} \ \frac{\omega}{2}\right)_-^2 \xi^{p_+} \ dx   \\
& \leqslant &\left(e^{-\tau} \ \frac{\omega}{2}\right)^2 \left[ 1-\alpha_o + \gamma \frac{\delta_2}{\sigma^{p_+}} \sum_{i=1}^N \left(\frac{e^\tau}{C_2}\right)^{p_+-p_i}\right] \left|\tilde{K}_{4r}^{\frac{\omega}{2C_2}}\right| \\
& \leqslant &
\left(e^{-\tau} \ \frac{\omega}{2}\right)^2 \left[ 1-\alpha_o + \gamma \frac{\delta_2}{\sigma^{p_+}} \right] \left|\tilde{K}_{4r}^{\frac{\omega}{2C_2}}\right| \ . 
\end{eqnarray*}
Hence, for all $s <t \leqslant s+ \delta_2 \left(\frac{\omega}{2}\right)^{1-\lambda_+} e^{\tau(p_+-2)} r^{p_+}$,
\[ \left| \tilde{K}_{4r}^{\frac{\omega}{2C_2}} \cap \left[\bar{u} < \epsilon_2 \ e^{-\tau} \ \dfrac{\omega}{2} \right] \right| \leqslant \frac{1}{(1-\epsilon_2)^2 }
 \left( 1-\alpha_o + \gamma \frac{\delta_2}{\sigma^{p_+}} + N\sigma \right)  \left|\tilde{K}_{4r}^{\frac{\omega}{2C_2}}\right|  \ .
\] 
The proof is complete once we choose 
\[ \sigma = \frac{\alpha_o}{8N} \ , \quad \delta_2 = \frac{\alpha_o^{1+p_+}}{\gamma 2^{3+3p_+}N^{p_+} }  \ , \quad \epsilon_2 \leq 1- \sqrt{\frac{1-3\alpha_o/4}{1-\alpha_o/2}} \ .\]
\end{proof}

\noindent The measure theoretical information \eqref{eq2alta} now reads
\begin{equation}\label{eq2altav}
\left|\tilde{K}_{4}^{\frac{1}{C_2}} \cap \left[v(\cdot, \tau) > \epsilon_2\right] \right| \geq \frac{\alpha_o}{2} \left|\tilde{K}_{4}^{\frac{1}{C_2}} \right|  \qquad \forall \tau >0 \ , 
\end{equation}
where
\[v(z,\tau)= \frac{e^{\tau}}{\frac{\omega}{2}} \ \bar{u} (x,t) , \quad z_i= \frac{x_i}{r^{\frac{p_+}{p_i}} } \left(\frac{\omega}{2}\right)^{\frac{\lambda_+-\lambda_i-}{p_i}}  \quad t= s + \delta_2 \left(\frac{\omega}{2}\right)^{1-\lambda_+} \ e^{\tau(p_+-2)} \ r^{p_+} \]
 and $v$ satisfies
\begin{equation}\label{eqvnew} 
v_{\tau} \geqslant (p_+ -2)\ \delta_2 \ \sum\limits_{i=1}^N \left( e^{\tau(p_+-p_i)} 2^{(m_i-1)(p_i-1)} |v_{z_i}|^{p_i-2} v_{z_i}\right)_{z_i}    \ . 
\end{equation}
Under the assumptions that lead us to \eqref{eq2altav}, we have
\begin{lemma}\label{lem4.2n}
For any $\nu\in(0,1)$, there exists a positive number $s_2$, depending on the data, $\alpha_0$ and $\nu$, such that
\begin{equation}\label{dets2}
|\tilde{Q}\cap [v\leqslant \frac{\epsilon}{2^{s_2}}]|\leqslant \nu |\tilde{Q}|,\quad \tilde{Q}:=\tilde{K}^{1/C_{2}}_{4}\times\left[\frac{\tau_{2}}{2},\tau_{2}\right],\quad \tau_{2}:=\left(\frac{2^{s_2}}{\epsilon_2}\right)^{p_+-2},
\end{equation}
provided that
\begin{equation}\label{extra}
p_+ >2 \quad \mbox{and} \quad p_+-p_- \leqslant \frac{p_-}{\ln C_2} \ . 
\end{equation}
\end{lemma}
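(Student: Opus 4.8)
The plan is to reproduce, almost verbatim, the proof of Lemma~\ref{lem4.2} (the degenerate slow-diffusion estimate), now applied to the function $v$. By \eqref{eqvnew} and $p_+>2$, $v$ is a non-negative weak super-solution of an anisotropic $p_i$-Laplacian-type equation on $\tilde K_4^{1/C_2}\times[\tau_2/2,\tau_2]$ whose time-dependent coefficients satisfy $1\le e^{\tau(p_+-p_i)}\le e^{\tau_2(p_+-p_i)}$; here the exponent $p_+-2$ plays the role of $\lambda_+-1$ and the intrinsic scaling is governed by the $p_i$'s rather than the $\lambda_i$'s, which is exactly what the cube $\tilde K$ encodes. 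First I would fix the levels $k_j:=\epsilon_2 2^{-j}$, $j=0,\dots,s_2-1$, and set $A_j(\tau):=\tilde K_4^{1/C_2}\cap[v(\cdot,\tau)<k_j]$, $|A_j|:=\int_{\tau_2/2}^{\tau_2}|A_j(\tau)|\,d\tau$. Since \eqref{eq2altav} holds for every $\tau$ and every level $k_j\le\epsilon_2$, we get the density bound $|\tilde K_4^{1/C_2}\setminus A_j(\tau)|\ge\frac{\alpha_o}{2}|\tilde K_4^{1/C_2}|$ throughout $[\tau_2/2,\tau_2]$, which is precisely the input needed to run the discrete anisotropic isoperimetric inequality of \cite[Prop.~5.1, Chap.~10]{DB-PDEs} on each time slice.

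That step yields, for all $\tau\in[\tau_2/2,\tau_2]$,
\[
k_{j+1}|A_{j+1}(\tau)|\le\gamma(\alpha_0)\sum_{i=1}^N C_2^{\frac{p_+-p_i}{p_i}}\int_{A_j(\tau)\setminus A_{j+1}(\tau)}|v_{z_i}|\,dz,
\]
the factors $C_2^{(p_+-p_i)/p_i}$ being the edge-lengths of $\tilde K_4^{1/C_2}$; integrating in $\tau$, applying Hölder in space-time and using $|A_{j_*}|\le|A_{j+1}|$ gives
\[
k_{j+1}|A_{j_*}|\le\gamma(\alpha_0)\sum_{i=1}^N C_2^{\frac{p_+-p_i}{p_i}}\Big(\iint_{\tilde Q}|((v-k_j)_-)_{z_i}|^{p_i}\,dz\,d\tau\Big)^{\frac1{p_i}}|A_j\setminus A_{j+1}|^{1-\frac1{p_i}}.
\]
The energy estimate for $v$ — testing \eqref{eqvnew} with $(v-k_j)_-\zeta^{p_+}$ over the enlarged cylinder $\tilde K_8^{1/C_2}\times[\tau_2/4,\tau_2]$ with a cutoff $\zeta\equiv1$ on $\tilde Q$, $|\zeta_\tau|\le\gamma/\tau_2$, $|\zeta_{z_i}|^{p_i}\le\gamma\,C_2^{-(p_+-p_i)}$, then bounding $(v-k_j)_-\le k_j$ on $[v<k_j]$ and $e^{\tau(p_+-p_i)}\le e^{\tau_2(p_+-p_i)}$ — produces
\[
\sum_{i=1}^N\iint_{\tilde Q}|((v-k_j)_-)_{z_i}|^{p_i}\,dz\,d\tau\le\gamma(\delta_2)\Big(\frac{k_j^2}{\tau_2}+\sum_{i=1}^N\big(\tfrac{e^{\tau_2}}{C_2}\big)^{p_+-p_i}k_j^{p_i}\Big)|\tilde Q|\le\gamma(\delta_2)\,k_j^{p_+}|\tilde Q|,
\]
where the last passage uses $\tau_2=(2^{s_2}/\epsilon_2)^{p_+-2}=k_{s_2}^{2-p_+}\ge k_j^{2-p_+}$ (so $k_j^2/\tau_2\le k_j^{p_+}$) together with the balance $\tfrac{2^{s_2}}{\epsilon_2}e^{\tau_2}\le C_2$ — the exact analogue of the first condition in \eqref{eq4.11} — and $k_j\ge k_{s_2}$.

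Inserting this back, using $k_{j+1}=k_j/2$ and the bound $(C_2 k_j)^{(p_+-p_i)/p_i}\le e$ — which holds because \eqref{extra} (equivalently \eqref{newrestriction}) forces $(p_+-p_i)/p_i\le 1/\ln C_2$ while $C_2 k_j\le C_2$, and this is exactly where the $p_+-p_-$ smallness is consumed — I would obtain, after dividing by $|\tilde Q|$,
\[
\frac{|A_{j_*}|}{|\tilde Q|}\le\gamma(\alpha_0,\delta_2)\sum_{i=1}^N\Big(\frac{|A_j\setminus A_{j+1}|}{|\tilde Q|}\Big)^{1-\frac1{p_i}}\le\gamma(\alpha_0,\delta_2)\Big(\frac{|A_j\setminus A_{j+1}|}{|\tilde Q|}\Big)^{1-\frac1{p_-}},\qquad j=0,\dots,s_2-1.
\]
Raising to the power $p_-/(p_--1)$, summing over $j$ and using the disjointness of the sets $A_j\setminus A_{j+1}$ yields $|A_{j_*}|/|\tilde Q|\le\gamma(\alpha_0,\delta_2)\,s_2^{-(1-1/p_-)}$; choosing $s_2$ so that the right-hand side is $\le\nu$ gives \eqref{dets2}. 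The argument needs only $p_+>2$ and \eqref{extra}, plus the auxiliary provisos $\tfrac{2^{s_2}}{\epsilon_2}e^{\tau_2}\le C_2$ and $\tau_2\le\ln C_2$, which are secured afterwards when $C_2$ is finally fixed, exactly as $C_*$ was in Theorem~\ref{th4.1}.

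I expect the only real obstacle to be bookkeeping: one must carry the anisotropic edge-lengths $C_2^{(p_+-p_i)/p_i}$ consistently through both the isoperimetric inequality and the energy estimate and verify they are tamed — on one side by $p_+-p_-\le p_-/\ln C_2$ (making $(C_2 k_j)^{(p_+-p_i)/p_i}$ bounded) and on the other by the interplay between $\tau_2=(2^{s_2}/\epsilon_2)^{p_+-2}$ and the coefficient $e^{\tau_2(p_+-p_i)}$. There is no analytic difficulty beyond what already appears in Lemma~\ref{lem4.2}; in fact, since $v$ solves a pure anisotropic $p_i$-Laplacian-type inequality (no $w^{(m_i-1)(p_i-1)}$ weight, since $u\mapsto\bar u$ linearizes the diffusion near zero as in \eqref{anisopLap}), the estimates are strictly simpler than in the doubly nonlinear case.
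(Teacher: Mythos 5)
Your proof is correct and follows essentially the same route as the paper: levels $k_j=\epsilon_2 2^{-j}$, the discrete isoperimetric estimate on each time slice against the density bound \eqref{eq2altav}, time integration and H\"older, the energy inequality for $v$ on the enlarged cylinder to close the gradient term, and summation over $j$ to choose $s_2$. The only cosmetic difference is in the energy bookkeeping — you bound the total gradient term by $\gamma k_j^{p_+}|\tilde Q|$ at the price of the slightly stronger proviso $\tfrac{2^{s_2}}{\epsilon_2}e^{\tau_2}\le C_2$, whereas the paper keeps the $i$-th integral as $\gamma k_j^{p_i}|\tilde Q|$ and only needs $\tau_2\le\ln C_2$; both are secured once $C_2$ is finally fixed.
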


\begin{proof}
The proof is based on the isoperimetric result together with energy estimates derived for $v$.  Let $\dfrac{\tau_2}{2}\leqslant \tau \leqslant \tau_2$, $k_j= \dfrac{\epsilon_2}{2^j}$, for $j=0, \cdots, s_2-1$, and define
\[ A_j (\tau)= \tilde{K}_{4}^{1/C_2}\cap [v(\cdot,\tau)< k_j] \qquad \mbox{and} \qquad |A_j|= \int_{\frac{\tau_2}{2}}^{\tau_2} |A_{j}(\tau)|\ d\tau \ . \]
From the isoperimetric result and \eqref{eq2altav} one gets 
\[ \frac{k_j}{2} |A_{j+1}(\tau)| \leqslant \gamma(\alpha_0) C_2^{\frac{\lambda+-\lambda_-}{p_-}} \sum\limits_{i=1}^N \int\limits_{\tilde{K}_{4}^{1/C_2}\cap [k_{j+1}< v(\cdot, \tau)< k_j]}| v_{z_i}| \; dz \ , \]
after which one integrates in time over $\left[\dfrac{\tau_2}{2}, \tau_2\right] $ and applies H\"older's inequality 
\begin{eqnarray*}
\frac{k_j}{2}  |A_{j+1}|  & \leqslant & \gamma(\alpha_0) C_2^{\frac{\lambda+-\lambda_-}{p_-}} \left( \sum\limits_{i=1}^N  \iint_{\tilde{Q}} | ((v-k_j)_-)_{z_i}|^{p_i} \; dz\right)^{\frac{1}{p_i}} \ \left |A_j\setminus A_{j+1}\right|^{1-\frac{1}{p_i}}  \\
& \leqslant & \gamma(\alpha_0) \left( \frac{\gamma}{\delta_2} \ k_j^{p_i} \ |\tilde{Q}|\right)^{\frac{1}{p_i}}  \ \left |A_j\setminus A_{j+1}\right|^{1-\frac{1}{p_i}} \ . 
\end{eqnarray*}
The last inequality results from  energy estimates for $v$: in fact, when testing \eqref{eqvnew} with $(v-k_j)_- \xi^{p_+}$ over $Q:=\tilde{K}^{1/C_{2}}_{8}\times\left(0,\tau_{2}\right]$, for $0\leqslant \xi \leqslant 1 $ satisfying $x_i =1$ in $\tilde{Q}$, $|\xi_\tau| \leqslant \dfrac{\gamma}{\tau_2}$ and $|\xi_{z_i}|^p_i \leqslant \dfrac{\gamma }{C_2^{p_+-p_i}}$ one arrives at
\begin{eqnarray*}
 \sum\limits_{i=1}^N  \iint_{\tilde{Q}} | ((v-k_j)_-)_{z_i}|^{p_i} \; dz & \leqslant & \sum\limits_{i=1}^N  \iint_{Q} e^{\tau(p_+-p_i)} | ((v-k_j)_-)_{z_i}|^{p_i} \xi^{p_+} \; dz  \\
 & \leqslant & \frac{\gamma}{\delta_2}\iint_{Q} (v-k_j)_-^2 |\xi_{\tau}| \ dz \ d\tau + \gamma \iint_{Q} e^{\tau(p_+-p_i)}(v-k_j)_-^{p_i} |\xi_{z_i}|^{p_i} \ dz \ d\tau  \\
  & \leqslant & \frac{\gamma}{\delta_2} \  k_j^{p_i} \ |\tilde{Q}|
\end{eqnarray*}
recalling that $\tau_{2}:=\left(\dfrac{2^{s_2}}{\epsilon_2}\right)^{p_+-2}$, $\tau_2 \leq \ln C_2$ and assuming \eqref{extra}. 

\noindent Observe that from the previous inequality obtained for $|A_{j+1}|$ we get
\[ \frac{|A_{j+1}|}{|\tilde{Q}|} \leqslant \gamma(\alpha_o, \delta_2) \left( \frac{\left |A_j\setminus A_{j+1}\right|}{|\tilde{Q}|}\right)^{1-\frac{1}{p_i}} \leqslant \gamma(\alpha_o, \delta_2) \left( \frac{\left |A_j\setminus A_{j+1}\right|}{|\tilde{Q}|}\right)^{1-\frac{1}{p_-}}  , \quad j=0, \cdots, s_2-1\]
and so, by taking the power $\dfrac{p_-}{p_- -1}$ and summing up, we finally derive
\[ s_2 \left(\frac{|A_{s_2}|}{|\tilde{Q}|}\right)^{\frac{p_-}{p_- -1} } \leqslant 
\gamma(\alpha_o, \delta_2)  \sum_{j=0}^{s_2-1} \frac{\left |A_j\setminus A_{j+1}\right|}{|\tilde{Q}|} \leqslant \gamma(\alpha_o, \delta_2) \ . 
\]
We then choose $s_2$ such that $\frac{\gamma}{s_2^{\frac{p_--1}{p_-}}} \leq \nu$, which completes the proof.
\end{proof}

\noindent Our final aim is to fix $\nu \in (0,1)$, depending only on the data, and thereby determined $s_2$ (and consequently $\tau_2$). For that purpose we consider: sequences of cylinders $Q_j$,  $\tilde{K_2}^{\frac{1}{C_2}} \times (\frac{3}{4} \tau_2, \tau_2] \subset Q_j \subset \tilde{K_2}^{\frac{1}{C_2}} \times (\frac{1}{2} \tau_2, \tau_2]$ and of levels $k_j= \frac{\epsilon_2}{2^{s_2+1}} \left(1+\frac{1}{2^j}\right)$; cutoff functions $\xi$ vanishing on the parabolic boundary of $Q_j$ and verifying $|\xi_\tau| \leqslant \frac{\gamma}{\tau_2}$ and $|\xi_{z_i}|^{p_i} \leqslant \frac{\gamma}{C_2^{p_+-p_i}} $, and by testing \eqref{eqv} with $(v-k_j)_- \xi^p_+$ over $Q_j$, for $j=0,1,\dots$, we obtain
\[ \left( \frac{\epsilon_2}{2^{s_2}}\right)^{2-p_+} \sup_{ \frac{1}{2} \tau_2 \leqslant \tau \leqslant \tau_2  }\int_{\tilde{K}_{j}^{\frac{1}{C_2}}} \left(\left( v-k_j\right)_-\xi\right)^{p_+} \ dz + \sum_{i=1}^N \iint_{Q_{j}}  \left|((v-k_j)_-)_{z_i}\right|^{p_i} \xi^{p_+} \ dz \ d\tau \]
\begin{eqnarray}
& \leqslant & \sup_{ \frac{1}{2} \tau_2 \leqslant \tau \leqslant \tau_2} \int_{\tilde{K}_{J}^{\frac{1}{C_2}}} \left( v-k_j\right)_-^2 \xi^{p_+} \ dz + \sum_{i=1}^N \iint_{Q_{j}} e^{\tau(p_+-p_i)} \left|((v-k_j)_-)_{z_i}\right|^{p_i} \xi^{p_+} \ dz \ d\tau \nonumber \\
    &\leqslant & \gamma \ 2^{\gamma j}  \ \left(\frac{\epsilon_2}{2^{s_2}}\right)^{p_+} \left[1 + \sum_{i=1}^N \left( \frac{e^{\tau_2} \ 2^{s_2+1} }{\epsilon_2 \ C_2} \right)^{p_+-p_i} \right] |A_j| \ , \qquad |A_j|= |Q_j \cap [v<k_j]|\nonumber \\
    & \leqslant & \gamma \ 2^{\gamma j}  \ \left(\frac{\epsilon_2}{2^{s_2}}\right)^{p_+} |A_j|  \ , \label{newa}
\end{eqnarray}
provided
\[ \frac{e^{\tau_2} \ 2^{s_2 +1} }{\epsilon_2 } \leqslant  C_2 \ . \]
By applying the anisotropic embedding \eqref{PS}, for $\delta= p\frac{N+p_+}{N}$, one gets
\begin{eqnarray*}
 \left(\frac{\epsilon_2}{2^{s_2+j+2}}\right)^{p\frac{N+p_+}{N} }
|A_{j+1}| & \leqslant &  \iint_{Q_{j}} \left(\left( v-k_j\right)_-\xi\right)^{p_+} \ dz \ d\tau  \\
& \leqslant & \gamma \left( \sup_{ \frac{1}{2} \tau_2 \leqslant \tau \leqslant \tau_2  }\int_{\tilde{K}_{j}^{\frac{1}{C_2}}} \left(\left( v-k_j\right)_-\xi\right)^{p_+} \ dz
\right)^{\frac{p}{N}} \left( \sum_{i=1}^N \iint_{Q_{j}}  \left|((v-k_j)_-\xi)_{z_i}\right|^{p_i} \ dz \ d\tau  \right) \\
& \leqslant & \gamma \ 2^{\gamma j}  \ \left(\frac{\epsilon_2}{2^{s_2}}\right)^{p_+(\frac{p+N}{N})} \  \left(\frac{\epsilon_2}{2^{s_2}}\right)^{(p_+-2)\frac{p}{N}} |A_j|^{1+ \frac{p}{N}}
\end{eqnarray*}
and thereby, considering $Y_j= \frac{|A_j|}{|Q_j|}$,
\[ Y_{j+1} \leqslant \gamma \ 2^{\gamma j} \left(\frac{\epsilon_2}{2^{s_2}}\right)^{p_+-p} \ Y_j^{1+ \frac{p}{N}} \leqslant \gamma \ 2^{\gamma j} \ Y_j^{1+ \frac{p}{N}} \ . \]
We now choose $\nu = \gamma^{-\frac{N}{p}} \ 2^{-\gamma (\frac{N}{p})^2}$, depending only on the data, and conclude that
\[ v(z,\tau)\geqslant  \frac{\epsilon_2}{2^{s_2+1}} \ , \qquad (z,\tau)\in \tilde{K_2}^{\frac{1}{C_2}} \times \left(\frac{3}{4} \tau_2, \tau_2\right] \  \]
which implies that 
\begin{equation}\label{osc2alt}
u \leqslant \left(1- \frac{\epsilon_2}{2^{s_2+2}} e^{-\tau_2} \right) \omega \ , \qquad \mbox{in} \quad \tilde{K_{2r}}^{\frac{\omega}{2C_2}} \times (s+B_2 \left(\frac{\omega}{2}\right)^{1-\lambda_+} r^{p_+} ,  s+ \tilde{B_2}\left(\frac{\omega}{2}\right)^{1-\lambda_+} r^{p_+}]
\end{equation}
where
\[ B_2= \delta_2  e^{3\tau_{2}(p_+ - 2)/4} < \tilde{B_2}= \delta_2  e^{\tau_{2}(p_+ -2)} \ , \qquad C_2= \dfrac{2^{1+s_2}  \ e^{\tau_{2}}}{\epsilon_2} \ , \quad \tau_2= \left(\frac{2^{s_2}}{\epsilon_2}\right)^{p_+-2}\]
depend only on the data. 

\vspace{.2cm}

\noindent In what follows we combine the reduction of the oscillations of $u$ derived in the study of each one of the alternatives:
\begin{itemize}
    \item First alternative \\
    for $B_1= \delta_1  e^{3\tau_{1}(\lambda_+-1)/4} < \tilde{B_1}= \delta_1  e^{\tau_{1}(\lambda_+-1)}$, $C_1= \dfrac{2^{1+s_1}  \ e^{\tau_{1}}}{\epsilon_1}$, $\tau_1= \left(\frac{2^{s_1}}{\epsilon_1}\right)^{\lambda_+-1}$ and $\eta_1= 1- \frac{1}{2C_1}$

    \[ u \leq \eta_1 \ \omega , \quad \mbox{in} \quad K_{2r} \times (s+B_1 \left(\frac{\omega}{2}\right)^{1-\lambda_+} r^{p_+} ,  s+ \tilde{B_1}\left(\frac{\omega}{2}\right)^{1-\lambda_+} r^{p_+}]\]

    \item Second Alternative \\
    for $B_2= \delta_2  e^{3\tau_{2}(p_+ - 2)/4} < \tilde{B_2}= \delta_2  e^{\tau_{2}(p_+ -2)}$, $C_2= \dfrac{2^{1+s_2}  \ e^{\tau_{2}}}{\epsilon_2}$, $\tau_2= \left(\frac{2^{s_2}}{\epsilon_2}\right)^{p_+-2}$ and $\eta_2= 1- \frac{1}{2C_2}$
    
    \[ u \leq \eta_2 \  \omega , \quad \mbox{in} \quad K_{2r} \times (s+B_2 \left(\frac{\omega}{2}\right)^{1-\lambda_+} r^{p_+} ,  s+ \tilde{B_2}\left(\frac{\omega}{2}\right)^{1-\lambda_+} r^{p_+}]\]
    
\end{itemize}

\noindent We start by observing that one can take $\delta_2=\delta_1$, choosing $\delta_1$ smaller if needed, and $\epsilon_1=\epsilon_2$. 

\vspace{.2cm} 

\noindent Without loss of generality, assume that $p_+ <\lambda_+ +1$. Observe that, by taking $\epsilon_1$ even smaller and $s_2$ even bigger, if needed, 
\[ \frac{\tilde{B_1}}{B_2}  >1  \qquad \mbox{and} \qquad \frac{\tilde{B_2}}{\tilde{B_1}} >1 \ . \]
 With these choices and considering $ B=\max\{B_1, B_2\}$, we have that both alternatives are valid within the time interval 
\[ \left(s+B \left(\frac{\omega}{2}\right)^{1-\lambda_+} r^{p_+}, s+ \tilde{B_1}\left(\frac{\omega}{2}\right)^{1-\lambda_+} r^{p_+}\right] \]
and now we choose $s$ so that 
\[s + \tilde{B_1}\left(\frac{\omega}{2}\right)^{1-\lambda_+} r^{p_+}= A \left(\frac{\omega}{2}\right)^{1-\lambda_+} r^{p_+}, \qquad  A=\tilde{B_2} \ . \]

\noindent Therefore we obtain, for $\eta= \min\{ \eta_1, \eta_2\}$ and $Q= K_{2r} \times \left( (\tilde{B_2} - \tilde{B_1} + B) \left(\frac{\omega}{2}\right)^{1-\lambda_+} r^{p_+} ,  \tilde{B_2}\left(\frac{\omega}{2}\right)^{1-\lambda_+} r^{p_+}\right] $

\[ \essosc_Q  \ u  \leqslant \eta \ \omega \ .  \]

\vspace{.3cm}

\noindent As it is well known, results on the reduction of the oscillation will lead to the local H\"older continuity of $u$, once considered a proper sequence of nested and shrinking cylinders $Q_n$, all centered at $(x_o,t_o)$, and a decreasing sequence of positive numbers $\omega_n$ such that $ \displaystyle{\essosc_{Q_n}  \ u  \leqslant \ \omega_n}$, where  $Q_n$ tends to $(x_o,t_o)$ as $\omega_n$ goes to zero. 

\vspace{.1cm}

\noindent We intend to present a wider and more accurate picture of the local H\"older continuity for a more complete range of exponents in future work.

\section*{Acknowledgements} 

\noindent Simone Ciani is partially founded by GNAMPA (INdAM) and PNR italian fundings. Eurica Henriques was financed by Portuguese Funds through FCT - Funda\c c\~ao para a Ci\^encia e a Tecnologia - within the Projects UIDB/00013/2020 and UIDP/00013/2020 with the references DOI 10.54499/UIDB/00013/2020 (https://doi.org/ 10.54499/UIDB/00013/2020) and DOI 10.54499/UIDP/00013/2020 (https://doi.org/10.54499/ UIDP/00013/2020). Igor Skrypnik is partially supported by the grant "Mathematical modelling of complex systems and processes related to security’" of the National Academy of Sciences of Ukraine under the budget programme "Support for the development of priority areas of scientific research" for 2025- 2026, p/n 0125U000299. 

\vspace{.5cm}

\noindent{\bf Research Data Policy and Data Availability Statements.}  All data generated or analysed
during this study are included in this article.

\vspace{.5cm}

\vspace{1cm}

CONTACT INFORMATION

\vspace{.5cm}

\noindent \textbf{Simone Ciani}\\
Department of Mathematics of the University of Bologna, Piazza Porta San Donato, 5, 40126 Bologna, Italy \\
 simone.ciani3@unibo.it

\vspace{.5cm}

\noindent \textbf{Eurica Henriques}\\
Centro de Matem\'atica, Universidade do Minho - Polo CMAT-UTAD \\
Departamento de Matem\'atica - Universidade de Tr\'as-os-Montes e Alto Douro, 5000-801 Vila Real, Portugal\\
eurica@utad.pt

\vspace{.5cm}

\noindent \textbf{Mariia Savchenko}\\
Institute of Applied Mathematics and Mechanics, National Academy of Sciences of Ukraine, Gen. Batiouk Str. 19, 84116 Sloviansk, Ukraine\\
shan\_maria@ukr.net

\vspace{.5cm}

\noindent \textbf{Igor I.~Skrypnik}\\Institute of Applied Mathematics and Mechanics,
National Academy of Sciences of Ukraine, \\ 
 Batiouk Str. 19, 84116 Sloviansk, Ukraine\\
Vasyl' Stus Donetsk National University, 600-richcha Str. 21, 21021 Vinnytsia, Ukraine\\
ihor.skrypnik@gmail.com

\end{document}